
\documentclass[11pt,twoside]{amsart}
\usepackage{amsmath,amsfonts,amssymb,amscd,amsthm,amsbsy,epsf}
\usepackage{pdfsync}
\usepackage{setspace}
\usepackage{graphicx} 
\DeclareGraphicsExtensions{ .tiff, .jpg, .png}
\usepackage{epstopdf} 
\usepackage{fontenc}
\usepackage{esint}
\usepackage{bm}
\usepackage{color}
\usepackage{comment}
\usepackage{cite}
\usepackage{fp}
\textheight=8.5truein
\textwidth=6.25truein
\hoffset=-.5truein
\voffset=-.5truein
\footskip=24pt


\newtheorem{thm}{Theorem}[section]
\newtheorem{lem}[thm]{Lemma}
\newtheorem{ex}[thm]{Example}
\newtheorem{cor}[thm]{Corollary}
\newtheorem{prop}[thm]{Proposition}
\newtheorem{remark}[thm]{Remark}
\theoremstyle{definition}
\newtheorem{defn}[thm]{Definition}
\numberwithin{equation}{section}


\newcommand{\ep}{\epsilon}
\newcommand{\p}{\partial}
\newcommand{\Om}{\Omega}
\newcommand{\om}{\omega}

\newcommand{\D}{ \mbox{$\Delta$} }
\newcommand{\Ga}{\Gamma}
\newcommand{\var}{\varphi}
\newcommand{\Btau}{\mathbf{\tau}}

\newcommand{\Be}{\mathbf{e}}
\def\sfint{\mathop{\int\mkern-16mu \raise.15ex\hbox{$\scriptstyle\diagup$}}\nolimits}
\def\fint{\mathop{\int\mkern-19mu {\diagup}}\nolimits}
\def\ds{\displaystyle}


\newcommand{\R}{\mathbb{R}}
\newcommand{\N}{\mathbb{N}}
\def\real{{\mathbb R}}


\newcommand{\ueind}[1]{ \mbox{$u^{\epsilon}_{#1}$}}


\providecommand{\hspace}[1]{  \mbox{$H^{#1}$}}

\newcommand{\dist}{ \mbox{$\mathrm{dist}$} }

\renewcommand{\div}{ \mbox{$\mathrm{div}$} }
\newcommand{\tr}{\mbox{$\mathrm{Tr}$}}
\newcommand{\ppuccip}{\mbox{$\mathcal{M}^{+}$}}
\newcommand{\ppuccin}{\mbox{$\mathcal{M}^{-}$}}

\newcommand{\puccip}{\mbox{$\mathcal{F}^{+}$}}
\newcommand{\puccin}{\mbox{$\mathcal{F}^{-}$}}
\newcommand{\pp}{\mbox{$\mathcal{M}^{+}$}}
\newcommand{\pn}{\mbox{$\mathcal{M}^{-}$}}
\newcommand{\Fp}{\mbox{$\mathcal{F}^{+}$}}
\newcommand{\Fn}{\mbox{$\mathcal{F}^{-}$}}


\def\C{{\mathcal C}}
\newcommand{\holder}{H\mbox{$\ddot{\mathrm{o}}$}lder  }
\providecommand{\diff}[1]{  \mbox{$\mathrm{d#1}$}  }


\providecommand{\abs}[1]{\left\vert#1\right\vert}
\providecommand{\norm}[2]{\left\Vert#1\right\Vert_{#2}}

\begin{document}
\title[Regularity of interfaces for a Pucci type  segregation problem]{Regularity of interfaces for a Pucci type  segregation problem}
\author{L.  Caffarelli} 
\address{The University of Texas at Austin\\
Department of Mathematics -- RLM 8.100\\
2515 Speedway -- Stop C1200\\
Austin, TX~78712-1202, US}
\email{caffarel@math.utexas.edu}
\author{S. Patrizi}
\address{The University of Texas at Austin\\
Department of Mathematics -- RLM 8.100\\
2515 Speedway -- Stop C1200\\
Austin, TX~78712-1202, US}
\email{spatrizi@math.utexas.edu}
\author{V. Quitalo}
\address{CMUC, Department of Mathematics, University of Coimbra, 3001-501 Coimbra, Portugal}
\email{vquitalo@math.utexas.edu}
\author{M. Torres}
\address{Purdue University\\
Department of Mathematics\\150 N. University Street\\
West Lafayette\\ IN 47907-2067, US}
\email{torresm@math.purdue.edu}
\keywords {Fully nonlinear elliptic systems, Pucci operators, Regularity for viscosity solutions, Segregation of populations, Regularity of the free boundary}
\subjclass[2010]{Primary: 35J60; Secondary: 35R35, 35B65, 35Q92}
\thanks{This work was partially supported by the Centre for Mathematics of the University of Coimbra -- UID/MAT/00324/2013, funded by the Portuguese Government through FCT/MEC and co-funded by the European Regional Development Fund through the Partnership Agreement PT2020.
}

\begin{abstract}
We show the existence of a Lipschitz viscosity solution $u$ in $\Omega$ to a system of fully nonlinear equations involving Pucci-type operators. We study the regularity of the interface $\partial \{ u> 0 \}\cap\Om$ and we show that the viscosity inequalities of the system imply, in the weak sense, the free boundary condition $u^{+}_{\nu_{+}} = u^{-}_{\nu_{-}}$, and hence $u$ is a solution to a two-phase free boundary problem. We show that we can apply the classical method of sup-convolutions developed by the first author in 
\cite{caffarelli_harnack_1987,caffarelli_harnack_1989}, and generalized by Wang \cite{wang_regularity_2000,wang_regularity_2002} and Feldman  \cite{Fel} to fully nonlinear operators, to conclude that the regular points in $\partial \{ u> 0 \}\cap\Om$  form an open set of class $C^{1,\alpha}$. A novelty in our problem is that we have different operators, $\puccip$ and $\puccin$,  on each side of the free boundary. In the particular case when these operators are the Pucci's extremal operators $\ppuccip$ and $\ppuccin$, our results provide an alternative approach to obtain the stationary limit
 of a segregation model of populations with nonlinear diffusion in \cite{quitalo_free_2013}. 
\end{abstract}

\maketitle
\pagestyle{plain}
\baselineskip=24pt

\singlespace

\section{Introduction}
The work in the present paper is motivated by the study of the regularity of the free boundary for a limit problem obtained  from a segregation model with nonlinear diffusion studied 
by the third author in  \cite{quitalo_free_2013}. In  the case of two populations, the model takes the form
\begin{equation}
\label{pucciproblem}
\left\{
\begin{split}
  \ppuccin  (u^\ep_1) = \frac{1}{\epsilon}  u_1^\ep u^\ep_2& \quad  \mbox{in} \: \Omega   \\
      \ppuccin  (u^\ep_2) = \frac{1}{\epsilon}  u^\ep_1 u^\ep_2 & \quad  \mbox{in} \: \Omega \\
   u^\ep_i  = f_{i}   \quad i=1,2, &\quad  \mbox{on} \:  \partial \Omega,
  \end{split}
  \right.
 \end{equation}
 where  $\Om$ is a bounded Lipschitz domain of $\R^n$,  $f_{1}$ and $f_2$ are  non-negative, non-zero, \holder  continuous function defined on $ \partial \Omega$, 
  with disjoint supports, 
$\ppuccin $   denotes the  negative Pucci's  extremal operator
that  will be described later.   The non-negative solution $\ueind{i}$, $i=1,2$ of \eqref{pucciproblem}  can be seen as  a density of the population $i,$ and  the parameter $ \frac{1}{\epsilon}>0$ characterizes the level of competition between species. 
In \cite{quitalo_free_2013} it is proven that along a subsequence, $\ueind{1}$  and $\ueind{2}$  converge uniformly in $\Om$, as $\ep\to0^+$, respectively to $u_1$ and $u_2$, 
non-negative locally Lipschitz functions, solutions of the following free boundary problem, for $i,j=1,2$,  
\begin{equation}\label{limitproblempucci}
\begin{cases}
\ppuccin (u_i)= 0 &\text{in }\{u_i>0\}\\
\ppuccin (u_i-u_j)\leq 0&\text{in }\Om\\
u_1u_2=0&\text{in }\Om\\
u_i=f_i&\text{on }\partial\Om.\\
\end{cases}
\end{equation}
Let $u:=u_1-u_2$, then  $u_1=u^+$,  $u_2=u^-$, where $u^+,\,u^-$  are respectively the positive and negative part of $u$, and system \eqref{limitproblempucci} can be rewritten in terms of $u$ as follows
\begin{equation}\label{limitproblempuccibis}
\begin{cases}
\ppuccin (u)= 0 &\text{in }\{u>0\}\\
\ppuccip (u)= 0 &\text{in }\{u<0\}\\
\ppuccin (u)\leq 0&\text{in }\Om\\
\ppuccip (u)\geq 0&\text{in }\Om\\
u=f&\text{on }\partial\Om,\\
\end{cases}
\end{equation}
where $f=f_1-f_2$ and $\ppuccip(u)=-\ppuccin(-u)$ is the positive Pucci's operator. 

In the present paper  we study  problems likewise \eqref{limitproblempuccibis} in a more general setting. Precisely, we consider the  following free boundary problem,
\begin{equation}\label{limitproblemk=2}
\begin{cases}
\puccin (u)= 0 &\text{in }\{u>0\}\\
\puccip (u)= 0 &\text{in }\{u<0\} \\
\puccin (u)\leq 0&\text{in }\Om\\
\puccip(u) \geq 0&\text{in }\Om\\
u=f&\text{on }\partial\Om\\
\end{cases}
\end{equation}
in a bounded smooth domain $\Om\subset\real^n$, where $f$ is a Lipschitz function defined on $\partial\Om$ and  $\puccin $  and $\puccip$ 
are  uniformly elliptic operators  belonging to a class of extremal operators that includes the Pucci's operators $\ppuccin $  and $\ppuccip$. 
Therefore the limit problem \eqref{limitproblempuccibis} can be seen as a particular case of \eqref{limitproblemk=2}.

We first prove the existence of a  Lipschitz solution $u$ of \eqref{limitproblemk=2}. Then, we study the regularity of the free boundary set
$$\Gamma:=\partial\{u>0\}\cap \Om.$$
Denote $u_1=u^+$ and $u_2=u^-$, and  let $\nu_i$ be the interior unit normal vector to $\{u_i>0\}$. 
At this stage we have no information about the regularity of the free boundary  $\Gamma$ and the vectors $\nu_i$ may not be defined  at every point of $\Gamma$.
However, we can prove 
 that any Lipschitz solution of \eqref{limitproblemk=2} satisfies in a weak sense (viscosity sense) the following free boundary condition
\begin{equation*}
\frac{\partial u_1}{\partial \nu_1}=\frac{\partial u_2}{\partial \nu_2}\quad \text{on }\Gamma,\end{equation*}
that is, the normal derivative of $u$ is continuous across the free boundary. 

This will allow us to  apply the regularity  theory developed by  Caffarelli  in the  papers 
\cite{caffarelli_harnack_1986, caffarelli_harnack_1987, caffarelli_harnack_1989} for free boundary problems associated to linear operators and then extended  by  Wang \cite{wang_regularity_2000,wang_regularity_2002} to the case of fully nonlinear uniformly elliptic concave operators, to show that  the subset of regular points of the free boundary
is relatively  open in  $\Gamma$ and locally of class 
$C^{1,\alpha}$, $0<\alpha\le1$. 

Let us describe more in details the results of the present paper and the strategies followed.
Let $x_0\in\Gamma$ and assume that $\Gamma$ is smooth around $x_0$, 
then since $u$ is a  viscosity solution of  the first and second equation in \eqref{limitproblemk=2}, by the Hopf Lemma we  have
$$0<\frac{\partial u_1}{\partial \nu_1}(x_0),\,\frac{\partial u_2}{\partial \nu_2}(x_0)<+\infty,$$
that is $u$ has linear growth away from the free boundary around $x_0$. Thus, we expect that at points  where the solution $u$ "behaves well", in fact  both $u_1$ and $u_2$ have locally linear growth away from the free boundary.
The linear behavior of $u_i$ at a point $x_0$ of the free boundary without regularity assumptions on $\Gamma$ can be defined  as follows: there exists $\tilde{r}=\tilde{r}(x_0)>0$ 
and $M=M(x_0)>0$ 
such that for any $0<r<\tilde r$, 
\begin{equation}\label{introlinearbeha}\sup_{B_r(x_0)} u_i\ge M r.\end{equation}
A barrier argument shows that  the function $u_i$ satisfies \eqref{introlinearbeha}  at  points of $\Gamma$ where there is a tangent ball to $\Gamma$ contained in its support, as we will see. 
Points with this property are dense in $\Gamma$. 
Thus, we define   $x_0\in\Gamma$  to be regular if  
\eqref{introlinearbeha} holds true for  at least one among  $u_1$ and 
$u_2$, see Definition \ref{firstdefinition}.
Then by using that $u$ satisfies in the viscosity sense 
\begin{equation}\label{introlinearbehapucci}\puccin (u)\leq 0\leq \puccip (u)\quad\text{in }\Om\end{equation}
we can actually prove that both $u_1$ and  $ u_2$ have  linear behavior at any   regular point, as expected. The viscosity inequalities
\eqref{introlinearbehapucci} have to be understood as a sort of free boundary conditions since they are satisfied in the whole $\Om$ and thus across the free boundary too.

Now, solutions of \eqref{limitproblemk=2} have the properties that the positive and negative parts are subharmonic in $\Om$. Therefore, we can perform a blow up analysis 
by using   the monotonicity formula.  In particular, we can show that if $u$ is a Lipschitz solution of  \eqref{limitproblemk=2}, then 
 around any regular point  the free boundary is flat, meaning that it can be trapped in a narrow neighborhood in between two Lipschitz graphs. If  in addition there is a tangent ball from one side at $x_0\in\Gamma$, meaning that the ball is contained either in the positivity set of $u$ or in its negativity set, then we prove that $u$ has the asymptotic behavior
 \begin{equation}\label{asymptbehavintro}u(x)=\alpha<x-x_0,\nu>^+-\beta<x-x_0,\nu>^-+o(|x-x_0|),\end{equation}
 where $\alpha,\,\beta>0$ and $\nu$ is the normal vector to the tangent ball at $x_0$ pointing inward $\{u>0\}$.
 The viscosity inequalities \eqref{introlinearbehapucci} then imply $\alpha=\beta$, that is $u$ is  asymptotically a plane at $x_0$.
 This shows  that any Lipschitz viscosity solution of \eqref{limitproblemk=2} is also a viscosity solution to the following two phase free boundary problem
 \begin{equation}
\label{freebddproblemintro}
\begin{cases}
\puccin (u) =0\quad\text{in }\{u>0\}\\
\puccip (u) = 0 \quad\text{in } \{u<0\}\\
\frac{\partial u_1}{\partial \nu_1}=\frac{\partial u_2}{\partial \nu_2} \quad\text{on } \partial \{u>0\}\cap\Om.
 \end{cases}
 \end{equation}
We refer to \cite{caffarelli_geometric_2005} for the theory of viscosity solutions to free boundary problems. 
The regularity of the free boundary for problems of type  \eqref{freebddproblemintro} with same concave fully nonlinear operator  in both the positivity and the negativity set  of $u$
and  with more general free boundary conditions, has been investigated, as already mentioned,  in \cite{wang_regularity_2000,wang_regularity_2002}. More general  operators have been considered in \cite{MR2511639, MR3310271, Fel}.

Even though in  \eqref{freebddproblemintro}  there are  different operators on each side of the free boundary, we can still apply the results 
of \cite{wang_regularity_2000,wang_regularity_2002} and 
  prove that for any  solution $u$ of \eqref{freebddproblemintro} the following holds:
  if the free boundary is flat around a point $x_0\in\Gamma$, then in a neighborhood of $x_0$ it is  a $C^{1,\alpha}$ surface. Going back to the  original free boundary problem \eqref{limitproblemk=2}, this result implies that the set of regular points is an open subset of $\Gamma$ locally of class $C^{1,\alpha}$. In particular, $u$ has 
  the asymptotic behavior \eqref{asymptbehavintro} with $\alpha=\beta$ at any regular point.
  
To conclude, let us mention that we provide a simpler proof than in \cite{quitalo_free_2013} of the existence of a Lipschitz solution of \eqref{limitproblemk=2} that does not involve a segregation problem. Moreover as a byproduct of our results, we prove existence of a Lipschitz solution of \eqref{freebddproblemintro}. Existence of solutions to free boundary problems is in general a main issue. 
  For \eqref{freebddproblemintro}, with $\Fp$ replaced by $\Fn$ it has been proven in \cite{MR2005161}.  We believe that our existence proofs could be generalized  to a larger class of fully nonlinear operators. 
  
\subsection{Organization of the paper. }  
The operators $\Fn$ and $\Fp$  are defined and their properties described in Section \ref{Fnsec}. Some examples are provided too.
Our main results, Theorems \ref{existencethm}, \ref{limitpbthm} and \ref{C1alphaGammathm},  are contained in Section  \ref{mainresultssec}. In Section \ref{monotonicity} we recall the monotonicity formula and some related results.
Existence of a Lipschitz solution of the free boundary problem \eqref{freebddproblemintro}, i.e. Theorem \ref{existencethm}, is proven in Section \ref{Existencesec}. 
In Section \ref{regulapointssec} we introduce  the notion of regular points and we prove the non degeneracy of both $u_1$ and $u_2$ at regular points. 
Section \ref{Freeboundarycondisec}  is devoted to the proof of Theorem \ref{limitpbthm}. In Section \ref{C1alphasec} we prove that for the solution of \eqref{freebddproblemintro} flat free boundaries are Lipschitz and, as a corollary, Theorem \ref{C1alphaGammathm}.  Finally, some properties of the fundamental solution for the operator $\Fn$ are proven in the Appendix.

 \section{The operators $\Fn$ and $\Fp$. Notation}\label{Fnsec} 
 We will start by defining the two general fully nonlinear uniformly elliptic operators
 $\Fn$ and $\Fp$.  Let $\mathcal{S}_n$ be  the  set of symmetric $n \times n$ real matrices.
 Given $0<\lambda\leq 1<\Lambda$, let us denote by $\mathcal{A}_{\lambda, \Lambda}$ the set of matrices   of $\mathcal{S}_n$ with eigenvalues   in $[\lambda, \Lambda]$; i.e,
\begin{equation*}
\mathcal{A}_{\lambda, \Lambda}:=\{A\in  \mathcal{S}_n\,|\,\lambda I_n\leq A\leq\Lambda I_n\},
\end{equation*}
where $I_n$ is the identity matrix.
 Let $\mathcal{A}_{\lambda, \Lambda}^1$ and $\mathcal{A}_{\lambda, \Lambda}^2$ be two not empty subsets of $\mathcal{A}_{\lambda, \Lambda}$ with the property that 
 \begin{equation}\label{Fptope0}\text{ if }A \in \mathcal{A}_{\lambda, \Lambda}^{i},\, i=1,2,\text{ and }O\in\mathcal{O}(n),\,\text{ then } OA O^t\in \mathcal{A}_{\lambda, \Lambda}^i,
 \end{equation}
 where we denote by $\mathcal{O}(n)$ the set of $n\times n$ orthogonal matrices.
 Moreover, we assume that the identity matrix belongs to both sets,  
 \begin{equation}\label{identityinters}I_n \in \mathcal{A}_{\lambda, \Lambda}^1\cap \mathcal{A}_{\lambda, \Lambda}^2.\end{equation}
 Let $\puccip$ and $\puccin$  be the following operators defined over matrices $M$ in $\mathcal{S}_n$,
\begin{equation}
\label{op n}
\puccin(M):=\inf_{A \in \mathcal{A}_{\lambda, \Lambda}^1} \tr(AM )
\end{equation}
and
\begin{equation}
\label{op p}
\puccip(M):=\sup_{A \in \mathcal{A}_{\lambda, \Lambda}^2} \tr(AM).
\end{equation}
We remark that when $\mathcal{A}_{\lambda, \Lambda}^1=\mathcal{A}_{\lambda, \Lambda}^2=\mathcal{A}_{\lambda, \Lambda}$, then $\Fn=\pn$ and $\Fp=\pp$,  where $\pn$ and $\pp$ are the Pucci's extremal operators defined, for  $M\in\mathcal{S}_n$, as follows
$$\pn(M)=\inf_{A \in \mathcal{A}_{\lambda, \Lambda}} \tr(AM )=\lambda\sum_{e_i>0}e_i+\Lambda\sum_{e_i<0}e_i$$
and 
$$\pp(M)=\sup_{A \in \mathcal{A}_{\lambda, \Lambda} }\tr(AM )=\Lambda\sum_{e_i>0}e_i+\lambda\sum_{e_i<0}e_i,$$
where $e_i$, $i=1,\ldots,n$ are the eigenvalues of the matrix $M$.

\begin{prop}\label{Fnproperties}
 $\puccin$ and $\puccip$ satisfy, for $M,N\in \mathcal{S}_n$
 \begin{itemize}
\item [(a)] $\mathcal{F}^\pm(tM)=t\mathcal{F}^\pm(M)$ for any $t\geq 0$;
\item  [(b)]$\Fp(M+N)\leq \Fp(M)+\Fp(N)$ and hence $\Fp$ is convex;
\item  [(c)]$\Fn(M+N)\geq \Fn(M)+\Fn(N)$ and hence $\Fn$ is concave;
\item  [(d)] For any $M\in \mathcal{S}_n$,
$$\pn(M)\le\Fn(M)\leq \text{tr}(M)\leq \Fp(M)\leq \pp(M);$$
\item  [(e)] (Uniformly Ellipticity) $\pn(N)\leq\mathcal{F}^\pm(M+N)-\mathcal{F}^\pm(M) \leq  \ppuccip (N)$;
\end{itemize}
\end{prop}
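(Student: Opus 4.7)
The proof is a collection of routine manipulations of sup/inf over the classes $\mathcal{A}^i_{\lambda,\Lambda}$, together with the two structural hypotheses on these classes, namely the inclusion $\mathcal{A}^i_{\lambda,\Lambda}\subset \mathcal{A}_{\lambda,\Lambda}$ and $I_n\in \mathcal{A}^1_{\lambda,\Lambda}\cap \mathcal{A}^2_{\lambda,\Lambda}$ from \eqref{identityinters}. The orthogonal invariance \eqref{Fptope0} is not needed here. My plan is to treat the items in the order (a), (b), (c), (d), (e).

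For (a), I would just note that for $t\ge 0$ one has $\tr(A(tM))=t\,\tr(AM)$, and since $t$ is a nonnegative constant it factors through both the supremum and the infimum, giving $\mathcal{F}^\pm(tM)=t\,\mathcal{F}^\pm(M)$. For (b), from $\tr(A(M+N))=\tr(AM)+\tr(AN)$ and the subadditivity of $\sup$ I would conclude $\Fp(M+N)\le \Fp(M)+\Fp(N)$; combined with (a) this gives convexity via $\Fp\bigl(tM+(1-t)N\bigr)\le t\,\Fp(M)+(1-t)\Fp(N)$ for $t\in[0,1]$. Item (c) is the symmetric statement for $\Fn$, using that $\inf$ is superadditive: $\inf_A\bigl[\tr(AM)+\tr(AN)\bigr]\ge \inf_A\tr(AM)+\inf_A\tr(AN)$.

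For (d) I would argue the four inequalities in turn. Since $\mathcal{A}^1_{\lambda,\Lambda}\subset \mathcal{A}_{\lambda,\Lambda}$, the infimum defining $\pn(M)$ is taken over a larger class and so $\pn(M)\le \Fn(M)$; symmetrically $\Fp(M)\le\pp(M)$. The middle inequalities use the assumption $I_n\in \mathcal{A}^1_{\lambda,\Lambda}\cap\mathcal{A}^2_{\lambda,\Lambda}$: taking $A=I_n$ as a particular admissible choice one gets $\Fn(M)\le \tr(I_n M)=\tr(M)$ from the infimum side, and $\tr(M)\le \Fp(M)$ from the supremum side.

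The only item requiring a touch more care is (e), so that is where I would be most deliberate. For the upper bound on $\Fn(M+N)-\Fn(M)$, I would (using $\Fn$ as an infimum of linear functionals in $M$, which is automatically $\ge$ any evaluation and $\le$ any chosen minimizing sequence) take an almost-minimizer $A^\ast_\delta\in \mathcal{A}^1_{\lambda,\Lambda}$ for $\Fn(M)$, bound
\begin{equation*}
\Fn(M+N)\le \tr(A^\ast_\delta(M+N))=\tr(A^\ast_\delta M)+\tr(A^\ast_\delta N)\le \Fn(M)+\delta+\pp(N),
\end{equation*}
and let $\delta\to 0$ to obtain $\Fn(M+N)-\Fn(M)\le \pp(N)$; here I use $A^\ast_\delta\in \mathcal{A}_{\lambda,\Lambda}$ to pass from $\tr(A^\ast_\delta N)$ to $\pp(N)$. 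The matching lower bound $\Fn(M+N)-\Fn(M)\ge \pn(N)$ follows from the superadditivity proved in (c) together with $\Fn(N)\ge \pn(N)$ from (d). The argument for $\Fp$ is the mirror image: one uses an almost-maximizer for $\Fp(M)$ and the bounds $\pn(N)\le \tr(A^\ast_\delta N)\le \pp(N)$ to deduce both inequalities. No step here should present a genuine obstacle; the only small subtlety is being careful that $A^\ast_\delta$ lies in $\mathcal{A}_{\lambda,\Lambda}$ so that the extremal Pucci bounds may be invoked, which is exactly the inclusion $\mathcal{A}^i_{\lambda,\Lambda}\subset \mathcal{A}_{\lambda,\Lambda}$.
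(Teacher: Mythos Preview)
Your proof is correct and follows essentially the same approach as the paper: (a)--(d) are handled identically, and for (e) the paper uses the sub/superadditivity from (b)/(c) together with (d) for one inequality and a direct sup/inf manipulation (your almost-minimizer argument, without the $\delta$) for the other. The only cosmetic difference is that the paper spells out (e) for $\Fp$ and says ``similarly for $\Fn$'', while you do the reverse.
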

\begin{proof}
Properties (a)-(c) are clear from the definitions \eqref{op n} and \eqref{op p} and the properties of the sup and inf functions. 

Since $ \mathcal{A}_{\lambda, \Lambda}^1,\,\mathcal{A}_{\lambda, \Lambda}^2\subset  \mathcal{A}_{\lambda, \Lambda}$, we have that  
$\pn(M)\le\Fn(M)$ and $\Fp(M)\leq \pp(M)$ for any $M\in\mathcal{S}_n$. Moreover, \eqref{identityinters} implies that 
$$\Fn(M)\leq  \text{tr}(M)\leq \Fp(M).$$ This proves (d).

By (b) and the last inequality in (d), we have that
$$ \Fp(M+N)-\Fp(M)\le \Fp(N)\leq  \pp(N).$$
On the other hand, by the properties of the sup function,
$$ \Fp(M+N)\ge \Fp(M)+\inf_{A \in \mathcal{A}^2_{\lambda, \Lambda} }\tr(AN )\ge \Fp(M)+\pn(N).$$
This concludes the proof of (e) for $\Fp$. Similarly, one can prove (e) for $\Fn$.
\end{proof}

Let $D$ be a  domain of $\R^n$. With a slight abuse of notation, we define the differential operators, for $u\in C^2(D)$ and $x\in D$,
$$\Fn(u)(x):=\Fn(D^2u(x)) $$ and 
$$\Fp(u)(x):=\Fp(D^2u(x)),$$ 
where $D^2u$ is the Hessian matrix of $u$.
By Proposition \ref{Fnproperties}, the differential operators $\Fn$ and $\Fp$ are 1-homogeneous, uniformly elliptic, $\Fn$ is concave and $\Fp$ is convex.
Moreover, by (d), for any $u\in C^2(D)$,
\begin{equation}\label{puccifoprel}\pn(u)\leq\Fn(u)\leq \Delta u\leq \Fp(u)\leq \pp(u),\end{equation} where again here we denote  $\pn(u)(x):=\pn(D^2 u(x))$,  $\pp(u)(x):=\pp(D^2 u(x))$ and by 
$\Delta u$ the Laplacian of $u$. Furthermore, the operators $\Fn$ and $\Fp$  are invariant under rotations, as stated in the following proposition.
\begin{prop}\label{orthogonalF}
Let $O$ be an orthogonal matrix. Let $u$ be a $C^2$-function and let $v(x)=u(Ox)$. Then,
$$\mathcal{F}^\pm(v)(x)=\mathcal{F}^\pm(u)(Ox).$$
\end{prop}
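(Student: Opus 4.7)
The plan is to reduce the statement to the algebraic identity $\mathcal{F}^\pm(O^t M O) = \mathcal{F}^\pm(M)$ for every $M\in\mathcal{S}_n$ and every orthogonal $O$, and then apply it with $M = D^2 u(Ox)$.

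First I would compute the Hessian of $v$. By the chain rule, writing $y = Ox$, we have $\partial_i v(x) = \sum_k O_{ki}\,\partial_k u(y)$, and differentiating again gives $\partial_{ij}v(x) = \sum_{k,\ell} O_{ki} O_{\ell j}\,\partial_{k\ell}u(y)$. In matrix form this is $D^2 v(x) = O^t D^2u(Ox)\, O$. Hence
\begin{equation*}
\mathcal{F}^\pm(v)(x) = \mathcal{F}^\pm\bigl(O^t D^2u(Ox)\, O\bigr),
\end{equation*}
so it suffices to show that for every orthogonal $O$ and every $M\in\mathcal{S}_n$,
\begin{equation*}
\mathcal{F}^\pm(O^t M O) = \mathcal{F}^\pm(M).
\end{equation*}

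For this, I would use the cyclic property of the trace together with the invariance hypothesis \eqref{Fptope0}. For any $A\in\mathcal{S}_n$,
\begin{equation*}
\tr(A\, O^t M O) = \tr(O A O^t\, M).
\end{equation*}
The map $\Phi_O\colon A \mapsto O A O^t$ is a bijection of $\mathcal{S}_n$ (with inverse $\Phi_{O^t}$), and by assumption \eqref{Fptope0} it sends $\mathcal{A}_{\lambda,\Lambda}^{i}$ to itself for $i=1,2$; it is therefore a bijection of each $\mathcal{A}_{\lambda,\Lambda}^{i}$ onto itself. Consequently,
\begin{equation*}
\mathcal{F}^-(O^t M O) = \inf_{A\in\mathcal{A}_{\lambda,\Lambda}^1}\tr(OAO^t M) = \inf_{B\in\mathcal{A}_{\lambda,\Lambda}^1}\tr(B M) = \mathcal{F}^-(M),
\end{equation*}
where we set $B = OAO^t$, and the identical argument with $\sup$ in place of $\inf$ and $\mathcal{A}_{\lambda,\Lambda}^2$ in place of $\mathcal{A}_{\lambda,\Lambda}^1$ handles $\mathcal{F}^+$. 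Combining this with the Hessian identity yields the claim.

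There is no real obstacle here; the only ingredient that goes beyond bookkeeping is the invariance \eqref{Fptope0} of the coefficient classes under orthogonal conjugation, which is exactly what makes the change of variables $A \leftrightarrow OAO^t$ a bijection of the index set over which the inf/sup is taken.
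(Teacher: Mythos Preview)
Your proof is correct and follows essentially the same approach as the paper: compute $D^2 v(x)=O^t D^2u(Ox)\,O$, use the cyclicity of the trace to rewrite $\tr(A\,O^tMO)=\tr(OAO^t\,M)$, and then invoke the invariance \eqref{Fptope0} to see that $A\mapsto OAO^t$ is a bijection of each $\mathcal{A}_{\lambda,\Lambda}^i$, so the inf/sup is unchanged. The paper's argument is essentially line-for-line the same, just slightly more compressed.
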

\begin{proof}
Since $D^2v(x)=O^tD^2u(Ox)O$, we have that 
\begin{equation*}\begin{split}\Fn(v)(x)&=\inf_{A\in\mathcal{A}_{\lambda, \Lambda}^1}\text{tr}\left(AO^tD^2u(Ox)O\right)=\inf_{A\in\mathcal{A}_{\lambda, \Lambda}^1}\text{tr}\left(OAO^tD^2u(Ox)\right)\\&
=\inf_{A\in\mathcal{A}_{\lambda, \Lambda}^1}\text{tr}\left(AD^2u(Ox)\right)=\Fn(u)(Ox),\end{split}\end{equation*}
where we have used that by \eqref{Fptope0},
$$\mathcal{A}_{\lambda, \Lambda}^1=\{OAO^t\,|\,A\in \mathcal{A}_{\lambda, \Lambda}^1,\, O\in\mathcal{O}(n)\}.$$

Similarly, $\Fp(v)(x)=\Fp(u)(Ox)$.
\end{proof}

\begin{remark}\label{FpFnremark}
By Proposition \ref{Fnproperties}, Harnack inequality holds true for nonnegative  viscosity solutions of $\puccin(u)\leq 0\leq \puccip (u),$ see \cite[Theorem 4.3]{ caffarelli_cabre_1995}. Observe also that $\Fn$ and $\Fp$ satisfy the  comparison principle:  if $D$ is a bounded domain and  $u$ is a viscosity subsolution for $\puccip$ in $D$, meaning $\puccip (u)\geq 0$ in the viscosity sense in $D$, $v$ is a viscosity supersolution for $\puccip$ in $D$, meaning $\puccip (v)\leq 0$ in the viscosity sense $D$, and $u\leq v$ on $\p D$ then $u\leq v$ in $\bar{D}$; the same result holds for $\Fn,$ 
see \cite{ caffarelli_cabre_1995, crandall_user_1992} for more details. In addition, since $\Fn$ and $\Fp$ are respectively concave and convex, interior $C^{2,\alpha}$-estimates for solutions of $\mathcal{F}^\pm(u)=0$ hold true, see  \cite{ caffarelli_cabre_1995}.
\end{remark}

\begin{remark}
\label{util} If $u$ is solution to \eqref{limitproblemk=2}, then 
\begin{equation*}\Gamma:=\partial\{u_1>0\}\cap\Om=\partial\{u_2>0\}\cap\Om.\end{equation*}
Indeed, if there was $x_0\in  \left( \partial\{u_1>0\}\cap\Om\right)\setminus\partial\{u_2>0\}$, then in a ball of radius $r$ around $x_0$ we would have 
$$ \puccin (u_1)=\puccin (u)\leq 0,\:u_1\geq 0,\: u_1\not\equiv 0,\: u_1(x_0)=0.$$
This contradicts the strong maximum principle.
 \end{remark}

\subsection{Some examples}
\begin{ex}
\label{ex: pucci}
 As discussed in the Introduction, the free boundary problem \eqref{limitproblempuccibis}, which is the  limit problem of a population model studied in 
 \cite{quitalo_free_2013}   that takes into account diffusion with preferential directions,  is a particular case of problem \eqref{limitproblemk=2}. Indeed by choosing $\mathcal{A}_{\lambda, \Lambda}^1=\mathcal{A}_{\lambda, \Lambda}^2=\mathcal{A}_{\lambda, \Lambda}$, we have that $\Fn=\pn$ and $\Fp=\pp$.
\end{ex}

\begin{ex}
By choosing $\mathcal{A}_{\lambda, \Lambda}^1=\{I_n\}$ and $\mathcal{A}_{\lambda, \Lambda}^2=\mathcal{A}_{\lambda, \Lambda}$, problem \eqref{limitproblemk=2} becomes
\begin{equation}\label{laplacianpuccipb}
\begin{cases}
\Delta u = 0 &\text{in }\{u>0\}\\
\ppuccip (u)= 0 &\text{in }\{u<0\} \\
\Delta u \leq 0&\text{in }\Om\\
\ppuccip(u) \geq 0&\text{in }\Om\\
u=f&\text{on }\partial\Om.\\
\end{cases}
\end{equation}
%
%
Since the Bellman equations are very helpful to solve optimal stopping strategies see \cite{krylov_controlled_2008}, this type of models can eventually be used to describe situations with multiple strategies.
\end{ex}

\begin{ex}
By the uniformly ellipticity, (e) in Proposition \ref{Fnproperties}, the operators $\Fp$ and $\Fn$  are Lipschitz continuous as functions in the space  $\mathcal{S}(n)$.
This regularity is optimal for the Pucci's operators $\ppuccin$ and $\ppuccip$. Indeed, 
 consider for example a family of matrices $\{M_t\,|\,t\in\R\}$ with eigenvalues $e_{1,t}=t$ and $e_{2,t}=e_{3,t}=\ldots=e_{n,t}=0$, then 
\begin{equation*}\ppuccin(M_t)=\begin{cases}\lambda t&\text{if }t\ge 0\\
\Lambda t&\text{if }t< 0,
\end{cases}
\end{equation*}
which is a no more than Lipschitz function for $\lambda<\Lambda$.
However there are operators in the class of extremal ones  that we consider here which are more regular. 
Consider for example,
$$\Fn(M)=\inf\{\text{tr}(AM)\,:\,A\in \mathcal{S}_p\}$$
$$\Fp(M)=\sup\{\text{tr}(AM)\,:\,A\in \mathcal{S}_p\}$$ where 
for $p>0$,
$$\mathcal{A}_{\lambda, \Lambda}^1=\mathcal{A}_{\lambda, \Lambda}^2=\mathcal{S}_p:=\{A=(a_{ij})\in \mathcal{S}_n \,:\,\|a_{ij}-\delta_{ij}\|_{l^p}\leq r_0<1\}$$
for some $\lambda=\lambda(r_0)<1\leq \Lambda$.  Since, for example, for $p= 2$ the balls in the $l^2$ norm are smooth, one can get a higher than Lipschitz regularity for $\Fp$ and $\Fn$ and thus, better than $C^{2,\alpha}$ estimates for the solutions $u$  of  $\mathcal{F}^\pm(u)=0$.
\end{ex}

\subsection{Notation} 
For a function $ u$,  $\nabla u$ and $D^2 u$ denote  respectively the gradient of $u$ and the Hessian matrix of  $u$. The standard Euclidean inner produt is denoted by $<\cdot, \cdot>$. We define $u^+:=\max(u,0)$ and $u^-:=\max(-u,0)$ which are the positive and negative part of $u$.
  In the rest of the paper, for the solution $u$ of  \eqref{limitproblemk=2}, we will use the notation 
  \begin{equation}\label{pos neg}
 u_1:=u^+ \quad \mbox{and} \quad u_2:=u^-,
 \end{equation} 
 at our convenience.
Notice that  
 \begin{equation*}
 |u|(x)=\max (u_1(x), u_2(x))=u_1(x)+u_2(x).
 \end{equation*}
Furthermore, we denote by 
$$\Om(u_i):=\{u_i>0\}$$
$i=1,2$,  the positivity set of $u_i$ and by 
 $$\Ga:=\partial\{u>0\}\cap\Om,$$
the free boundary set. 
If $u$ has an asymptotic development around $x_0$  along the direction $\nu_1$
given by,
$$
u(x)= \alpha <x-x_0, \nu_1>^+-\beta <x-x_0, \nu_1>^- + o(|x-x_0|)
$$ we write that 
$$
\frac{\p u_1}{\p \nu_1}=\alpha \quad \mbox{and}\quad\frac{\p u_2}{\p \nu_2}=\beta, 
$$ 
where $\nu_2=-\nu_1$.
 We will consider the Euclidean norm for the distance,  $d(x,y)=|x-y|$. Furthermore, we denote
\begin{equation}\label{J(ui)}J_r(u_i,x_0):=  \frac{1}{r^{2}} \int_{B_{r}(x_0)} \frac{\vert  \nabla u_i \vert^{2}}{\vert x-x_0 \vert^{n-2}} dx,
\end{equation}  and
\begin{equation}\label{J(u)}
J_r(u,x_0):=J_r(u_1,x_0)J_r(u_2,x_0).
\end{equation} 
When $x_0=0$ we simply write  $J_r(u)$ instead of $J_r(u,0)$.

\section{Main results}\label{mainresultssec}
\begin{thm} \label{existencethm} Let $\Om$ be a bounded  smooth domain of $\R^n$ and $f$
be a Lipschitz continuous function on $\partial \Om$ such that 
$f^+\not\equiv0$ and $f^-\not\equiv0.$ Then there exists a viscosity solution $u$ of \eqref{limitproblemk=2} such that $u_1=u^+\not\equiv0$ and
$u_2=u^-\not\equiv0$. Moreover $u$ is Lipschitz continuous in $\bar\Om$.
\end{thm}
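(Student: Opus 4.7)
The plan is to obtain $u$ as the uniform limit of solutions $u^\epsilon$ to a family of uniformly elliptic approximating problems whose single operator interpolates between $\Fn$ and $\Fp$ according to the sign of $u$. Fix a smooth non-decreasing family $G_\epsilon:\R\to[0,1]$ with $G_\epsilon\equiv 0$ on $(-\infty,0]$ and $G_\epsilon\equiv 1$ on $[\epsilon,+\infty)$, and set
$$\mathcal{H}_\epsilon(u)(x):=G_\epsilon(u(x))\,\Fn(D^2u(x))+\bigl(1-G_\epsilon(u(x))\bigr)\,\Fp(D^2u(x)).$$
Because both $\mathcal{F}^\pm$ are uniformly elliptic with the same constants $\lambda,\Lambda$ by Proposition \ref{Fnproperties}(e), the pointwise convex combination $\mathcal{H}_\epsilon$ is uniformly elliptic in $D^2 u$ with those same constants, and by Proposition \ref{Fnproperties}(d) one has the sandwich $\pn(D^2u)\le\mathcal{H}_\epsilon(u)\le\pp(D^2u)$.

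The first step is to solve the Dirichlet problem $\mathcal{H}_\epsilon(u^\epsilon)=0$ in $\Om$, $u^\epsilon=f$ on $\p\Om$. Because the dependence of $\mathcal{H}_\epsilon$ on $u$ through $G_\epsilon(u)$ is not monotone, Perron's method for fully nonlinear equations does not apply directly, so I would use a Schauder fixed-point argument: for each $v\in C(\bar\Om)$ with $v=f$ on $\p\Om$, the frozen operator $\mathcal{L}_v(w):=G_\epsilon(v)\Fn(D^2w)+(1-G_\epsilon(v))\Fp(D^2w)$ is a genuinely uniformly elliptic fully nonlinear operator in $D^2 w$ for which the Dirichlet problem admits a unique viscosity solution $w=T(v)$; interior $C^\alpha$ estimates combined with uniform $L^\infty$ bounds coming from comparison with $\pn$- and $\pp$-harmonic extensions of $f$ provide the compactness and continuity needed to apply Schauder's theorem and produce $u^\epsilon=T(u^\epsilon)$.

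The second step is a uniform Lipschitz estimate $\|u^\epsilon\|_{C^{0,1}(\bar\Om)}\le L$ independent of $\epsilon$. From the sandwich above, $\pn(u^\epsilon)\le 0\le\pp(u^\epsilon)$ in the viscosity sense, so boundary Lipschitz estimates follow by comparison with Pucci-extremal barriers built from a Lipschitz extension of $f$ (using the smoothness of $\p\Om$), while interior Lipschitz bounds follow from the Ishii-Lions doubling-of-variables technique for uniformly elliptic viscosity solutions, or equivalently from Harnack-type inequalities applied to finite differences of $u^\epsilon$. Extract a subsequence $u^{\epsilon_k}\to u$ uniformly in $\bar\Om$; then $u$ is Lipschitz, $u=f$ on $\p\Om$, and by stability of viscosity solutions the sandwich passes to the limit to give $\Fn(u)\le 0\le \Fp(u)$ in $\Om$. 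On any compactly contained subdomain of $\{u>0\}$ one has $u^{\epsilon_k}\ge\epsilon_k$ eventually, so $G_{\epsilon_k}(u^{\epsilon_k})\equiv 1$ and $\Fn(u^{\epsilon_k})=0$ there; stability yields $\Fn(u)=0$ in $\{u>0\}$, and symmetrically $\Fp(u)=0$ in $\{u<0\}$. Nontriviality of $u^\pm$ is immediate from continuity of $u$ up to $\p\Om$ combined with the hypotheses $f^+\not\equiv 0$ and $f^-\not\equiv 0$.

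The main obstacle I anticipate is the existence of the approximating solution $u^\epsilon$ itself: because $\mathcal{H}_\epsilon$ fails to be proper in $u$, standard machinery for fully nonlinear viscosity equations cannot be invoked as a black box, and one must carefully set up the Schauder fixed-point argument, verifying both the continuity of $T$ in the uniform topology and the existence of a closed, convex, bounded subset of $C(\bar\Om)$ that $T$ maps into a precompact subset of itself. Once $u^\epsilon$ is produced together with the uniform $C^{0,1}$ estimate, the passage to the limit and the verification of all four conditions in \eqref{limitproblemk=2} is a routine stability argument.
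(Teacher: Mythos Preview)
Your approach is essentially the paper's: approximate by the interpolated operator $G_\epsilon(u)\Fn+(1-G_\epsilon(u))\Fp$, solve the Dirichlet problem for each $\epsilon$ via a fixed-point argument, prove a uniform Lipschitz bound, and pass to the limit by stability. The paper uses Leray--Schauder rather than Schauder, feeding the a~priori Lipschitz estimate into the fixed-point scheme, but this is a minor variation.

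One correction worth making: contrary to what you write, the operator $\mathcal{H}_\epsilon$ \emph{is} monotone (proper) in $u$. Since $G_\epsilon$ is non-decreasing and $\Fn(M)\le\Fp(M)$ for every $M$, one has
\[
\partial_r\mathcal{H}_\epsilon(r,M)=G_\epsilon'(r)\bigl[\Fn(M)-\Fp(M)\bigr]\le 0.
\]
Far from being an obstacle, this monotonicity is exactly what drives the uniform interior Lipschitz estimate, which is where the real technical work lies (not in the existence of $u^\epsilon$, as you anticipate). Heuristically, differentiating the equation in a direction $\sigma$ shows that $\partial_\sigma u^\epsilon$ satisfies a linear uniformly elliptic equation with zeroth-order coefficient $G_\epsilon'(u^\epsilon)[\Fn(u^\epsilon)-\Fp(u^\epsilon)]\le 0$, so the maximum principle bounds $\partial_\sigma u^\epsilon$ by its boundary values; the paper makes this rigorous through sup/inf-convolutions and a convex-envelope argument on the incremental quotient. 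Your references to ``Ishii--Lions doubling'' or ``Harnack for finite differences'' point in the right direction but skip precisely this structural observation, without which the $u$-dependence of the operator would obstruct a clean difference-quotient argument.
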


\begin{thm}\label{limitpbthm}
Any  Lipschitz solution $u$ of \eqref{limitproblemk=2} such that $u_1=u^+\not\equiv0$ and
$u_2=u^-\not\equiv0$, satisfies in the viscosity sense the free boundary condition
$$\frac{\partial u_1}{\partial \nu_1}=\frac{\partial u_2}{\partial \nu_2}\quad\text{on }\Gamma=\partial\{u>0\}\cap\Om,$$
meaning that: if there exists a tangent ball $B$  at $x_0 \in\Gamma$,  such that either $B \subset \Om(u_1)$ or $B \subset \Om(u_2)$, then
there exists $\alpha>0$ such that 
 \begin{equation}\label{asymptodevumainthm}
 u(x)=\alpha <x-x_0,\nu_1>+o(|x-x_0|)
 \end{equation}
 where  $\nu_1$ is the normal vector to  $\partial B$ at $x_0$ pointing inward to $\Om(u_1)$ (and $\nu_2=-\nu_1$).
 In particular,  $u$ is a viscosity solution to the free boundary problem \eqref{freebddproblemintro}.
\end{thm}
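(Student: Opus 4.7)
The plan is to establish \eqref{asymptodevumainthm} by a blow-up argument, using the inequalities $\Fn(u)\le 0\le \Fp(u)$ throughout $\Om$ as the mechanism that matches the slopes on the two sides of $\Gamma$. Normalize so that $x_0=0$, $\nu_1=e_n$, and, without loss of generality, $B=B_\rho(-\rho e_n)\subset \Om(u_1)$. The first step is to pin down linear growth of $u_1$ at $0$: since $\Fn(u_1)=0$ in $B$ with $u_1\ge 0$ and $u_1(0)=0$, Hopf's boundary point lemma for the concave uniformly elliptic $\Fn$ (built from explicit radial barriers in an annulus inside $B$, compatible with Remark \ref{FpFnremark}) gives a constant $c>0$ such that $u_1(x)\ge c\,\dist(x,\partial B)$ for $x\in B$ close to $0$. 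Thus $0$ is a regular point in the sense of Definition \ref{firstdefinition}, so the non-degeneracy results of Section \ref{regulapointssec} furnish the analogous linear growth of $u_2$ at $0$.

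Next I set $v_r(x):=u(rx)/r$. The Lipschitz estimate from Theorem \ref{existencethm} makes $\{v_r\}$ precompact on compact sets, and any limit $u_0$ along $r_k\to 0^+$ is a global Lipschitz viscosity solution of \eqref{limitproblemk=2} in $\R^n$ with $u_0(0)=0$; its positive and negative parts are subharmonic thanks to property (d) of Proposition \ref{Fnproperties} (applied in $\{u>0\}$ and $\{u<0\}$, cf.\ Remark \ref{util}), and both retain the two-sided linear non-degeneracy from the previous step. The scaling identity $J_\rho(v_{r_k},0)=J_{r_k\rho}(u,0)$ applied to \eqref{J(u)}, combined with the ACF monotonicity formula of Section \ref{monotonicity}, forces $\rho\mapsto J_\rho(u_0,0)$ to be constant. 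The equality case of ACF, together with the Pucci-type one-sided harmonicity of $u_0^+$ on $\{u_0>0\}$ and of $u_0^-$ on $\{u_0<0\}$ and the two-sided non-degeneracy, produces the two-plane profile
\[
u_0(x)=\alpha\,\langle x,\nu\rangle^+-\beta\,\langle x,\nu\rangle^-,\qquad \alpha,\beta>0.
\]
The tangent ball $B$ scales to the half-space $\{\langle x,e_n\rangle>0\}\subset\{u_0>0\}$, forcing $\nu=e_n=\nu_1$.

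It remains to prove $\alpha=\beta$. Off $\{x_n=0\}$ the function $u_0$ is affine, so the viscosity inequalities $\Fn(u_0)\le 0\le \Fp(u_0)$ have content only at the origin. If $\alpha<\beta$, pick $\gamma\in(\alpha,\beta)$ and, for each $M>0$, set $\varphi(x)=\gamma x_n-\tfrac{M}{2}x_n^2$; a one-dimensional check shows that $\varphi\ge u_0$ on $B_r(0)$ for $r\le 2\min(\gamma-\alpha,\beta-\gamma)/M$ with $\varphi(0)=u_0(0)=0$, so $\varphi$ is an admissible upper test function. The Hessian $D^2\varphi(0)$ has eigenvalues $0,\dots,0,-M$, so property (d) of Proposition \ref{Fnproperties} gives $\Fp(D^2\varphi(0))\le \ppuccip(D^2\varphi(0))=-\lambda M$, contradicting $\Fp(u_0)\ge 0$ at $0$ for $M$ large. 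Symmetrically, $\alpha>\beta$ is ruled out by testing $u_0$ from below at $0$ with $\psi(x)=\gamma x_n+\tfrac{M}{2}x_n^2$, $\gamma\in(\beta,\alpha)$: the supersolution condition would give $\Fn(D^2\psi(0))\le 0$, whereas property (d) yields $\Fn(D^2\psi(0))\ge \ppuccin(D^2\psi(0))=\lambda M>0$. Hence $\alpha=\beta$, and since the same conclusion applies to every subsequential blow-up the limit is unique, which yields \eqref{asymptodevumainthm}.

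The main obstacle I anticipate is not this final test-function matching, which is essentially a direct computation once the blow-up profile is in hand, but rather extracting the two-plane structure of the blow-up in the previous step. The classical ACF rigidity argument exploits genuine harmonicity of $u_0^\pm$ on their positivity sets; here only Pucci-type harmonicity is available, so care is needed to check that constancy of $J_\rho(u_0,0)$, combined with subharmonicity and two-sided linear non-degeneracy, still forces $\{u_0>0\}$ and $\{u_0<0\}$ to be complementary half-spaces on which $u_0$ is affine. This is the reason the non-degeneracy of both $u_1$ and $u_2$ at regular points must be secured before the blow-up is carried out, and why Section \ref{regulapointssec} logically precedes the present theorem.
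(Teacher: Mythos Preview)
Your strategy is correct and follows the same skeleton as the paper (Lemma \ref{planeregularpointslem} + Lemma \ref{planeballconcor} + Theorem \ref{alpha=betathm}): blow-up, ACF equality case, identification of $\nu$ via the tangent ball, and finally $\alpha=\beta$. Two points deserve comment.

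\textbf{The $\alpha=\beta$ step.} Here you take a genuinely different and more elementary route. The paper works at the level of $u$ itself: once the full asymptotic expansion \eqref{planeballconcorformula} is in hand, Theorem \ref{alpha=betathm} touches $u$ from inside and outside with translated radial $\ppuccin$-barriers and balances the slopes using $\Fn(u)\le 0\le \Fp(u)$. You instead pass the viscosity inequalities $\Fn\le 0\le \Fp$ to the blow-up $u_0$ by stability and test $u_0$ at $0$ with the one-dimensional quadratics $\gamma x_n\mp\tfrac{M}{2}x_n^2$. This is clean and avoids the $o(|x|)$ error terms the paper has to absorb; it exploits exactly property (d) of Proposition \ref{Fnproperties}. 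The trade-off is that the paper's argument does not require knowing that the blow-up inherits the viscosity inequalities, whereas yours does.

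\textbf{The uniqueness step.} This is where you should be more explicit. The paper establishes full blow-up convergence \emph{before} matching the slopes, via the barrier/monotone-quantity argument in Lemma \ref{planeballconcor} (the sequence $\tilde m_k$). You reverse the order: first $\alpha=\beta$ along each subsequence, then uniqueness. Your sentence ``since the same conclusion applies to every subsequential blow-up the limit is unique'' is not yet a proof: knowing that each limit is $\alpha\langle x,e_n\rangle$ for \emph{some} $\alpha>0$ does not by itself pin down $\alpha$. What closes the argument is \eqref{alphabetaj0}: every subsequential blow-up satisfies $J_0(u)=c_n\alpha^2\beta^2=c_n\alpha^4$, so $\alpha=(J_0(u)/c_n)^{1/4}$ is independent of the subsequence, and with $\nu=e_n$ already fixed the limit is unique. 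State this. With it, you in fact bypass the entire barrier argument of Lemma \ref{planeballconcor}, which is a nice gain.

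Two small corrections: the tangent ball should be $B_\rho(\rho e_n)$ (not $B_\rho(-\rho e_n)$) if $\nu_1=e_n$ points into $\Om(u_1)$; and the Lipschitz bound on $u$ is an assumption of the theorem, not something you import from Theorem \ref{existencethm}. Finally, your worry about ACF rigidity is unfounded here: Theorem \ref{blowupthm} only requires subharmonicity of $u_0^\pm$, which you have from \eqref{puccifoprel}, and the non-degeneracy from Section \ref{regulapointssec} (via Lemma \ref{lemma: lower bound J} and Corollary \ref{corlemJpos}) rules out alternative (i), exactly as you anticipated.
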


\begin{thm}\label{C1alphaGammathm}
Let  $u$ be any Lipschitz solution of \eqref{limitproblemk=2} and 
let  $\mathcal{R}$ be  the set of regular points of $u$, according to Definition  \ref{firstdefinition}. Then $\mathcal{R}$  is an open subset  of $\Gamma$  and locally a surface of class 
 $C^{1,\alpha}$, with $0<\alpha\le1$.  In particular, $u$ has the asymptotic behavior \eqref{asymptodevumainthm} at any $x_0\in  \mathcal{R}$.
\end{thm}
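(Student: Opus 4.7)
The plan is to combine the two-phase viscosity formulation supplied by Theorem~\ref{limitpbthm} with a blow-up analysis at regular points and the flatness-implies-$C^{1,\alpha}$ regularity scheme of Wang~\cite{wang_regularity_2000,wang_regularity_2002}, adapted as in Feldman~\cite{Fel} to accommodate the two distinct operators governing the two phases. Since Theorem~\ref{limitpbthm} already identifies $u$ as a viscosity solution of the two-phase problem~\eqref{freebddproblemintro}, the essential step is to verify the geometric hypothesis of Wang's scheme, namely that $\Gamma$ is flat near every $x_0\in\mathcal R$. Once $C^{1,\alpha}$ regularity of $\Gamma$ is obtained in a neighborhood of $x_0$, openness of $\mathcal R$ and the asymptotic expansion~\eqref{asymptodevumainthm} follow as corollaries.

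To prove flatness at a fixed $x_0\in\mathcal R$, I first use the definition of regular point together with the viscosity inequalities $\puccin(u)\le 0\le\puccip(u)$ from~\eqref{limitproblemk=2}, which hold \emph{across} $\Gamma$: the results of Section~\ref{regulapointssec} then upgrade the linear growth of at least one of $u_1,u_2$ at $x_0$ to the two-sided nondegeneracy $\sup_{B_r(x_0)}u_i\ge Mr$ for $i=1,2$ and all small $r$. Since~\eqref{puccifoprel} makes both $u_1$ and $u_2$ subharmonic in $\Om$, the Alt--Caffarelli--Friedman monotonicity formula applies: the quantity $J_r(u,x_0)$ defined in~\eqref{J(u)} is nondecreasing in $r$. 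Combined with the Lipschitz bound on $u$, this lets me extract along some sequence $r_k\to 0^+$ a subsequential limit $u_0$ of the rescalings $u^{r_k}(x):=u(x_0+r_kx)/r_k$; this $u_0$ is a global, homogeneous-of-degree-one solution of~\eqref{freebddproblemintro} whose positive and negative parts are both nontrivial and subharmonic. The equality case of the monotonicity formula pins $u_0$ down as a two-plane function $\alpha\langle x,\nu\rangle^+-\beta\langle x,\nu\rangle^-$, and applying Theorem~\ref{limitpbthm} to $u_0$ at the origin---where a tangent half-ball to $\{u_0>0\}$ is available trivially---forces $\alpha=\beta$, so $u_0$ is linear. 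Unscaling yields flatness of $\Gamma$ near $x_0$.

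With flatness in hand, I apply the Wang regularity scheme~\cite{wang_regularity_2000,wang_regularity_2002}: sup-convolution barriers drive an iterative flatness-improvement argument that upgrades flat to Lipschitz, and then Lipschitz to $C^{1,\alpha}$. The main obstacle is the asymmetry of the two-phase problem: in~\cite{wang_regularity_2000,wang_regularity_2002} the \emph{same} concave fully nonlinear operator governs both phases, whereas here the concave operator $\puccin$ acts in $\{u>0\}$ while the convex operator $\puccip$ acts in $\{u<0\}$. Following Feldman~\cite{Fel}, I overcome this by replacing the standard barriers with radially symmetric ones built from the fundamental solutions of $\puccin$ constructed in the Appendix, and by exploiting the rotational invariance of $\mathcal F^\pm$ (Proposition~\ref{orthogonalF}) together with the interior $C^{2,\alpha}$ estimates of Remark~\ref{FpFnremark} at each iteration step.

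The remaining conclusions are corollaries. Once $\Gamma\cap U$ is a $C^{1,\alpha}$ hypersurface in some neighborhood $U$ of $x_0$, every $y\in\Gamma\cap U$ admits a tangent ball from both sides; the Hopf lemma applied to $\puccin(u_1)=0$ in $\{u_1>0\}\cap U$ and to $\puccip(u)=0$ in $\{u_2>0\}\cap U$ produces strict linear growth of both $u_1$ and $u_2$ at $y$, so $y\in\mathcal R$, and hence $\mathcal R$ is open. At any such $x_0$ the tangent ball provided by the $C^{1,\alpha}$ regularity lets me invoke Theorem~\ref{limitpbthm} directly, yielding the linear asymptotic expansion~\eqref{asymptodevumainthm}.
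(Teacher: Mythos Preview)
Your proposal is correct and follows the same route as the paper: the blow-up analysis at a regular point via the Alt--Caffarelli--Friedman monotonicity formula (the paper's Lemma~\ref{planeregularpointslem} and Corollary~\ref{flatnessprop}) supplies the flatness needed to feed the Wang--Feldman machinery (carried out in the paper as Propositions~\ref{Caffa1} and~\ref{Caffa2}), and openness of $\mathcal R$ then follows from the tangent-ball criterion once $\Gamma$ is locally $C^{1,\alpha}$. One small correction: the blow-up limit $u_0$ is not homogeneous of degree one \emph{a priori}---that is a \emph{conclusion} of the equality case in the monotonicity formula (via the constancy $J_s(u_0)\equiv J_0(u)$), not an input to it---and your detour through $\alpha=\beta$ for $u_0$ is unnecessary for the flatness step, since any two-plane limit $\alpha\langle x,\nu\rangle^+-\beta\langle x,\nu\rangle^-$ with $\alpha,\beta>0$ already has flat level sets.
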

The proofs of Theorems \ref{existencethm}, \ref{limitpbthm} and \ref{C1alphaGammathm}  are given respectively in   Sections  \ref{Existencesec}, \ref{Freeboundarycondisec} and  \ref{C1alphasec}.

\section{Backround: The monotonicity formula}\label{monotonicity}
In this section we recall the Alt-Caffarelli-Friedman monotonicity formula  and some related results that we will used later on in the paper. 
A proof  can be found in \cite{caffarelli_geometric_2005,petrosyan_regularity_2012}. We have: 

\begin{thm}[Monotonicity formula]\label{mootonicitythm}
Let $u_1,u_2\in C(B_1(0))$ be nonnegative subharmonic functions in $B_1(0)$. Assume $u_1u_2=0$ and   $u_1(0)u_2(0)=0$.  Let $u=u_1-u_2$ and 
$$J_r(u)=\frac{1}{r^4}\int_{B_r(0)} \frac{\vert  \nabla u_1 \vert^{2}}{\vert x\vert^{n-2}}\, dx \int_{B_r(0)} \frac{\vert  \nabla u_2 \vert^{2}}{\vert x \vert^{n-2}} \, dx,\quad 0<r<1.$$
Then $J_r(u)$ is finite and is a non-decreasing function of $r$. 
Moreover, $$J_r(u)\leq c(n)\|u_1\|^2_{L^2(B_1)}\|u_2\|^2_{L^2(B_1)},\quad 0<r\le\frac12.$$
\end{thm}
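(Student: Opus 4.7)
The plan is to reduce the monotonicity of $J_r$ to the classical Friedland--Hayman inequality for the first Dirichlet eigenvalue of the spherical Laplacian, following the Alt--Caffarelli--Friedman argument. I would write $\phi_i(r):=\int_{B_r(0)}|\nabla u_i|^2\,|x|^{2-n}\,dx$, so that $J_r(u)=r^{-4}\phi_1(r)\phi_2(r)$. After establishing absolute continuity of $\phi_i$ on $(0,1)$ via the coarea formula (which yields $\phi_i'(r)=r^{2-n}\int_{\partial B_r}|\nabla u_i|^2\,d\sigma$ for a.e.\ $r$), the identity
\[
r\,\frac{d}{dr}\log J_r(u)=-4+\frac{r\phi_1'(r)}{\phi_1(r)}+\frac{r\phi_2'(r)}{\phi_2(r)}
\]
reduces the monotonicity assertion to proving $r\phi_i'(r)/\phi_i(r)\ge 2\beta_i(r)$ with $\beta_1(r)+\beta_2(r)\ge 2$.

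The quantity $\beta_i(r)$ is the \emph{characteristic constant} of the angular set $\Sigma_i(r):=(\{u_i>0\}\cap\partial B_r)/r\subset S^{n-1}$, namely the unique positive root of $\beta(\beta+n-2)=\lambda_1(\Sigma_i(r))$, where $\lambda_1$ is the first Dirichlet eigenvalue of $-\Delta_{S^{n-1}}$ on $\Sigma_i(r)$. To produce the inequality $r\phi_i'(r)\ge 2\beta_i(r)\phi_i(r)$, I would decompose $|\nabla u_i|^2=|\partial_r u_i|^2+r^{-2}|\nabla_{S^{n-1}} u_i|^2$ on $\partial B_r$, apply the Rayleigh characterization of $\lambda_1(\Sigma_i(r))$ to $u_i|_{\partial B_r}\in W^{1,2}_0(\Sigma_i(r))$, and use the subharmonicity $u_i\Delta u_i\ge 0$ tested against $|x|^{2-n}$ (which is $\Delta$-harmonic away from the origin), combining the mixed radial/tangential boundary terms by Cauchy--Schwarz. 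This is the homogeneity-detection step: the inequality is saturated precisely by the radially homogeneous degree-$\beta_i$ functions.

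The crux is then the Friedland--Hayman inequality: for disjoint open subsets $\Sigma_1,\Sigma_2\subset S^{n-1}$, $\beta_1+\beta_2\ge 2$, with equality for complementary hemispheres. Disjointness of $\Sigma_1(r),\Sigma_2(r)$ for a.e.\ $r$ is immediate from $u_1u_2\equiv 0$. For the a priori bound with $r\le 1/2$, monotonicity already gives $J_r(u)\le J_{1/2}(u)$, and $\phi_i(1/2)$ is then estimated by splitting the integral at $|x|=1/4$: on the outer annulus the weight $|x|^{2-n}$ is bounded, while on the inner ball a Caccioppoli-type inequality for the nonnegative subharmonic function $u_i$ controls $\|\nabla u_i\|_{L^2(B_{1/2})}^2$ by $\|u_i\|_{L^2(B_1)}^2$; a weighted Hardy-type estimate absorbs the singular weight near the origin, producing the claimed bound $c(n)\|u_1\|_{L^2(B_1)}^2\|u_2\|_{L^2(B_1)}^2$.

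The main obstacle is the Friedland--Hayman inequality itself, which is a genuinely deep spherical isoperimetric-type statement proved by Schwarz-style symmetric rearrangement into spherical caps. A secondary technical point is justifying the Rayleigh-quotient eigenvalue estimate when the nodal set $\{u_i=0\}$ is only measurable rather than a smooth hypersurface; the standard remedy is to work with the open superlevel sets $\{u_i>\varepsilon\}$, prove the estimate there for each $\varepsilon>0$, and pass to the limit $\varepsilon\to 0^+$ using monotone convergence together with the absolute continuity of $\phi_i$ established at the outset.
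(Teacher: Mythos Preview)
The paper does not actually prove this theorem: it is stated as background and the proof is delegated to the references \cite{caffarelli_geometric_2005,petrosyan_regularity_2012}. Your outline is essentially the standard Alt--Caffarelli--Friedman argument that one finds in those references, so there is nothing substantive to compare.
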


Theorem \ref{mootonicitythm} can be applied  to  $u=u_1-u_2$ solution  of \eqref{limitproblemk=2}. Indeed, by \eqref{puccifoprel} we have $\D u_1 \geq \puccin(u_1)= 0$ in $\Om(u_1)$  and $\Delta u_2\geq -\puccip(-u_2)=0$ in $\Om(u_2)$. Therefore, both $u_1$ and $u_2$ are subharmonic functions in the viscosity sense,  and thus is in the distributional sense, in the whole  $\Om$, 
\begin{equation}
\label{uno} \D u_1 \geq 0\quad \text{in } \Om
\end{equation} 
  and 
\begin{equation} 
\label{dos}  \Delta u_2\geq 0\quad \text{in } \Om.
\end{equation}

\begin{remark}
\label{remark: lower bound}
Since $J_r(u)$ is a monotone nonnegative function,  there exists
\begin{equation}
\label{zero0}
J_0(u):= \lim_{r\rightarrow 0^+} J_r(u).
\end{equation}
\end{remark}

The following theorem gives information on the case $J_r(u)$ constant. A proof of it can be found in \cite{petrosyan_regularity_2012}.

\begin{thm}
\label{blowupthm}
Let $u_1,u_2\in  C(B_1(0))$ be nonnegative subharmonic functions in $B_1(0)$. Assume $u_1u_2=0$ and  $u_1(0)u_2(0)=0$ and  let $u=u_1-u_2$. 
If $$J_{r_1}(u)=J_{r_2}(u)=:k$$ for some $0<r_1< r_2<1$,  
then, either one or the other of the following holds:
\begin{itemize}
\item[(i)] $u_1\equiv 0$ in $B_{r_2}(0)$ or $u_2\equiv 0$ in $B_{r_2}(0)$;
\item[(ii)] there exist a unit vector ${\bf \nu}$,  positive constants $\alpha, \beta$ and a universal positive constant  $c_n$, such that 
 $$k=c_n\alpha^2\beta^2$$ and 
  for any $x\in B_{r_2}(0)$,
$$
u(x) = \alpha <x, {\bf \nu}>^+ - \,\beta<x,{\bf \nu}>^-.
$$
\end{itemize}
\end{thm}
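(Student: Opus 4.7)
My plan is to trace the standard proof of the monotonicity formula (Theorem \ref{mootonicitythm}) and read off the rigidity from the cases of equality in the sharp inequalities used there. The ACF argument expresses
$$\frac{d}{dr} \log J_r(u) = -\frac{4}{r} + \sum_{i=1}^{2} \frac{\int_{\partial B_r} |\nabla u_i|^2 |x|^{2-n}\, d\sigma}{\int_{B_r} |\nabla u_i|^2 |x|^{2-n}\, dx},$$
and each quotient is bounded below by $2\alpha_i(r)/r$, where $\alpha_i(r)$ is the characteristic exponent attached to the first Dirichlet eigenvalue $\lambda_i(r)$ of the spherical Laplacian on the relatively open set $\Omega_i(r) := \{u_i > 0\} \cap \partial B_r$, via $\alpha(\alpha + n - 2) = \lambda$. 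This lower bound uses the subharmonicity of $u_i$ together with a sharp Rayleigh/Hardy-type inequality applied to the radial decomposition of $u_i$. Monotonicity is then closed by the Friedland--Hayman inequality $\alpha_1(r) + \alpha_2(r) \ge 2$, valid for any pair of disjoint spherical domains.

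Assuming $J_{r_1}(u) = J_{r_2}(u)$, monotonicity forces $J_r(u)$ to be constant on $[r_1, r_2]$. If neither $u_i$ vanishes identically on $B_{r_2}$ (otherwise we are in case (i)), then each $\int_{B_r}|\nabla u_i|^2 |x|^{2-n}dx > 0$ on $(r_1, r_2)$, and all three inequalities above must be equalities for a.e. $r$ in this interval. Equality in Friedland--Hayman forces $\alpha_1(r) = \alpha_2(r) = 1$ and forces $\Omega_1(r)$, $\Omega_2(r)$ to be a pair of opposite open hemispheres, determined by a unit vector $\nu(r)$. Equality in the Hardy-type bound forces $u_i$ to be positively homogeneous of degree $\alpha_i(r) = 1$ on $B_{r_2} \setminus B_{r_1}$, with angular profile proportional to the first eigenfunction on the hemisphere $\Omega_i(r)$; putting these together yields $u_i(x) = c_i(r)\,\langle x, \nu(r)\rangle^\pm$ on each sphere of that annulus.

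Continuity of $u_i$ then forces $\nu(r), c_1(r), c_2(r)$ to be constants $\nu, \alpha, \beta$, so that $u(x) = \alpha\langle x,\nu\rangle^+ - \beta\langle x,\nu\rangle^-$ on the annulus $B_{r_2}\setminus B_{r_1}$. Since $u_1$ is harmonic in its open positivity set and agrees on a nonempty open subset of it with the harmonic function $\alpha\langle x,\nu\rangle^+$, unique continuation extends the identity throughout the connected component containing the annulus; a comparison argument using the subharmonicity of $u_1$ and $u_2$ across the nodal hyperplane then forces $\{u_1>0\} \cap B_{r_2} = \{\langle x,\nu\rangle > 0\} \cap B_{r_2}$, and likewise for $u_2$. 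Direct substitution finally gives
$$J_r(u) = \frac{\alpha^2 \beta^2}{r^4}\Bigl(\int_{B_r \cap \{\langle x,\nu\rangle > 0\}} \frac{dx}{|x|^{n-2}}\Bigr)\Bigl(\int_{B_r \cap \{\langle x,\nu\rangle < 0\}} \frac{dx}{|x|^{n-2}}\Bigr) = c_n\,\alpha^2 \beta^2,$$
with a universal dimensional constant $c_n$. I expect the main obstacle to be the careful identification of the equality cases in the two sharp inequalities and the propagation of the planar structure from the annulus to all of $B_{r_2}$ (in particular verifying that the axis $\nu(r)$ is independent of $r$); the derivation of the monotonicity formula itself is by now standard bookkeeping within the ACF framework.
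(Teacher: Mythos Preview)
The paper does not supply its own proof of this theorem; it simply cites \cite{petrosyan_regularity_2012} and treats the result as background. Your sketch is precisely the standard ACF rigidity argument found in that reference (and in Caffarelli--Salsa): differentiate $\log J_r$, recognize the two sharp inequalities (the Hardy/Rellich-type bound giving $2\alpha_i(r)/r$, and Friedland--Hayman giving $\alpha_1+\alpha_2\ge2$), and read off the equality cases. So there is nothing to compare---your approach \emph{is} the one the paper defers to.

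One point to tighten: in the step where you extend the two-plane structure from the annulus $B_{r_2}\setminus B_{r_1}$ to all of $B_{r_2}$, you invoke ``$u_1$ is harmonic in its open positivity set'' and unique continuation. Under the stated hypotheses $u_1$ is only subharmonic; harmonicity in the annular region is itself a consequence of the equality analysis (equality in the integration-by-parts step forces $\Delta u_i=0$ there), not an assumption. Moreover, unique continuation alone does not reach the inner ball $B_{r_1}$, since the equality information lives only on $[r_1,r_2]$. The clean way to close this is to use that $u_1,u_2$ are subharmonic on $B_{r_1}$ with the known two-plane boundary data on $\partial B_{r_1}$, together with $u_1u_2=0$: compare $u_1$ and $u_2$ simultaneously against the harmonic extensions of $\alpha\langle x,\nu\rangle^+$ and $\beta\langle x,\nu\rangle^-$ and use the disjoint-support constraint to rule out any deviation. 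This is done carefully in the cited reference, so the gap is more a matter of presentation than substance.
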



\section{Proof of Theorem \ref{existencethm}}\label{Existencesec}

We consider the Heaviside function $H:\real \rightarrow \{0,1\}$,
\begin{equation*}
 H (x)=
\begin{cases}
 & 1 \mbox{ when } x \geq 0\\
 & 0 \mbox{ when } x < 0,\\
\end{cases}
\end{equation*}
and we let $H_{\ep}$ denote a smooth approximation of $H$, satisfying $H'_{\ep} \geq 0$.
 Consider the  fully nonlinear uniformly elliptic operator $G$, defined by
\begin{equation}
\label{approx operator}
G(u):=H(u)\, \puccin(u)+(1-H(u))\,\puccip(u)
\end{equation} and its $\ep$-approximation $G_\ep,$ defined by
\begin{equation}
\label{approx operator}
G_\ep(u):=H_\ep(u)\, \puccin(u)+(1-H_\ep(u))\,\puccip(u).
\end{equation} 
 To prove existence of a Lipschitz solution of   \eqref{limitproblemk=2}, we  prove  that for any $\ep>0$,  there exists $u^\ep$  viscosity solution of the problem
\begin{equation}
\label{ep problem}
\begin{cases}
G_\ep(u^\ep)=0&\quad \mbox{in}\: \Om\\
u^\ep=f & \quad \mbox{on}\: \p\Om,\\
\end{cases}
\end{equation} 
and  that the  functions $u^\ep$'s  are  Lipschitz continuous uniformly in $\ep$.  
Existence of a solution of  \eqref{limitproblemk=2} will then follow by using the Ascoli-Arzel\`{a} Theorem and the stability of the viscosity solutions in the sets $\{u>0\}$ and $\{u<0\}$.
\begin{remark} \label{forcomparisson} By Proposition \ref{Fnproperties},  in the viscosity sense 
$$\ppuccin(u^\ep)\le\Fn(u^\ep)\leq G_\ep(u^\ep)\leq \Fp(u^\ep)\le \ppuccip(u^\ep)  $$
\end{remark}
We start by proving that any viscosity solution of  \eqref{ep problem} is Lipschitz continuous with Lipschitz norm independent of $\ep$. 

\begin{thm}\label{lipschthm} 
Let $\ep>0$,  $\Om$ be a bounded  smooth domain and  $G_\ep$  the operator defined in \eqref{approx operator}. Let  $f \in C^{0,1}(\p\Omega)$ satisfy
\begin{equation*}\|f\|_{ C^{0,1}(\p\Omega)}\leq K_0.\end{equation*}
Then, any  continuous viscosity solution $u^\ep$ of problem \eqref{ep problem} is Lipschitz continuous in $\overline{\Om}$ and  
\begin{equation*}\|u^\ep\|_{ C^{0,1}(\overline{\Om})}\leq C,\end{equation*}
where $C=C(n,\Om,\lambda,\Lambda, K_0)$.
\end{thm}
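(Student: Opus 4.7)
The plan is in three steps: an $L^\infty$ bound, a boundary Lipschitz bound via barriers, and a transfer to a global Lipschitz bound via a doubling-of-variables argument applied to the Pucci envelopes of $G_\epsilon$ supplied by Remark~\ref{forcomparisson}.

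For the $L^\infty$ bound I would compare $u^\epsilon$ with the constants $\pm\|f\|_{L^\infty(\partial\Om)}$, both annihilated by $\pn$ and $\pp$, using the Pucci comparison principle recalled in Remark~\ref{FpFnremark}; this at once yields $\|u^\epsilon\|_{L^\infty(\overline{\Om})}\le K_0$, uniformly in $\epsilon$. For the boundary Lipschitz bound, smoothness of $\Om$ furnishes a uniform exterior sphere condition, so at every $x_0\in\partial\Om$ there is a tangent ball $B_{R_0}(z)\subset\R^n\setminus\overline{\Om}$ with $R_0=R_0(\Om)$. I would then use a standard radial Pucci barrier of the form
$$\varphi^\pm(x) := f(x_0) \pm L_0\bigl(|x-z|^{-\alpha^*}-R_0^{-\alpha^*}\bigr),$$
with exponent $\alpha^*=\alpha^*(n,\lambda,\Lambda)>0$ chosen so that the relevant Pucci operator annihilates the radial part in $\R^n\setminus\{z\}$. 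Taking $L_0=L_0(n,\Om,\lambda,\Lambda,K_0)$ large enough that $\varphi^-\le u^\epsilon\le\varphi^+$ on $\partial(\Om\cap B_{2R_0}(z))$---possible by the Lipschitz norm of $f$ on $\partial\Om$ and the $L^\infty$ bound---the Pucci comparison principle propagates these inequalities inside the region, and since $\varphi^\pm(x_0)=f(x_0)$ with bounded gradient at $x_0$, one obtains $|u^\epsilon(x)-u^\epsilon(x_0)|\le L_0|x-x_0|$ for $x\in\overline{\Om}$ near $x_0$, uniformly in $\epsilon$.

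For the global Lipschitz bound I would run an Ishii--Lions doubling-of-variables argument using solely the Pucci envelopes, which are translation invariant. For $L\gg L_0$ and $\delta>0$ introduce
$$\Psi_{L,\delta}(x,y):=u^\epsilon(x)-u^\epsilon(y)-L|x-y|-\delta(|x|^2+|y|^2)$$
on $\overline{\Om}\times\overline{\Om}$. If $\Psi_{L,\delta}$ were positive somewhere, it would attain a positive maximum at some $(x^*,y^*)$. The boundary estimate rules out $(x^*,y^*)$ touching $\partial(\Om\times\Om)$ once $L\gg L_0$. In the remaining interior case with $x^*\ne y^*$, the Crandall--Ishii lemma produces matrices $X,Y\in\mathcal{S}_n$ whose difference is controlled by the Hessian of $L|x-y|$ at $x^*-y^*$; combining the viscosity inequalities $\pp(X)\ge 0\ge\pn(Y)$ with the uniform ellipticity in Proposition~\ref{Fnproperties}(e) forces $L$ to be bounded by a constant $C=C(n,\lambda,\Lambda)$. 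Sending $\delta\to 0^+$ then gives the global Lipschitz estimate with constant $C=C(n,\Om,\lambda,\Lambda,K_0)$.

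The main obstacle lies in this last step. The operator $G_\epsilon$ is \emph{not} monotone in its zero-order argument (indeed $\partial_u G_\epsilon=H_\epsilon'(u)[\puccin(M)-\puccip(M)]\le 0$), so $G_\epsilon$ itself does not satisfy a standard comparison principle, and any direct translation argument pitting $u^\epsilon(\cdot+h)$ against $u^\epsilon(\cdot)$ breaks down. The remedy is to \emph{forget} $G_\epsilon$ in the gradient argument and rely exclusively on the Pucci envelope inequalities of Remark~\ref{forcomparisson}, which are uniformly elliptic and translation invariant; together with the boundary estimate they provide exactly the structure needed to close the doubling argument.
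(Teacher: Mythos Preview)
Your first two steps (the $L^\infty$ bound and the boundary Lipschitz estimate via barriers) are fine and essentially match the paper's Claim~1.  The genuine gap is in Step~3.

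Relying \emph{only} on the envelope inequalities $\Fp(u^\epsilon)\ge 0\ge \Fn(u^\epsilon)$ (equivalently $\pp(u^\epsilon)\ge 0\ge \pn(u^\epsilon)$) cannot yield a Lipschitz bound.  This class of functions is exactly the $S^*(\lambda,\Lambda)$ class, for which the best general regularity is $C^\alpha$, not $C^{0,1}$; for instance $v(x)=|x|^\alpha$ with $0<\alpha<1-\lambda(n-1)/\Lambda$ (nonempty when $\Lambda/\lambda>n-1$) satisfies $\pn(D^2v)\le 0\le\pp(D^2v)$ pointwise in $\R^n\setminus\{0\}$ but is not Lipschitz at the origin.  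Concretely, in your doubling argument with penalty $L|x-y|$, the Crandall--Ishii matrix inequality gives $X\le Y$ (plus tangential bounds of the form $\xi^T(X+Y)\xi\le \tfrac{4L}{|x^*-y^*|}|\xi|^2$ for $\xi\perp e$), but the pair of conditions $\pp(X)\ge 0$ and $\pn(Y)\le 0$ is perfectly compatible with all of this for every $L$: one can only conclude $\pp(X-Y)\ge \Fp(X)-\Fp(Y)$ and $\Fp(Y)-\Fn(Y)\ge 0$, and no inequality forces $L$ to be bounded.  The asserted step ``$\pp(X)\ge 0\ge\pn(Y)$ together with uniform ellipticity forces $L$ bounded'' is simply false when the sub- and super-operator differ.

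What the paper does instead is \emph{not} to discard $G_\epsilon$, but to exploit its specific structure by comparing $u^\epsilon$ with its own translate $u^\epsilon(\cdot+h\sigma)$, both of which solve the \emph{same} equation $G_\epsilon=0$.  At an interior maximum of the (regularized) difference, subtracting the two equations produces
\[
\bigl[H_\epsilon(\text{larger value})-H_\epsilon(\text{smaller value})\bigr]\bigl[\Fn(\cdot)-\Fp(\cdot)\bigr]+\pp(X-Y)\ge 2\kappa\lambda n,
\]
and the point is that the first bracket is nonnegative (since $H_\epsilon$ is nondecreasing and the ``larger value'' sits above the ``smaller value'' at the max) while $\Fn-\Fp\le 0$, so the whole zero-order contribution has the \emph{good} sign and drops out.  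In other words, the non-proper monotonicity $\partial_u G_\epsilon\le 0$ that you correctly flagged is precisely what makes the translation comparison work, not what kills it; the proof uses it rather than avoiding it.  This is the missing idea in your Step~3.
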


\begin{proof} 
Before giving the precise proof, we will give an heuristic argument, just to give an idea of the main technic. Assume  that  $u^\ep$ has a further regularity, for instance  $u^\ep \in C^{3}(\Om)$. Since   $\puccin(M)$ and $\puccip(M)$ are Lipschitz continuous with respect to $M\in \mathcal{A}_n$,   we have that $\puccin( u^\ep)$ and $\puccin(u^\ep)$  are Lipschitz continuous with respect to $x$,  therefore we can differentiate a.e. in $\Om$ both sides of the equation
$$
G_\ep(u^\ep )=0,
$$
in any  direction $\sigma \in \p B_1 (0)$. Indeed, if  we denote
$$
\mathcal{F}^\pm_{ij}(M):= \frac{\p \mathcal{F}^\pm }{\p m_{ij}}(M),
$$
where $M=(m_{ij})$, we obtain 
\begin{equation*}
\begin{split}
0=\p_\sigma G_\ep  (u^\ep)&= 
H_\ep(u^\ep)\puccin_{ij}(u^\ep) \p_{ij}(\p_\sigma u^\ep) + (1-H_\ep(u^\ep))\puccip_{ij}(u^\ep) \p_{ij}(\p_\sigma u^\ep)\\& + H^{'}_{\ep} (u^\ep)\left( \puccin (u^\ep) -\puccip (u^\ep) \right)\p_\sigma u^\ep.\\
\end{split}
\end{equation*}
Then, if $L$ denotes the linear operator, with coefficients that depend on $u^\ep$,  
$$L(\cdot):=\left(H_\ep(u^\ep) \,\puccin_{ij}(u^\ep)+(1-H_\ep(u^\ep))\,\puccip_{ij}(u^\ep)\right)\p_{ij}(\cdot),$$  we can see that $\p_\sigma u^\ep$ is a solution of 
$$
L(\p_\sigma u^\ep)+ \underbrace{H^{'}_{\ep} (u^\ep)\left( \puccin (u^\ep) -\puccip (u^\ep) \right)}_{\leq 0}\p_\sigma u^\ep=0.
$$
Since $H^{'}_{\ep} (u^\ep)\left( \puccin (u^\ep) -\puccip (u^\ep) \right)\leq 0$ by the maximum principle,
$$\sup_{\Om} \p_\sigma u^\ep \leq \sup_{\p \Om} \p_\sigma u^\ep.$$
Now, if $\sigma$ is a tangential direction to $\partial \Om$, then
$$\sup_{\p \Om} \p_\sigma u^\ep=\sup_{ \p\Om} \p_\sigma f\le \|\nabla f\|_{L^{\infty}(\partial\Om)}.$$
If $\sigma$ is a normal vector, then a barrier argument (as the one in Claim 1 below) shows that 
$$\sup_{\p \Om} \p_\sigma u^\ep\leq C.$$
Thus, $\p_\sigma u^\ep $ is bounded in $\Om$ and 
the arbitrarily of $\sigma$ implies the result. To overcome the lack of regularity, we will use standard techniques  from the theory of viscosity solutions.
In particular, we will discretize and  prove that the incremental quotient  is bounded, meaning that
there exists a constant $C_0$ independent of $\ep$ such that, for $\sigma \in \p B_1 (0)$ and $h>0$:
\begin{equation}
\label{goal}
\sup_{x,x+h\sigma\in\Om} \frac{u^\ep(x+\sigma h)-u^\ep(x)}{h}\leq C_0.
\end{equation}
Then the result holds true. To prove \eqref{goal}  we first prove the following claim:\\

\noindent{\em Claim 1 :} {\em There exists $C_0>0$ independent of $\ep$ such that  for any $x\in\overline{\Om}$ and  any $y\in\partial\Om$, 
$$|u^\ep(x)-u^\ep(y)|\leq C_0|x-y|.$$}
\noindent{\em Proof of Claim 1:} Consider the function $\psi$ solution to 
$$\begin{cases}
\ppuccin(\psi)=0&\text{in }\Om\\
\psi=f&\text{on }\partial\Om.\\
\end{cases}
$$ Then,  $\psi\in C^{2,\alpha}(\Om)$ and $\psi\in C^{0,1}(\overline{\Om})$, see \cite{caffarelli_cabre_1995,gilbarg_elliptic_2001}. 
  Remark \ref{forcomparisson} and comparison principle implies $u^\ep\geq \psi$  in $\overline{\Om}$.
  In particular, if $y\in\partial \Om$ and  $x \in \Om$, we have
\begin{equation}\label{claim1ulipbpoundary1}u^\ep(x)-u^\ep(y)\geq \psi(x)-\psi(y)\ge -C_0|x-y|,\end{equation} for some $C_0>0$ independent of $\ep$. 
Similarly, the inequality
\begin{equation}\label{claim1ulipbpoundary2}u^\ep(x)-u^\ep(y)\leq C_0|x-y|\end{equation}  follows by comparing $u^\ep$ with the solution $\varphi$  of 
$$\begin{cases}
\ppuccip(\varphi)=0&\text{in }\Om\\
\varphi=f&\text{on }\partial\Om.\\
\end{cases}$$
Claim 1 follows from estimates \eqref{claim1ulipbpoundary1} and \eqref{claim1ulipbpoundary2}. 

\medskip
Next, to prove  \eqref{goal}, assume by contradiction that, for some $\delta >0$,
\begin{equation*}
\sup_{x,x+h\sigma\in\Om}\frac{u^\ep(x+h\sigma )-u^\ep(x)}{h}\geq C_0+\delta,
\end{equation*} 
where  $C_0>0$ is given in Claim 1. 
Then, for $\kappa>0$, we have that 
\begin{equation}\label{contradiction}
\sup_{x,x+h\sigma\in\Om}\frac{u^\ep(x+h\sigma )+\kappa |x|^2-u^\ep(x)}{h}\geq C_0+\delta.
\end{equation} 
In what follows we will make explicit the dependence of the operator $G_\ep(u)$ in $u$ and $M\in \mathcal{S}_n$ by using the notation  $G_\ep(u,M)$. Denote $w^\ep_h(x):=u^\ep(x+h\sigma)+\kappa |x|^2$.
 Note that, by the uniformly ellipticity and the fact that $u^\ep$ is solution of  \eqref{ep problem}, 
  $w^\ep_h$ is a strict subsolution of $G(w^\ep_h-\kappa |x|^2,D^2w^\ep_h)=0$, in $\Om-h\sigma:=\{x-h\sigma\,|\,x\in\Om\}$ as it satisfies in the viscosity sense
 \begin{equation*}\begin{split}0&= G(w^\ep_h-\kappa |x|^2,D^2w^\ep_h-2\kappa I_n)\leq G(w^\ep_h-\kappa |x|^2,D^2w^\ep_h)-\ppuccin(2\kappa I_n)
 \\&=G(w^\ep_h-\kappa |x|^2,D^2w^\ep_h)-2\kappa\lambda n,
 \end{split}\end{equation*}
  from which
 \begin{equation}\label{strictsubsolution}G_\ep(w^\ep_h-\kappa |x|^2,D^2w^\ep_h)\ge  2\kappa\lambda n\quad\text{in } \Om-h\sigma.
 \end{equation} 
In order to infer a differential  inequality  satisfied by $w^\ep_h(x)-u^\ep(x)$ in the viscosity sense, consider
for any fixed $\tau_0>0$ and $0<\tau<\tau_0$,  the upper $\tau$-envelope of $w^\ep_h$ and the lower $\tau$-envelope of $u^\ep$ defined respectively by
$$
w^\tau(x):=\sup_{y\in\Om_{\tau_0}-h\sigma}  \left\{ w_h^\ep(y)+\tau -\frac{1}{\tau}|y-x|^2\right\},\quad x\in\Om_{\tau_0}-h\sigma
$$
$$
u_\tau(x):=\inf_{y\in \Om_{\tau_0}}  \left\{ u^\ep(y)-\tau +\frac{1}{\tau}|y-x|^2\right\},  \quad x\in\Om_{\tau_0}, 
$$
where $\Om_{\tau_0}:=\{x\in\Om\,|\,d(x,\partial\Om)> \tau_0\}$ and, for simplicity of notation, we have dropped the dependence on $h$ and $\ep$. 
\medskip

\noindent{\em Claim 2 :} The  upper and lower $\tau$-envelopes have the following properties:
\begin{itemize}
\item[a)]  $w^\tau\in C(\Om_{\tau_0}-h\sigma),\,u_\tau\in C(\Om_{\tau_0})$, $w^\tau\ge  w^\ep_h+\tau$, $u_\tau\le u^\ep-\tau$, $w^\tau\to w^\ep_h$ and $u_\tau\to u^\ep$ as $\tau\to 0$ uniformly  in 
$\Om_{\tau_0}-h\sigma$ and in $\Om_{\tau_0}$ respectively.
\item[b)] For any  $x\in \Om_{\tau_0}\cap (\Om_{\tau_0}-h\sigma)$ there exists a concave  (resp., convex) paraboloid of opening $2/\tau$ that touches $w^\tau$ (resp., $u_\tau$) by below (resp., above) at $x$.
In particular, $w^\tau$ and $u_\tau$ are punctually second order differentiable almost everywhere in  $ \Om_{\tau_0}\cap (\Om_{\tau_0}-h\sigma)$, meaning that, for a.e. $x_0 \in \Omega_{\tau_{0}}\cap (\Om_{\tau_0}-h\sigma)$ there exist paraboloids $P_w$ and $P_u$,  such that, $w^\tau(x)= P_w(x) +o(|x-x_0|^2)$ and $u_\tau(x)= P_u(x) +o(|x-x_0|^2)$ as $x \rightarrow x_0$. 
\item[c)] If  $x^\tau\in\Om_{\tau_0}-h\sigma$is such that $w^\tau(x)=w_h^\ep(x^\tau)+\tau-\frac{1}{\tau}|x-x^\tau|^2$, then 
$$\frac{1}{\tau}|x-x^\tau|^2\leq w_h^\ep(x^\tau)-w_h^\ep(x).$$
If $x_\tau\in\Om_{\tau_0}$ is such that $u_\tau(x)=u^\ep(x_\tau)-\tau+\frac{1}{\tau}|x-x_\tau|^2$, then 
$$\frac{1}{\tau}|x-x_\tau|^2\leq u^\ep(x)-u^\ep(x_\tau).$$
\item[d)] There exists $\tau_1>0$ such that for any $\tau<\tau_1$, $w^\tau$ is  a viscosity (and therefore a.e.) subsolution to 
$$G_\ep\left(w^\tau(x)-\tau+\frac{1}{\tau}|x-x^\tau|^2-\kappa|x^\tau|^2,D^2 w^\tau(x)\right)=2\kappa\lambda n,\quad x\in\Om_{2\tau_0}-h\sigma$$ and 
$u_\tau$ is a viscosity (and therefore a.e.)  supersolution to 
$$G_\ep\left(u_\tau(x)+\tau-\frac{1}{\tau}|x-x_\tau|^2,D^2 u_\tau(x)\right)=0,\quad x\in\Om_{2\tau_0}.$$
\end{itemize}
\noindent{\em Proof of Claim 2:} For the proofs of (a)-(c) see Theorem  5.1 and Lemma 5.2 in  \cite{caffarelli_cabre_1995}. 
 Note that by (c), 
$$\frac{1}{\tau}|x-x^\tau|^2\leq w_h^\ep(x_\tau)-w_h^\ep(x)\leq C.$$ 
Since $w_h^\ep$ is continuous this implies that
\begin{equation}\label{x^tautox}\frac{1}{\tau}|x-x^\tau|^2\to0\quad\text{as }\tau\to0.
\end{equation}
Similarly,
\begin{equation}\label{x_tautox}\frac{1}{\tau}|x-x_\tau|^2\to0\quad\text{as }\tau\to0.
\end{equation}
To prove (d), let $x_0\in\Om_{2\tau_0}-h\sigma$ and let $P(x)$ be a paraboloid that touches by above $w^\tau$ at $x_0$. Consider the paraboloid
$$Q(x)=P(x+x_0-x_0^\tau)+\frac{1}{\tau}|x_0-x_0^\tau|^2-\tau.$$
By \eqref{x^tautox}, we can pick $\tau_1>0$ independent of $x_0$ such that $x_0^\tau\in\Om_{\tau_0}-h\sigma$ for any  $\tau<\tau_1$. Take any $x$ sufficiently close to $x_0^\tau$, so that  
$x+x_0-x_0^\tau\in \Om_{\tau_0}-h\sigma$, then, by definition of $w^\tau$, 
$$w_h^\ep(x)\leq w^\tau(x+x_0-x_0^\tau)+\frac{1}{\tau}|x_0-x_0^\tau|^2-\tau\leq Q(x).$$
Moreover, $w_h^\ep(x_0^\tau)=Q(x_0^\tau)$, since $w^\tau(x_0)=P(x_0).$ Hence $Q$ touches $w_h^\ep$ by above at $x_0^\tau$ and by \eqref{strictsubsolution},
$$2\kappa\lambda n\leq G_\ep(w_h^\ep(x_0^\tau)-\kappa|x_0^\tau|^2,D^2 Q(x_0^\tau))=G_\ep\left(w^\tau(x_0)-\tau+\frac{1}{\tau}|x_0-x_0^\tau|^2-\kappa|x_0^\tau|^2,D^2 P(x_0)\right).$$
Similarly one can prove the second viscosity inequality in (d). This concludes the proof of Claim 2.

\medskip
Let us continue the proof of the theorem.  We have assumed that \eqref{contradiction} is true. If the supremum in \eqref{contradiction} is attained at $\bar{x}$, 
then both $\bar{x}$ and $\bar{x}+\sigma h$ have to be in the interior  of $\Omega$, for otherwise we would have  $u^{\ep}(\bar{x}+ h \sigma ) - u^{\ep}(\bar{x} )\geq C_0 h + \delta h -\kappa |\bar{x}|^2 \geq C_0 h + \delta h -\kappa C(\Omega) $ which contradicts Claim 1 for $\kappa < \delta h/ C(\Omega)$. Thus, there exists
$\tau_0>0$ such that
$$\sup_{x+h\sigma,x\in\Om}\frac{w_{h}^\ep(x)-u^\ep(x)}{h}=\sup_{x+h\sigma,x\in\Om_{3\tau_0}}\frac{w_{h}^\ep(x)-u^\ep(x)}{h}\geq C_0+\delta.$$ 
For $\tau$ small enough, by (a) of Claim 2, there exists $x_0\in \Om_{\frac{5\tau_0}{2}}\cap\big(\Om_{\frac{5\tau_0}{2}}-h\sigma\big)$ such that  \begin{equation}\label{maxequa}\sup_{x+h\sigma,x\in\Om_{\tau_0}}\frac{w^\tau(x)-u_\tau(x)}{h}=\frac{w^\tau(x_0)-u_\tau(x_0)}{h}=M\geq C_0.\end{equation}
Take $s>0$ and  $r<\tau_0/2$ small enough so that $B_r(x_0)\subset \Om_{2\tau_0}\cap\big(\Om_{2\tau_0}-h\sigma\big)$ and define
$$v(x):=u_\tau(x)-w^\tau(x)+Mh+s|x-x_0|^2-sr^2.$$
Then  $v$ has a strict minimum at $x_0$, moreover
\begin{equation}\label{toapplyconvexenv}v(x)\geq 0\quad\text{on }\partial B_r(x_0)\quad\text{and}\quad v(x_0)=-sr^2<0.\end{equation}
Let us denote by $\Gamma_v$ the convex envelope of $-v^-$ in $B_{2r}(x_0)$, where we have extended $v\equiv 0$ outside $B_r(x_0)$. 
 Here we use standard techniques from the theory of viscosity solutions, see \cite{caffarelli_cabre_1995}. Since we do not know if
$w^\tau$ and $u_\tau$ are twice differentiable at $x_0$, we introduce the convex envelope in order to find a point $x_1$ of twice differentiability for both 
$w^\tau$ and $u_\tau$ such that $w^\tau(x_1)>u_\tau(x_1)$ and $D^2w^\tau(x_1)\leq D^2u_\tau(x_1)$+small corrections.
We have that $\Gamma_v\leq 0$ in  $B_{2r}(x_0)$. 
By (b) of Claim 2, for any $x\in\overline{B_r(x_0)}\cap\{v=\Gamma_v\}$ there exists a convex paraboloid
with opening independent of $x$ that touches $\Gamma_v$ by above. By Lemma 3.5 in  \cite{caffarelli_cabre_1995}, $\Gamma_v\in C^{1,1}(B_r(x_0))$ and 
$$\int_{B_r(x_0)\cap\{v=\Gamma_v\}}\text{det}D^2\Gamma_v\,dx>0.$$
In particular $|B_r(x_0)\cap\{v=\Gamma_v\}|>0$. 
Since $w^\tau$ and $u_\tau$ are second order differentiable almost everywhere in $B_r(x_0)$, there exists $x_1\in B_r(x_0)\cap\{v=\Gamma_v\}$ such that $w^\tau$ and $u_\tau$ are second order differentiable at $x_1$ and by  (d) of Claim 2,
\begin{equation}\label{GWtau}G_\ep\left(w^\tau(x_1)-\tau+\frac{1}{\tau}|x_1-x_1^\tau|^2-\kappa| x_1^\tau|^2, D^2w^\tau(x_1)\right)\ge 2\kappa\lambda n\end{equation} and 
\begin{equation}\label{GUtau}G_\ep\left(u_\tau(x_1)+\tau-\frac{1}{\tau}|x_1-(x_1)_\tau|^2,D^2 u_\tau(x_1)\right)\le0.\end{equation} 
Since $\Gamma_v$ is convex, $\Gamma_v\leq v$ and $\Gamma_v(x_1)=v(x_1)$, we have that $D^2v(x_1)\geq 0$, i.e., 
\begin{equation}\label{negativematrixmax}D^2 w^\tau(x_1)\leq D^2 u_\tau(x_1)+2sI_n.\end{equation}
 Moreover, since $\Gamma_v$ is negative in $B_{r}(x_0)$, we have that
\begin{equation*}
w^\tau(x_1)>u_\tau(x_1)+s|x_1-x_0|^2-sr^2+Mh.\end{equation*}
 In particular, for $s$ and $r$ small enough
\begin{equation}\label{wx1vx1}w^\tau(x_1)>u_\tau(x_1)+\frac{Mh}{2}.\end{equation}
Let us denote $\varphi^\tau(x_1):=-\tau+\frac{1}{\tau}|x_1-x_1^\tau|^2$ and 
$\varphi_\tau(x_1):=\tau-\frac{1}{\tau}|x_1-(x_1)_\tau|^2$.
Then, by subtracting the  inequalities \eqref{GWtau} and  \eqref{GUtau}, we get
\begin{equation*}\begin{split}
2\kappa\lambda n&\leq G_\ep\left(w^\tau(x_1)+\varphi^\tau(x_1)-\kappa| x_1^\tau|^2,D^2 w^\tau(x_1)\right)-G_\ep\left(u_\tau(x_1)+\varphi_\tau(x_1),D^2 u_\tau(x_1)\right)\\&
=H_\ep\left(w^\tau(x_1)+\varphi^\tau(x_1)-\kappa| x_1^\tau|^2\right)\puccin( w^\tau)(x_1)\\&+\left(1-H_\ep\left(w^\tau(x_1)+\varphi^\tau(x_1)-\kappa| x_1^\tau|^2\right)\right)\puccip( w^\tau)(x_1)\\&
-H_\ep\left(u_\tau(x_1)+\varphi_\tau(x_1)\right)\puccin(u_\tau)(x_1)-(1-H_\ep\left(u_\tau(x_1)+\varphi_\tau(x_1)\right))\puccip(u_\tau)(x_1).
\end{split}
\end{equation*}
Adding and subtracting 
$$H_\ep\left(u_\tau(x_1)+\varphi_\tau(x_1)\right)\puccin( w^\tau)(x_1)$$ and 
$$[1-H_\ep\left(u_\tau(x_1)+\varphi_\tau(x_1)\right)]\puccip( w^\tau)(x_1),$$
in the right hand-side of the inequality above, we obtain

\begin{equation*}\begin{split}
2\kappa\lambda n&
\leq [H_\ep\left(w^\tau(x_1)+\varphi^\tau(x_1)-\kappa| x_1^\tau|^2\right)-H_\ep\left(u_\tau(x_1)+\varphi_\tau(x_1)\right)]\puccin( w^\tau)(x_1)\\&
+H_\ep\left(u_\tau(x_1)+\varphi_\tau(x_1)\right)\puccin( w^\tau)(x_1)\\&
-[H_\ep\left(w^\tau(x_1)+\varphi^\tau(x_1)-\kappa| x_1^\tau|^2\right)-H_\ep\left(u_\tau(x_1)+\varphi_\tau(x_1)\right)]\puccip( w^\tau)(x_1)\\&
+[1-H_\ep\left(u_\tau(x_1)+\varphi_\tau(x_1)\right)]\puccip( w^\tau)(x_1)\\&
-H_\ep\left(u_\tau(x_1)+\varphi_\tau(x_1)\right)\puccin(u_\tau)(x_1)-(1-H_\ep\left(u_\tau(x_1)+\varphi_\tau(x_1)\right))\puccip(u_\tau)(x_1)\\&
=[H_\ep\left(w^\tau(x_1)+\varphi^\tau(x_1)-\kappa| x_1^\tau|^2\right)-H_\ep\left(u_\tau(x_1)+\varphi_\tau(x_1)\right)]\puccin( w^\tau)(x_1)\\&
-[H_\ep\left(w^\tau(x_1)+\varphi^\tau(x_1)-\kappa| x_1^\tau|^2\right)-H_\ep\left(u_\tau(x_1)+\varphi_\tau(x_1)\right)]\puccip( w^\tau)(x_1)\\&
+H_\ep\left(u_\tau(x_1)+\varphi_\tau(x_1)\right)[\puccin( w^\tau)(x_1)-\puccin( u_\tau)(x_1)]\\&
+[1-H_\ep\left(u_\tau(x_1)+\varphi_\tau(x_1)\right)][\puccip( w^\tau)(x_1)-\puccip( u_\tau)(x_1)]\\&
\leq[H_\ep\left(w^\tau(x_1)+\varphi^\tau(x_1)-\kappa| x_1^\tau|^2\right)-H_\ep\left(u_\tau(x_1)+\varphi_\tau(x_1)\right)][\puccin( w^\tau)(x_1)-\puccip( w^\tau)(x_1)]\\&
+\mathcal{M}^+( w^\tau- u_\tau)(x_1).
\end{split}
\end{equation*}
We have obtained
\begin{equation}\label{chianineqfres}\begin{split}
2\kappa\lambda n&\leq
[H_\ep\left(w^\tau(x_1)+\varphi^\tau(x_1)-\kappa| x_1^\tau|^2\right)-H_\ep\left(u_\tau(x_1)+\varphi_\tau(x_1)\right)][\puccin( w^\tau)(x_1)-\puccip( w^\tau)(x_1)]\\&
+\mathcal{M}^+( w^\tau- u_\tau)(x_1)
\end{split}
\end{equation}
Now, by \eqref{x^tautox} and \eqref{x_tautox} we have that $\varphi^\tau(x_1),\varphi_\tau(x_1)\to 0$ as $\tau\to 0$. This,  combined with \eqref{wx1vx1}, yields
$$w^\tau(x_1)+\varphi^\tau(x_1)-\kappa| x_1^\tau|^2>u_\tau(x_1)+\varphi_\tau(x_1)$$ 
 for $s, \kappa, \tau$ small enough.  Since $H_\ep$ is non-decreasing and $\puccin( w^\tau)(x_1)-\puccip( w^\tau)(x_1)\leq 0$, we infer that
$$[H_\ep\left(w^\tau(x_1)+\varphi^\tau(x_1)-\kappa| x_1^\tau|^2\right)-H_\ep\left(u_\tau(x_1)+\varphi_\tau(x_1)\right)][\puccin( w^\tau)(x_1)-\puccip( w^\tau)(x_1)]\leq 0.$$
Next, from $\eqref{negativematrixmax}$,
$$\mathcal{M}^+( w^\tau- u_\tau)(x_1)\leq 2sn\Lambda.$$
Plugging the last two inequalities into \eqref{chianineqfres}, we obtain
$$2\kappa\lambda n\leq 2sn\Lambda,$$ which is a contradiction for $s$ small enough ($s<\kappa\lambda/\Lambda$).

We have proven that for any $\delta>0$,

$$\sup_{x,x+h\sigma\in\Om}\frac{u^\ep(x+h\sigma )-u^\ep(x)}{h}\leq C_0+\delta.$$
Letting $\delta$ go to 0, we get \eqref{goal}. 

Note that comparing $u^\ep$ with the sub and supersolution introduced in Claim 1, we infer that there exists $C_1>0$ independent of $\ep$ such that 
$$\|u^\ep\|_{L^\infty(\Om)}\le C_1.$$
This bound combined with \eqref{goal} yields a uniform in $\ep$ estimate of the Lipschitz norm of the solution $u^\ep$ of  \eqref{ep problem}.
Thus the theorem is proven.
\end{proof}

\begin{thm}
\label{existence}
Under the assumptions of Theorem \ref{lipschthm},
there exists a continuous  viscosity solution $u^\ep$ of the $\ep$-problem \eqref{ep problem}. 
Moreover, 
\begin{equation}\label{puccinleqpuccis}\Fn(u^\ep)\leq0\leq\Fp(u^\ep)\end{equation} in the viscosity sense in $\Om.$
\end{thm}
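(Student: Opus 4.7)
The plan is to prove existence via a fixed-point argument on the $u$-dependence inside $H_\ep(u)$, then to deduce \eqref{puccinleqpuccis} as an immediate algebraic consequence of the viscosity definition of $G_\ep(u^\ep)=0$.

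First, I would set up barriers. Let $\psi$ and $\varphi$ be the viscosity solutions of $\ppuccin(\psi)=0$ and $\ppuccip(\varphi)=0$ in $\Om$ with boundary data $f$; these exist and belong to $C(\overline{\Om})$ by the standard theory for the Pucci extremal operators on smooth domains with Lipschitz data. Since $G_\ep(u,M):=H_\ep(u)\Fn(M)+(1-H_\ep(u))\Fp(M)$ is a convex combination of $\Fn(M)$ and $\Fp(M)$ with weights in $[0,1]$, the pointwise bound $\Fn(M)\leq G_\ep(u,M)\leq \Fp(M)$ holds for all $(u,M)$. Combined with Proposition~\ref{Fnproperties}(d), this shows that $\psi$ is a viscosity subsolution and $\varphi$ a viscosity supersolution of $G_\ep(u)=0$, and $\psi\leq\varphi$ in $\overline{\Om}$.

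Next, for $v\in K:=\{v\in C(\overline{\Om}) : \psi\leq v\leq\varphi,\ v|_{\partial\Om}=f\}$, define $T(v)$ as the unique viscosity solution of $L_v(u)=0$ in $\Om$, $u=f$ on $\partial\Om$, where $L_v(u):=H_\ep(v(x))\Fn(D^2 u)+(1-H_\ep(v(x)))\Fp(D^2 u)$. For fixed $v$, $L_v$ is a uniformly elliptic operator with continuous $x$-dependence and no first- or zero-order terms, so existence and uniqueness follow from standard Perron's method and comparison for uniformly elliptic operators (cf.\ \cite{caffarelli_cabre_1995,crandall_user_1992}); comparison against $\psi,\varphi$ gives $T(K)\subset K$. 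Interior $C^{1,\alpha}$ estimates for uniformly elliptic equations together with the uniform boundary modulus provided by $\psi,\varphi$ yield equicontinuity of $\{T(v)\}_{v\in K}$ on $\overline{\Om}$, so $T(K)$ is precompact in $C(\overline{\Om})$. Continuity of $T$ on $K$ follows from the uniform convergence $H_\ep(v_n)\to H_\ep(v)$ whenever $v_n\to v$ uniformly, combined with the stability of viscosity solutions under such coefficient perturbations. Schauder's fixed-point theorem then produces $u^\ep\in K$ with $T(u^\ep)=u^\ep$, which by construction is a continuous viscosity solution of \eqref{ep problem}.

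Finally, for \eqref{puccinleqpuccis}: if $P$ is a $C^2$ test function touching $u^\ep$ from above at $x_0\in\Om$, the viscosity subsolution condition for $G_\ep(u^\ep)=0$ reads $aS_n+(1-a)S_p\geq 0$, where $a:=H_\ep(u^\ep(x_0))\in[0,1]$, $S_n:=\Fn(D^2 P(x_0))$, and $S_p:=\Fp(D^2 P(x_0))$. Since $S_n\leq S_p$ by Proposition~\ref{Fnproperties}(d), this forces $S_p\geq 0$, so $\Fp(u^\ep)\geq 0$ in the viscosity sense; the inequality $\Fn(u^\ep)\leq 0$ is symmetric via test functions touching from below. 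I expect the main technical point to be the compactness and continuity verifications required by Schauder's theorem, especially equicontinuity of $\{T(v)\}$ up to the boundary uniformly in $v$ and stability of viscosity solutions under continuous coefficient perturbations; both are standard but require care.
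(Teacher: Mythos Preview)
Your strategy—freeze the $u$-dependence in $H_\ep$, solve the resulting problem, and find a fixed point of the map $T$—is exactly the paper's approach. The paper runs the argument via the Leray--Schauder theorem on $\Theta=C^{0,\alpha}(\overline{\Om})$ with $\alpha\in(1/2,1)$, the required a priori bound coming from Theorem~\ref{lipschthm}; you instead use Schauder on a barrier-bounded subset of $C(\overline{\Om})$. Your derivation of \eqref{puccinleqpuccis} is also the same as the paper's, just spelled out rather than referred to Remark~\ref{forcomparisson}.

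There is, however, a genuine technical gap in your version. You assert that for fixed $v\in C(\overline{\Om})$ the frozen operator
\[
L_v(x,M)=H_\ep(v(x))\,\Fn(M)+(1-H_\ep(v(x)))\,\Fp(M)
\]
enjoys comparison, hence uniqueness, so that $T$ is single-valued. But $L_v$ is neither concave nor convex in $M$, and its $x$-dependence is only as regular as $v$; for such operators comparison in the viscosity sense is \emph{not} a consequence of uniform ellipticity alone, and the references you cite do not cover this case. The paper addresses exactly this point by working in $C^{0,\alpha}(\overline{\Om})$ with $\alpha>1/2$ and invoking \cite[Theorem~III.1]{ishiilions} for comparison; the exponent threshold is not cosmetic. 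With $v$ merely continuous, $T$ may fail to be well-defined and Schauder cannot be applied. The fix is straightforward—run your argument in $C^{0,\alpha}$ with $\alpha>1/2$, using the global H\"older estimates of \cite{ishiilions} both for compactness and to stay in the space—but as written your proposal skips the one nontrivial analytic issue in the existence proof.
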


\begin{proof}
We fix $\ep> 0$. For $\alpha\in(1/2,1)$, let  $\Theta:=C^{0,\alpha}(\overline{\Om})$ be the Banach space of $\alpha$-\holder continuous functions defined on 
$\overline{\Omega}$. 
Let $T$ be the operator  defined in the following way, for $u\in\Theta$,
$$T (u )= v $$
if 
$v$ is  the  viscosity solution of 
\begin{equation}
\begin{cases}
\label{T}
H_\ep(u)\, \puccin (v)+(1-H_\ep(u))\,\puccip (v) =0 &\quad \mbox{in} \:\Omega   \\
v = f &\quad \mbox{on} \:\p\Omega. 
\end{cases}
\end{equation}
 Note  that $T$ is well defined. Indeed by Proposition \ref{Fnproperties} the operator $G_{\color{red} \ep}(x,v):=H_\ep(u)\, \puccin (v)+(1-H_\ep(u))\,\puccip (v)$ is uniformly elliptic.
 Moreover, since $u\in C^{0,\alpha}(\overline{\Om})$ with $\alpha>1/2$, $G_{\color{red} \ep}(x,v)$ satisfies the comparison principle, see  \cite[Theorem III.1]{ishiilions}.
 Let $\psi$ and $\varphi$ be the   solutions of 
  \begin{equation*}
\begin{cases}
\ppuccin(\psi) =0 &\quad \mbox{in} \:\Omega   \\
\psi = f &\quad \mbox{on} \:\p\Omega 
\end{cases}
\quad
\text{and}\quad
\begin{cases}
\ppuccip(\varphi) =0 &\quad \mbox{in} \:\Omega   \\
\varphi = f &\quad \mbox{on} \:\p\Omega. 
\end{cases}
\end{equation*}
Then, $\psi$ and $\varphi$ are respectively  sub and supersolution of \eqref{T}.  Thus, by the Perron's method, there exists a unique viscosity solution of \eqref{T}.

 Observe that if $T $ has a fixed point $u^{\ep}$, that is  $T (u^{\ep})=u^{\ep} $, then     
$u^\ep$ is solution to  \eqref{ep problem}.
Moreover, by Remark \ref{forcomparisson}, \eqref{puccinleqpuccis} also follows.
We will prove that we can apply the Leray-Schauder fixed point  theorem  \cite[Theorem 11.3]{gilbarg_elliptic_2001}  and conclude that $T$ has a fixed point, which concludes the proof.
We have:
\begin{enumerate}
\item ${\it{ T(\Theta) \subset \Theta:}} $ 
Let $v=T(u)$, then regularity theory   implies that $v \in C^{0,\beta}(\overline{\Omega})$, for any $\beta\in (0,1)$, see   \cite[Theorem VII.1]{ishiilions}. 
In particular $v\in\Theta$.

\item {\it $T$ is continuous:} Let  $\{u_n \} \subset \Theta$ be  such
that $u _n\rightarrow \bar u$ in $\Theta$.
We need to prove that $v_n:=T(u_n)\to \bar v:=T( \bar u)$ in  $\Theta$.
By the \holder estimates for the solutions $v_n$,  \cite[Theorem VII.1]{ishiilions}, 
 we have that
$\|v_n\|_{C^{0,\beta}(\overline{\Om})}\leq C$ for $\beta>\alpha$. Since  the subset of $\Theta$  of $\beta$-H\"older continuous functions on $\overline{ \Om}$ is precompact in $\Theta$, we can extract from $\{v_n\}$ a convergent subsequence.
Let $\{v_{n_k}\}$ be any convergent subsequence, $v_{n_k}\to w$ as $k\to+\infty$  in $\Theta$, then by  the stability of viscosity solutions  under uniform convergence, it follows that 
$w$ solves \eqref{T} with $u=\bar u$ , that is $w=T(\bar u)=\bar v$. Since every convergent subsequence converges to the same limit function $\bar v$, we have that the full sequence 
$\{v_n\}$  converges to $\bar v$ in $\Theta.$

\item {\it $T $ is compact:} By  the H\"older estimates, T maps bounded set of $\Theta$ into bounded sets of $C^{0,\beta}(\overline{\Om})$, $\beta>\alpha$ which are precompact in $\Theta$.
\item {\it There exists $M>0$ such that $\|u\|_\Theta<M$ for all $u\in\Theta$ and $\sigma\in[0,1]$ satisfying $u=\sigma T(u)$:
the equation $u=\sigma T(u)$ is equivalent to the Dirichlet problem 
\begin{equation*}
\begin{cases}
H_\ep(u)\, \puccin (u)+(1-H_\ep(u))\,\puccip (u) =0 &\quad \mbox{in} \:\Omega   \\
u = \sigma f &\quad \mbox{on} \:\p\Omega. 
\end{cases}
\end{equation*}
The estimate $\|u\|_\Theta<M$, for some $M>0$, then follows from Theorem \ref{lipschthm}.
}
\end{enumerate}

This concludes the proof of the existence of a fixed point $u^\ep$ and thus of  a solution of \eqref{ep problem} satisfying  \eqref{puccinleqpuccis}.

\end{proof}

\subsection{Proof of Theorem \ref{existencethm}}
By Theorem \ref{existence}, for any $\ep>0$ there exists $u^\ep$   viscosity solution of  \eqref{ep problem}, satisfying also \eqref{puccinleqpuccis}.
By Theorem \ref{lipschthm}  the sequence $\{u^\ep\}$ is uniformly Lipschitz continuous, thus by the Ascoli-Arzel\`{a} Theorem there exists a subsequence of 
 $\{u^\ep\}$ uniformly convergent to a Lipschitz function $u$ solution to \eqref{limitproblemk=2}.
 
 If $f^+,\,f^-\not\equiv0$, then by the Lipschitz regularity of $u$ up to the boundary of $\Omega$, we have $u_1=u^+\not\equiv0$ and  $u_2=u^-\not\equiv0$.


\section{Non-degeneracy at regular points}\label{regulapointssec}

In this section we introduce the definition of regular points for $u$ solution of \eqref{limitproblemk=2}. These are points where at least one among $u_1$ and $u_2$ has linear growth away from the free boundary, where here and  throughout this section we will use the notation introduced in \eqref{pos neg}.

\begin{defn}
\label{firstdefinition}
Let $u$ be a solution of problem \eqref{limitproblemk=2}. Consider $x_0 $ a point on the free boundary $\Gamma$. 
We say that $x_0$ is a {\em regular point} if there exist positive constants $\tilde{r}=\tilde{r} (x_0)$  and $M=M(x_0)$ such that 
\begin{equation}\label{regularpointconfdef}
\sup_{B_{r}(x_0)}U\geq Mr,
\end{equation}
for every $0<r < \tilde{r}$, where 
$$U(x):=\max\{u_1(x),u_2(x)\}=|u(x)|.$$
 Otherwise, we say that $x_0$ is a {\em singular point}. 
\end{defn}


\begin{lem}\label{onesidepointsregular} 
 Let $u$ be a solution of problem \eqref{limitproblemk=2}. If $\Ga$ has a ball from one side at $x_0\in \Ga$, that is there exists a ball $B_{r_0}(y)$ contained inside  the support of either $u_1$ or $u_2$, such that $x_0\in\partial  B_{r_0}(y)$, then $x_0$ is a regular point. 
\end{lem}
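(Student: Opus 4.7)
My plan is to prove non-degeneracy by a classical Hopf-type barrier argument inside the tangent ball.

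Without loss of generality, assume $B_{r_0}(y) \subset \Omega(u_1)$ (the case $B_{r_0}(y) \subset \Omega(u_2)$ is identical, working with $\Fp$ and $-u_2$ instead). Since $x_0 \in \partial B_{r_0}(y) \cap \Gamma$ and $u$ is continuous, $u_1(x_0)=0$, while $u_1$ is a classical (or viscosity) solution of $\Fn(u_1)=0$ in $B_{r_0}(y)$ because there $u_1 = u$. I will produce a radial subsolution that vanishes on $\partial B_{r_0}(y)$ but has a strictly positive inward normal derivative at $x_0$, then compare it with $u_1$ in an annulus.

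Concretely, set $\nu := (y-x_0)/r_0$ (the inward normal at $x_0$) and, for a parameter $\alpha>0$ to be chosen, define in the annulus $A := B_{r_0}(y) \setminus \overline{B_{r_0/2}(y)}$ the function
\begin{equation*}
\phi(x) := e^{-\alpha |x-y|^2} - e^{-\alpha r_0^2}.
\end{equation*}
A direct computation shows that, writing $\rho=|x-y|$, the Hessian $D^2\phi$ has one radial eigenvalue $(4\alpha^2\rho^2-2\alpha)e^{-\alpha\rho^2}$ and $n-1$ tangential eigenvalues $-2\alpha e^{-\alpha\rho^2}$. For $\rho\ge r_0/2$ the radial eigenvalue is positive provided $\alpha$ is large, so by item (d) of Proposition \ref{Fnproperties}
\begin{equation*}
\Fn(\phi) \ge \ppuccin(\phi) = 2\alpha e^{-\alpha\rho^2}\bigl[2\lambda\alpha\rho^2 - \lambda - \Lambda(n-1)\bigr],
\end{equation*}
which is strictly positive in $A$ once $\alpha > 2\bigl(\lambda+\Lambda(n-1)\bigr)/(\lambda r_0^2)$. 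Thus $\phi$ is a strict classical subsolution of $\Fn$ in $A$.

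Now fix such an $\alpha$. Since $u_1>0$ is continuous on the compact set $\partial B_{r_0/2}(y)\subset \Omega(u_1)$, we have $m := \inf_{\partial B_{r_0/2}(y)} u_1 > 0$; pick $c\in(0,1)$ so that $c\phi \le m$ on $\partial B_{r_0/2}(y)$. Then $c\phi \le u_1$ on $\partial A$ (zero on $\partial B_{r_0}(y)$), $\Fn(c\phi)>0=\Fn(u_1)$ in $A$, and the comparison principle for $\Fn$ (Remark \ref{FpFnremark}) yields
\begin{equation*}
u_1(x) \ge c\,\phi(x) \qquad \text{for all } x\in A.
\end{equation*}

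Finally, I evaluate this estimate along the inward normal ray from $x_0$. For $0<r\le r_0/2$, the point $x_0+r\nu$ lies in $A$ with $|x_0+r\nu-y|=r_0-r$, so Taylor-expanding $\phi$ gives
\begin{equation*}
u_1(x_0+r\nu) \ge c\bigl(e^{-\alpha(r_0-r)^2}-e^{-\alpha r_0^2}\bigr) = c\,e^{-\alpha r_0^2}\bigl(2\alpha r_0\, r + O(r^2)\bigr).
\end{equation*}
Hence for $\tilde r=\tilde r(x_0)>0$ small enough and $M := c\alpha r_0\, e^{-\alpha r_0^2}>0$, one has $u_1(x_0+r\nu)\ge Mr$ for all $0<r<\tilde r$. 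Since $x_0+r\nu\in B_r(x_0)$, this forces $\sup_{B_r(x_0)} U \ge Mr$, proving that $x_0$ is regular. The only real technical point is the eigenvalue computation picking $\alpha$ large; everything else is a straightforward comparison.
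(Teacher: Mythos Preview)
Your proof is correct and follows essentially the same strategy as the paper: build a radial Hopf-type barrier in the annulus $B_{r_0}(y)\setminus \overline{B_{r_0/2}(y)}$, compare with $u_1$, and read off linear growth along the inward normal. The only cosmetic difference is the choice of barrier: you use the exponential $e^{-\alpha|x-y|^2}-e^{-\alpha r_0^2}$ (a strict $\ppuccin$-subsolution for $\alpha$ large), while the paper uses the power function $\frac{M_1}{2^\gamma-1}\bigl(r_0^\gamma|x-y|^{-\gamma}-1\bigr)$, which is an exact $\ppuccin$-solution. The paper also obtains the inner boundary value $M_1=Cu_1(y)$ via Harnack rather than by compactness; this yields an explicit constant $M=\dfrac{u_1(y)\,C\gamma}{r_0(2^\gamma-1)}$ and the pointwise bound $u_1(x)\ge M\,d(x,\partial B_{r_0}(y))$ valid throughout the annulus, both of which are reused verbatim in the proof of Lemma~\ref{lemma: lower bound J}. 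Your Taylor-expansion step only gives the estimate for $r$ below some $\tilde r\le r_0/2$, which is enough for the statement of the lemma but would need a small upgrade if you later want the global linear lower bound in the annulus.
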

\begin{proof}
Indeed, suppose, without loss of generality,  that $B_{r_0}(y)\subset \Omega(u_1)$. By \eqref{limitproblemk=2} $u_1$ is solution of $\puccin (u_1)= 0$  in $B_{r_0}(y)$. Then by the Harnack inequality we have that $u_1(x)\geq M_1$, for any $x\in  \overline{B}_{\frac{r_0}{2}}(y)$ with  $M_1=Cu_1(y)$, where $C$ is a universal constant. Let $\phi$ be the solution of 
$$\begin{cases}
\ppuccin (\phi) = 0&\text{in }B_{r_0}(y)\setminus B_{\frac{r_0}{2}}(y)\\
\phi=M_1&\text{on }\partial B_{\frac{r_0}{2}}(y)\\
\phi=0&\text{in } \partial B_{r_0}(y),
\end{cases}
$$ that is,   $\phi(x)= M_1 \frac{1}{2^\gamma-1}\left(\frac{r_0^{\gamma}}{|x-y|^\gamma}- 1\right)$, where $\gamma = \Lambda(n-1)/\lambda-1$ and $\lambda$ and $\Lambda$ are the elliptic constants of the Pucci's operator $\puccin $ (see Lemma \ref{fondamentalsollem} in Appendix).
Then, since in  $B_{r_0}(y)\setminus B_{\frac{r_0}{2}}(y)$ 
$$ \puccin(\phi) \ge \ppuccin (\phi)=0= \puccin (u_1)$$ 
and $u_1 \geq \phi$ on $\partial B_{r_0}(y) \cup \partial B_{r_0/2}(y)$,  the comparison principle  and  (iii) of Lemma \ref{fondamentalsollem} imply that  for any $x\in B_{r_0}(y)\setminus B_{\frac{r_0}{2}}(y)$,
$$u_1(x)\ge \phi(x)\geq  \frac{M_1 \gamma}{r_0 (2^{\gamma}-1)} d(x,\partial B_{r_0} (y)).$$
Hence,   for any $r < \frac{r_0}{2}$,
$$\sup_{B_r(x_0)}u_1\geq  \frac{M_1 \gamma}{r_0 (2^{\gamma}-1)} r=:M r.$$
Therefore, \eqref{regularpointconfdef} holds with $\tilde{r}(x_0)=\frac{r_0}{2}$ and 
$$M=\frac{u_1(y) C\gamma}{r_0(2^{\gamma}-1)}$$ depending only on $x_0$, $n$, $\lambda$ and $\Lambda$. 
%
\end{proof}

\begin{remark} The set of regular points is dense in $\Gamma$. Indeed, since $\Omega (u_1)$ is an open set, the set of points  in $\Gamma$ with an interior tangent ball is dense in  $\Gamma$. To see it,  let $x$ be any point  in $\Gamma$. Let us consider a sequence of points $\{x_k\}$ contained in 
$\Omega (u_1)$ and converging to $x$ as $k\to \infty$. Let $d_k$ be the distance of $x_k$ from  $\Gamma$. Then the balls $B_{d_k}(x_k)$ are contained in 
$\Omega (u_1)$  and there exist points  $y_k \in \Gamma \cap \partial B_{d_k}(x_k)$ where the $x_k$'s realize the distance from  $\Gamma$. The sequence $\{y_k\} \subset \Gamma$ is a sequence of  points  that have a tangent ball from the inside  and converges to $x$.
\end{remark}


 The following lemma states that at regular points both functions $u_1$ and $u_2$ have linear growth away from the free boundary.
\begin{lem}
\label{lemma: both bounds}
Let $u$ be a Lipschitz solution of problem \eqref{limitproblemk=2}  and  let $z\in\Ga$ be  a regular point. Then, there exist $c=c(z)$ and $C$ positive constants such that, for any $ 0< r < \tilde{r}(z)$,
 $$
 c\, r \leq \sup_{B_r (z)} u_i  \leq C\, r \qquad {i=1,2.}
$$
\end{lem}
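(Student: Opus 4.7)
The upper bound is immediate. Since $u$ is Lipschitz on $\overline\Omega$ and $u(z)=0$, for every $x\in B_r(z)$ we have $u_i(x)\le |u(x)|\le L|x-z|\le Lr$, where $L$ is the Lipschitz constant of $u$. So $\sup_{B_r(z)}u_i\le Lr$ with a constant $C=L$ independent of $z$.

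The non‑trivial direction is the lower bound. Fix $i\in\{1,2\}$ (the argument is symmetric; write it for $u_2$). The plan is to argue by contradiction via a blow‑up at $z$, using the Harnack inequality from Remark~\ref{FpFnremark}. Suppose there exists a sequence $r_k\searrow 0$ with $\sigma_k:=\sup_{B_{r_k}(z)}u_2/r_k\to 0$. Set $v_k(x):=u(z+r_kx)/r_k$. The family $\{v_k\}$ is uniformly Lipschitz (with constant $L$) and $v_k(0)=0$, so Ascoli–Arzel\`a furnishes, along a subsequence, a locally uniform limit $v_\infty$ on $\mathbb{R}^n$ with $v_\infty(0)=0$. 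Since $\Fn$ and $\Fp$ are invariant under rescaling (Proposition~\ref{Fnproperties}(a)) and viscosity inequalities are stable under uniform convergence, $v_\infty$ satisfies $\Fn(v_\infty)\le 0\le \Fp(v_\infty)$ on all of $\mathbb{R}^n$.

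For $x\in B_1$ we have $(v_k)^-(x)=u_2(z+r_kx)/r_k\le \sigma_k\to 0$, hence $v_\infty\ge 0$ in $B_1$. Applying the Harnack inequality for nonnegative viscosity solutions of $\Fn\le 0\le \Fp$ (Remark~\ref{FpFnremark}) on $B_{1/4}\Subset B_1$ gives
$$\sup_{B_{1/4}}v_\infty\le C\inf_{B_{1/4}}v_\infty\le C\,v_\infty(0)=0,$$
so $v_\infty\equiv 0$ on $B_{1/4}$. Consequently $\sup_{B_{1/4}}|v_k|\to 0$, i.e.\ $\sup_{B_{r_k/4}(z)}|u|/r_k\to 0$. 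This contradicts the regularity of $z$ (Definition~\ref{firstdefinition}), which for $k$ large yields $\sup_{B_{r_k/4}(z)}|u|\ge M\,r_k/4$.

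Hence $\liminf_{r\to 0^+}\sup_{B_r(z)}u_2/r>0$, so there exist $r_0\in(0,\tilde r(z))$ and $c_0>0$ with $\sup_{B_r(z)}u_2\ge c_0\, r$ for $0<r<r_0$. Since $r\mapsto\sup_{B_r(z)}u_2$ is non‑decreasing, for $r_0\le r<\tilde r(z)$ we also have $\sup_{B_r(z)}u_2\ge \sup_{B_{r_0}(z)}u_2\ge \frac{c_0 r_0}{\tilde r(z)}\,r$, so the lower bound extends to the whole range with constant $c=\min\{c_0,\,c_0r_0/\tilde r(z)\}$. The same reasoning with $u_1$ in place of $u_2$ yields the analogous estimate, completing the proof. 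The main obstacle is controlling the blow‑up $v_\infty$ on a set where it is known to be nonnegative; the key point is that $v_\infty\ge 0$ only needs to hold on $B_1$ (where the assumption $(v_k)^-\to 0$ is available) for Harnack to force $v_\infty$ to vanish in a neighborhood of the origin.
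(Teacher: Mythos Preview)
Your argument is correct and takes a genuinely different route from the paper's. The paper proves the lower bound by a direct barrier construction: assuming $\sup_{B_\rho(z)}u_2<\epsilon\rho$, it uses the regularity of $z$ to find a point $y$ near $z$ with $u_1(y)\ge Mr_\rho$, builds the explicit Pucci fundamental solution $\phi$ centered at $y$ (Lemma~\ref{fondamentalsollem}), shows via comparison that $u_1\ge\phi$ in $\Omega(u_1)\setminus B_{h/2}(y)$ and $-u_2>\phi$ in $\Omega(u_2)\cap B_{2h}(y)$, and then obtains a contradiction with the strong maximum principle for $\puccin(u)\le 0\le\puccin(\phi)$ at the touching point $x_0\in\Gamma$. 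Your proof instead passes to a blow-up limit $v_\infty$, uses stability of the viscosity inequalities $\Fn\le 0\le\Fp$ together with the assumption $(v_k)^-\to 0$ on $B_1$ to make $v_\infty\ge 0$ there, and kills $v_\infty$ on a smaller ball via Harnack; the contradiction then comes directly from Definition~\ref{firstdefinition}. The paper's approach is ``hard'' and yields an explicit constant $c=c(M,n,\lambda,\Lambda)$, while your compactness argument is cleaner and shorter but the constant $c$ it produces is non-explicit. Both rely ultimately on the same two ingredients (the two-sided Pucci inequalities on $\Omega$ and the nondegeneracy built into the definition of regular point), just assembled differently.
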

\begin{proof}

The inequality 
$
\sup_{B_r (z)} u_i \leq C\, r
$
for $i=1,2$ follows from the Lipschitz regularity of $u$.
We prove that if \eqref{regularpointconfdef} holds true for $x_0=z$, then 
\begin{equation}
\label{ndg}
\sup_{B_r (z)} u_i(x) \geq c\, r\quad\text{for any }0<r< \tilde{r}(z)<d(z,\partial\Om),\quad i=1,2,
\end{equation} for some $c=c(M)$.
Assume by contradiction that for    $\ep < \frac{M}{4}$  there exists  $0<\rho < \tilde{r}$ such that, w.l.o.g. 
\begin{equation}
\label{cont ndg}
\sup_{B_\rho (z)} u_2< \ep\, \rho.
\end{equation}
Set $r_\rho: =\frac{\rho}{4}< \tilde{r}$. From \eqref{cont ndg} we have that  $\sup_{B_{r_\rho} (z)} u_2 \leq \sup_{B_{\rho}(z)} u_2 < \ep \rho$ and hence 
\begin{equation}
\label{angel}
\sup_{B_{r_\rho}(z)} u_2 < (4 \ep) \frac{\rho}{4} < M \frac{\rho}{4}=Mr_\rho. 
\end{equation}
 Therefore from \eqref{angel} and the fact that $\sup_{B_{r_\rho} (z)} U \geq M r_\rho$, where $U=\max\{u_1,u_2\}$, we must have
$$\sup_{B_{r_\rho}(z)} u_1 \geq  Mr_\rho;$$ that is, there exists $y \in \overline{B}_{r_\rho} (z)$ such that 
$$
u_1(y)\geq Mr_\rho.
$$ 
Consider the ball centered at $y$ with  radius $h$,  where $h=\abs{y-x_0}$, being $x_0 \in \Ga $ the closest point to $y$ in $\Gamma$. 
Observe that $h \leq \frac{\rho}{4}$. 
Next, the ball $B_{h}(y)$ is contained in $\Om(u_1)$, therefore  by \eqref{limitproblemk=2} 
$\puccin (u_1)= 0$ in $B_{h}(y)$. The Harnack inequality then implies $u_1\geq CM r_{\rho}$ on $\bar{B}_{\frac{h}{2}}(y)$, 
where $C$ is a universal constant. 
Let  $\phi$ be the function defined as follows:
\begin{equation}
\label{barrier}
\phi(x) =  \ds CM\ds\frac{ r_{\rho}}{2^\gamma -1}\left(\ds\frac{{h}^{\gamma}}{\abs{x-y}^{\gamma}}{}-1\right),
\end{equation}
with $\gamma = \frac{\Lambda (n-1) -\lambda}{\lambda}$.  Then, $\phi$ satisfies
\begin{equation*}
\begin{cases}
\ppuccin \left( \phi \right) =0 &  \text{in }\R^n\setminus \{y\}\\
  \phi  = CM \,r_{\rho} & \text{ on } \p B_{\frac{h}{2}} (y)\\
  \phi= 0  & \text{ on }\p B_{h}(y),\\
\end{cases}
\end{equation*} 
see Lemma \ref{fondamentalsollem}. In particular, 
since $u_1\geq \phi$ on $\p B_{h}(y) \cup \p B_{\frac{h}{2}} (y)$, 
 by the comparison principle, $$u_1(x)\geq \phi (x), \qquad x \in    B_{h}(y) \backslash B_{\frac{h}{2}} (y).$$
 The previous inequality still holds in the complement of $B_{h} (y)$ in $\Om(u_1)$, being $\phi$ negative in that set. Therefore, we have that 
 \begin{eqnarray}\label{bound u_1bis}
u(x)= u_1(x)\geq \phi (x)\quad \text{ if }x \in \Om(u_1)\setminus B_{\frac{h}{2}} (y).  
 \end{eqnarray}
To continue the proof, we will 
  prove that $\phi\leq -u_2$ in a neighborhood of $x_0$ in $\Om(u_2)$. If $x\in B_{2h}(y)$ then $d(x,z)\leq d(x,y)+d(y,z)\leq 2h+r_\rho\leq \frac{3}{4}\rho,$ therefore $B_{2h}(y)\subset B_\rho(z)$. In particular, 
 by \eqref{cont ndg}, 
$$
\sup_{B_{2h} (y)} u_2 <  \ep\rho=\ep4r_\rho.
$$
On the other hand, 
$$\phi=-\frac{CM r_{\rho}}{2^\gamma}  \text{ on  }  \p B_{2h}(y)\quad\text{and}\quad  
\phi\leq 0\text{ on }\Gamma.
$$ 
Let $\ep$ be so small that $4\ep\le\frac{CM}{ 2^\gamma}$, then
 $$ \phi\leq -u_2\quad\text{on }\partial (\Om(u_2)\cap B_{2h}(y))\quad\text{ and }\quad\phi<-u_2 \quad\text{on }\Gamma \cap B_{2h}(y).$$
Since in addition,  in the set $\Om(u_2)\cap B_{2h}(y)$ we have
$$\Fn(\phi)\ge \ppuccin \left( \phi \right)= 0\quad\text{and}\quad\puccin(-u_2)=\puccin(u)\leq0,$$
the strong maximum  principle implies 
$$\phi(x)<-u_2(x)\quad\text{for any }x \in \Om(u_2)\cap B_{2h}(y).$$
 Putting all together, by \eqref{bound u_1bis} and the previous inequality, 
  we conclude that for all $x \in B_\frac{h}{2}(x_0) $ the function $u=u_1-u_2$ satisfies 
  \begin{equation}
  \label{bound diff}
u(x)\geq \phi(x),\,u\not\equiv\phi \mbox{  and  }  u(x_0)=\phi(x_0).
  \end{equation}
 This  is in contradiction with  the strong maximum principle, since we know that 
  $\puccin(u)\leq 0\le \Fn(\phi)$ in $B_{\frac{h}{2}}(x_0)$.
 The contradiction has followed by assuming that there exists  $0<\rho < \tilde{r}$ such that \eqref{cont ndg} holds true, with $\ep$ satisfying $\ep<\frac{M}{4}$
 and $4\ep\leq\frac{CM}{2^\gamma}$. Therefore, if we choose  for example
  \begin{equation*}
  c=\frac{1}{2}\min\left\{\frac{M}{4},\frac{CM}{2^{\gamma+2}}\right\},\end{equation*} inequalities \eqref{ndg} hold true.
\end{proof} 

\begin{lem}
\label{lemma: lower bound J}
Let $z \in \Ga $  be a regular point and let $u$ be a Lipschitz solution of problem \eqref{limitproblemk=2}. Then there exists a constant $C=C(z)>0$ such that for every $ 0<r< \tilde{r}$,
$$J_r(u_i,z) \geq C,\quad i=1,2,$$  where $J_r(u_i,z) $ is defined by \eqref{J(ui)}.
\end{lem}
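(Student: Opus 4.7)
The plan is a blow-up contradiction argument driven by the linear non-degeneracy provided by Lemma \ref{lemma: both bounds}. I will first prove $\liminf_{r \to 0^+} J_r(u_i, z) > 0$; a uniform lower bound on the whole interval $(0, \tilde{r})$ will then follow by continuity.

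Suppose for contradiction that there is a sequence $r_k \to 0^+$ with $J_{r_k}(u_i, z) \to 0$. I would introduce the rescalings $v_k(y) := u_i(z + r_k y)/r_k$ on $B_1(0)$. A direct change of variables gives the scale invariance
$$J_1(v_k, 0) = \int_{B_1} \frac{|\nabla v_k|^2}{|y|^{n-2}}\,dy = J_{r_k}(u_i, z) \to 0.$$
Since $u$ is Lipschitz in $\bar{\Omega}$, the $v_k$ are uniformly Lipschitz on $B_1$, nonnegative, and satisfy $v_k(0) = 0$. By Arzel\`a--Ascoli, along a subsequence $v_k \to v_\infty$ uniformly on $\overline{B_1}$, with $v_\infty$ Lipschitz and $v_\infty(0) = 0$.

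To show $v_\infty \equiv 0$, I would fix $\epsilon \in (0,1)$ and work on the annulus $A_\epsilon := B_1 \setminus \overline{B_\epsilon(0)}$, where the weight $|y|^{2-n}$ is bounded. The uniform Lipschitz bound gives $\nabla v_k$ bounded in $L^\infty(B_1)$, so along a further subsequence $\nabla v_k \rightharpoonup \nabla v_\infty$ weakly in $L^2(A_\epsilon)$, and standard lower semicontinuity of the convex quadratic functional with bounded weight yields
$$\int_{A_\epsilon} \frac{|\nabla v_\infty|^2}{|y|^{n-2}}\,dy \;\le\; \liminf_{k} \int_{A_\epsilon} \frac{|\nabla v_k|^2}{|y|^{n-2}}\,dy \;\le\; \liminf_{k} J_1(v_k, 0) \;=\; 0.$$
Sending $\epsilon \to 0$ forces $\nabla v_\infty \equiv 0$ on $B_1$, and combined with $v_\infty(0) = 0$ gives $v_\infty \equiv 0$. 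But Lemma \ref{lemma: both bounds} provides $\sup_{B_{r_k}(z)} u_i \ge c\, r_k$, which after rescaling yields $\sup_{B_1} v_k \ge c > 0$, and uniform convergence forces $\sup_{B_1} v_\infty \ge c$, a contradiction.

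To extend the bound to all of $(0, \tilde{r})$, I would note that for each fixed $r \in (0,\tilde{r})$ non-degeneracy forces $u_i$ to be non-constant on $B_r(z)$ (since $u_i(z)=0$ while $\sup_{B_r(z)} u_i \ge c\, r$), so $J_r(u_i, z) > 0$; and $r \mapsto J_r(u_i, z)$ is continuous by dominated convergence (using the Lipschitz bound on $\nabla u_i$ and the integrability of $|y|^{2-n}$ on $B_{\tilde r}$), hence bounded below by a positive constant on every compact subinterval of $(0, \tilde{r})$. Combined with the blow-up estimate near zero, this produces the uniform $C(z) > 0$ claimed in the lemma. The main technical point is the lower semicontinuity step: the singularity of $|y|^{2-n}$ at the origin prevents a direct application of $L^2$ lower semicontinuity on all of $B_1$, and the annular truncation above is the device that bypasses it.
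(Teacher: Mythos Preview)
Your argument is correct, but it takes a genuinely different route from the paper. The paper gives a direct (non-contradiction) proof: for each $r<\tilde r$ it locates a point $y\in\overline B_{r/4}(z)$ with $u_1(y)\ge cr/4$, lets $x_0\in\Gamma$ be the nearest free-boundary point at distance $h$ (with $h/r$ bounded below by Lipschitz regularity), proves a positive-density estimate for $\{u_1=0\}$ in $B_{h/2}(x_0)$, and then applies a Poincar\'e--Sobolev inequality on that ball together with a barrier lower bound on $u_1$ to estimate $\int_{B_{h/2}(x_0)}|\nabla u_1|^2$ from below. Your compactness/blow-up argument is shorter and avoids both the density claim and the Poincar\'e--Sobolev step; the paper's approach, on the other hand, is constructive and in principle yields an explicit constant in terms of $M(z)$, $L$, $n$, $\lambda$, $\Lambda$.

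One small simplification of your own write-up: the detour through ``continuity on compact subintervals'' in the last paragraph is unnecessary and, as written, leaves the right endpoint $r\to\tilde r^-$ unaddressed. In fact your blow-up argument nowhere uses $r_k\to 0$: the rescalings $v_k(y)=u_i(z+r_k y)/r_k$ are uniformly $L$-Lipschitz on $\overline{B_1}$ with $v_k(0)=0$ for \emph{any} sequence $r_k\in(0,\tilde r)$, Arzel\`a--Ascoli applies, and Lemma~\ref{lemma: both bounds} still gives $\sup_{\overline{B_1}} v_k\ge c$. So assuming $\inf_{0<r<\tilde r} J_r(u_i,z)=0$ and extracting a minimizing sequence yields the same contradiction directly.
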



\begin{proof}
Without loss of generality, we prove the lemma for $i=1$. By Lemma \ref{lemma: both bounds} 
 there exists $c=c(z)>0$ such that for any radius $ r < \tilde{r}<d(z,\partial\Om),$
 there exists $y \in  \overline{ B}_\frac{r}{4}(z)$ such that 
 \begin{equation}
\label{for harnack}
u_1 (y)\geq c\frac{r}{4}.
\end{equation}
 Let $x_0 \in \Ga$ be the closest point in the free boundary to $y,$ $h= \abs{y-x_0}$ and consider $B_{h}(y)$. Note that $h \leq \abs{y-z}\leq r/4$, 
 \begin{equation}\label{balltan}B_{h}(y)  \subset  \Om(u_1)\cap B_r(z),\end{equation} and $B_{h}(x_0)  \subset   B_r(z).$  
 Moreover, since $u_1$ is  Lipschitz continuous in $\Om$, 
there exists $L$ such that $|u(x)-u(y)|\leq L|x-y|$ for any $x,y\in B_{r}(z)$. In particular, we have that  
$$c\frac{r}{4}\leq u_1(y)-u_1(x_0)=u_1(y)\leq L|y-x_0|= Lh,$$ which implies
\begin{equation}\label{hgreaterrrho}\ds\frac{h}{r}\geq \frac{c}{4 L}.
\end{equation}
Next, since \eqref{balltan} holds,  Lemma \ref{onesidepointsregular} implies that $x_0$   is also a regular point and  for  any $x\in B_h(y)\setminus B_\frac{h}{2}(y)$,  
 \begin{equation}\label{znondegencond}
 u_1(x) \geq M d(x,\partial B_h(y))=M(h-|x-y|),
 \end{equation}
 where $M=\ds \frac{u_1(y)C \gamma}{h\, (2^\gamma-1)},$ (see proof for Lemma \ref{onesidepointsregular}).
In particular,  
 for any $s<\frac{h}{2}$,
 \begin{equation*}\sup_{B_s(x_0)}u_1\geq Ms.
  \end{equation*} 
We now note that, from \eqref{for harnack} and  the inequality  $h\leq r/4$, 
\begin{equation}\label{tildeM}
M \geq \frac{c \,  \gamma \,C}{2^{\gamma}-1}=:\tilde{M}, 
\end{equation}
where $\tilde{M}$ depends on $z$ but it is independent of $x_0$, $h$ and $r$.
 Since now we have, for any $s<h/2$
$$
\sup_{B_{s}(x_0)} U \geq \tilde{M} \, s,
$$
where $U=\max\{u_1,u_2\}$, by Lemma \ref{lemma: both bounds} there exists $\tilde{c}=\tilde{c}(\tilde{M})$ such  that,  for any $s<h/2$, 
\begin{equation}
\label{u2nodegenelemJ}
\sup_{B_s(x_0)}u_2\geq \tilde{c}s.
\end{equation}  

We are now in conditions to  apply a Poincar\'{e}-Sobolev type inequality  to $u_1$ (see e.g. \cite[Chapter 4, Lemma 2.8]{han_elliptic_2011} and \cite[Theorem 3]{jiang_zero_2004}). Indeed, we claim that the zero set of $u_1$  has positive density.\\
{\em{Claim:}  There exists $\ep>0$ independent of $h$ such that  
\begin{equation}\label{u2nodegenmeasure}|\{u_1=0\}\cap B_{\frac{h}{2}}(x_0)|\geq |\{u_2>0\}\cap B_{\frac{h}{2}}(x_0)|\geq \ep  |B_{\frac{h}{2}}(x_0)|.\end{equation}
    }\\
{\em Proof of the claim:} Suppose by contradiction that for any $\ep>0$ one has $\left|\{u_2>0\}\cap B_{\frac{h}{2}}(x_0)\right|< \ep \left |B_{\frac{h}{2}}(x_0)\right|$.
Since  $u_2$ is Lipschitz continuous  in $\Om$, 
there exists $L>0$ such that for all $x\in B_{\frac{h}{2}}(x_0)$,
\begin{equation}\label{boundfromlip}u_2(x)\leq L|x-x_0|\leq L\ds\frac{h}{2}.\end{equation}

 Since  $u_2$ is subharmonic, see \eqref{dos}, the mean-value Theorem implies that for any $x\in B_{\frac{h}{4}}(x_0)$,
$$u_2(x)\leq \fint_{ B_{\frac{h}{4}}(x)}u_2(t)dt\leq\frac{1}{\left| B_{\frac{h}{4}}(x)\right|}\int_{\{u_2>0\}\cap B_{\frac{h}{2}}(x_0)}u_2(t)dt
\leq  \frac{\ep \left|B_{\frac{h}{2}}(x_0)\right|}{\left|B_{\frac{h}{4}}(x)\right|}L\frac{h}{2}=\ep 2^{n-1}Lh,$$
which is in contradiction with \eqref{u2nodegenelemJ} with $s=\frac{h}{4}$ for $\ep< \frac{\tilde{c}}{L2^{n+1}}$. This proves \eqref{u2nodegenmeasure}.

Next, to conclude the proof of the lemma, since 
\begin{equation}
\label{uno}
 \frac{1}{r^{2}} \int_{B_{r}(z)} \frac{\vert  \nabla u_1(x) \vert^{2}}{\vert x-z \vert^{n-2}} dx \geq
 \frac{1}{r^{n}} \int_{B_{r}(z)} \vert  \nabla u_1(x) \vert^{2} dx
 \geq \frac{1}{r^{n}} \int_{B_{\frac{h}{2}}(x_0)} \vert  \nabla u_1(x)\vert^{2}dx
\end{equation}
we just need to bound from below the last integral. 

%
Since  \eqref{u2nodegenmeasure} holds true, we can apply  the Poincar\'{e}-Sobolev type inequality 
 to obtain
\begin{equation}
\label{poincare}
 \frac{1}{r^{n}} \int_{B_{\frac{h}{2}}(x_0)} \vert  \nabla u_1(x)\vert^{2}dx\geq  \frac{1}{r^{n}}\frac{1}{C(n,\ep)h^2} \int_{B_{\frac{h}{2}}(x_0)}u_1^2(x)dx.\end{equation}
Finally, by using \eqref{znondegencond} and \eqref{tildeM}, we get

\begin{equation}\label{lastestimhJ}\begin{split}\frac{1}{r^{n}}\frac{1}{C(n,\ep)h^2} \int_{B_{\frac{h}{2}}(x_0)}u_1^2(x)dx
&\geq \frac{1}{r^{n}}\frac{1}{C(n,\ep)h^2} \int_{B_{\frac{h}{2}}(x_0)\cap B_h(y)}\tilde{M}^2(h-|x-y|)^2dx\\&
\geq  \frac{1}{r^{n}}\frac{1}{C(n,\ep)h^2} \int_{B_{\frac{h}{2}}(x_0)\cap B_{\frac{7}{8}h}(y)}\tilde{M}^2(h-|x-y|)^2dx\\&
\geq \frac{  \tilde{M}^2 h^2h^n}{r^{n}C(n,\ep)h^2},
\end{split}\end{equation}
where in the last inequality we have used that $ |B_{\frac{h}{2}}(x_0)\cap B_{\frac{7}{8}h}(y)|\geq \bar{c}\,h^n$ and $ \tilde{M}^2(h-|x-y|)^2\geq \frac{\tilde{M}^2}{64} h^2$ for any $x\in B_{\frac{7}{8}h}(y).$

Putting all together, from \eqref{hgreaterrrho}, \eqref{uno}, \eqref{poincare} and \eqref{lastestimhJ}   we infer that there exists $C=C(z)>0$ such that 
 $$\frac{1}{r^{2}} \int_{B_{r}(z)} \frac{\vert  \nabla u_1(x) \vert^{2}}{\vert x-z \vert^{n-2}} dx \geq C,$$ and this concludes the proof of the lemma.
\end{proof}
The following is an immediate corollary of Lemma \ref{lemma: lower bound J}.

\begin{cor}\label{corlemJpos}
Let $u$ be a Lipschitz solution of problem \eqref{limitproblemk=2}  and let $z \in \Ga$ be a regular point.  Then there exists  a  constant $C=C(z)>0$ such that, for any $ 0<r < \tilde{r}$, 
\begin{equation}
J_r(u,z)\geq C,
\end{equation}
where $J_r(u,z)$ is defined by \eqref{J(u)}.
\end{cor}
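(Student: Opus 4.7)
The plan is essentially trivial: this is explicitly flagged as an immediate corollary of Lemma \ref{lemma: lower bound J}, so the proof is just a matter of combining the two cases $i=1$ and $i=2$ of that lemma by multiplication. Specifically, I would proceed as follows.

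First, I would invoke Lemma \ref{lemma: lower bound J} with $i=1$ at the regular point $z \in \Gamma$: this provides a constant $C_1 = C_1(z) > 0$ such that $J_r(u_1, z) \geq C_1$ for every $0 < r < \tilde{r}$. I would then apply the same lemma with $i=2$, giving $C_2 = C_2(z) > 0$ with $J_r(u_2, z) \geq C_2$ for every $0 < r < \tilde{r}$. Both constants are positive and independent of $r$ (depending only on the point $z$ through the regularity parameters $\tilde{r}(z)$ and $M(z)$ of Definition \ref{firstdefinition}, together with the Lipschitz norm of $u$ and universal quantities).

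Second, I would recall the definition \eqref{J(u)}, namely
\begin{equation*}
J_r(u,z) = J_r(u_1, z)\, J_r(u_2, z),
\end{equation*}
and simply multiply the two lower bounds to obtain
\begin{equation*}
J_r(u, z) \geq C_1 C_2 =: C(z) > 0
\end{equation*}
for every $0 < r < \tilde{r}$, which is the desired conclusion.

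There is no genuine obstacle here; all the work has already been done in Lemma \ref{lemma: lower bound J}, where the nontrivial content was showing that the \emph{same} non-degeneracy property holds for both $u_1$ and $u_2$ at a regular point (via Lemma \ref{lemma: both bounds} and the Poincaré–Sobolev argument combined with the density estimate $|\{u_1 = 0\} \cap B_{h/2}(x_0)| \geq \varepsilon |B_{h/2}(x_0)|$). Once both factors in the product $J_r(u_1, z) J_r(u_2, z)$ are uniformly bounded below, the corollary is immediate.
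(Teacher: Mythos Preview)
Your proposal is correct and matches the paper's own treatment exactly: the paper states that the corollary is an immediate consequence of Lemma~\ref{lemma: lower bound J} and gives no further proof. Multiplying the two lower bounds $J_r(u_1,z)\ge C_1$ and $J_r(u_2,z)\ge C_2$ via the definition $J_r(u,z)=J_r(u_1,z)J_r(u_2,z)$ is precisely the intended argument.
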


\section{Proof of  Theorem \ref{limitpbthm}}\label{Freeboundarycondisec} 
We start with the analysis of the blow up of the solution at regular points.  As in Section \ref{regulapointssec},  throughout this section we will use the notation introduced in \eqref{pos neg} for $u$ solution of \eqref{limitproblemk=2}.
\begin{lem}\label{planeregularpointslem} Let $u$ be a Lipschitz solution of problem \eqref{limitproblemk=2}. Let 
 $0\in\Gamma$ be a regular point. 
   Let $u_r$ denote the blow-up sequence
 $$
u_r(x):=\frac{1}{r}u(rx), \quad  x\in B_2(0),
$$
with $r<d(0,\partial\Om)/2$. Then, $u_r$ admits a uniformly converging subsequence in $B_1(0)$ and   for any converging subsequence   $u_{r_j}(x) =u(r_jx)/r_j$, $j\in\N$,  there exist $\alpha,\,\beta>0$ and a unit vector $n$, such that,
 \begin{equation}\label{alphabetaj0}J_0(u)=c_n\alpha^2\beta^2,\end{equation}
 where $J_0(u)$ is defined as in \eqref{zero0}, and 
 as $j\to+\infty$, 
\begin{equation}\label{ubarpalbe}u_{r_j}(x)\to\alpha  <x, n>^+ - \,\beta<x,n>^-,\end{equation}
uniformly  in $B_1(0)$.
\end{lem}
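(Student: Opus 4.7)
The plan is to combine the Lipschitz regularity of $u$ with the monotonicity formula (Theorem \ref{mootonicitythm}) and the rigidity result of Theorem \ref{blowupthm}, using the non-degeneracy estimate at regular points provided by Corollary \ref{corlemJpos}.

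First, since $u$ is Lipschitz in $\overline{\Omega}$ with some constant $L$, each $u_r$ is Lipschitz in $B_2(0)$ with the same constant $L$ and $u_r(0)=0$. Hence $\{u_r\}$ is uniformly bounded and equicontinuous in $B_1(0)$, and Ascoli--Arzel\`{a} yields a subsequence $u_{r_j}$ converging uniformly in $B_1(0)$ to a Lipschitz function $u_0$. Writing $u_{0,1}:=u_0^+$ and $u_{0,2}:=u_0^-$, a direct scaling of the relations $\Delta u_1\ge 0$ and $\Delta u_2\ge 0$ in $\Omega$ shows that the scaled components $u_{r_j,i}$ are non-negative subharmonic functions on $B_2(0)$; passing to the uniform limit (in the distributional sense) we conclude that $u_{0,1}, u_{0,2}$ are non-negative and subharmonic on $B_1(0)$, satisfy $u_{0,1}u_{0,2}\equiv 0$, and $u_{0,1}(0)u_{0,2}(0)=0$. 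Thus the hypotheses of Theorems \ref{mootonicitythm} and \ref{blowupthm} hold for $u_0$.

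Second, I exploit the scaling invariance of $J$. A direct change of variables gives $J_s(u_r,0)=J_{rs}(u,0)$. By the monotonicity formula, $J_\rho(u,0)$ is monotone non-decreasing in $\rho$ with limit $J_0(u)$ as $\rho\to 0^+$, hence for any fixed $s\in(0,1)$,
\begin{equation*}
\lim_{j\to\infty}J_s(u_{r_j},0)=\lim_{j\to\infty}J_{r_js}(u,0)=J_0(u).
\end{equation*}
Using the uniform convergence $u_{r_j,i}\to u_{0,i}$ together with standard energy estimates for non-negative subharmonic functions (Caccioppoli on slightly larger balls gives uniform $W^{1,2}$-bounds and then strong $L^2$-convergence of the gradients on compact subsets), one can pass to the limit in the weighted integrals defining $J_s$ to obtain
\begin{equation*}
J_s(u_0,0)=J_0(u)\qquad\text{for every }s\in(0,1).
\end{equation*}
Thus $s\mapsto J_s(u_0,0)$ is constant on $(0,1)$.

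Finally, since $J_s(u_0,0)$ is constant, Theorem \ref{blowupthm} forces one of two alternatives: either one of $u_{0,1},u_{0,2}$ vanishes identically on some $B_{r_2}(0)$, or there exist $\alpha,\beta>0$ and a unit vector $n$ with $u_0(x)=\alpha\langle x,n\rangle^+-\beta\langle x,n\rangle^-$ and $J_0(u)=c_n\alpha^2\beta^2$. The first alternative must be ruled out, and this is the main obstacle of the argument: here the regularity of $0\in\Gamma$ enters crucially. By Corollary \ref{corlemJpos}, there is $C=C(0)>0$ such that $J_r(u,0)\ge C$ for every $0<r<\tilde r$, and hence $J_0(u)\ge C>0$. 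If one of $u_{0,i}$ vanished on $B_{r_2}(0)$, then $J_{r_2}(u_0,0)=0$, contradicting $J_{r_2}(u_0,0)=J_0(u)>0$. Therefore the second alternative holds, giving the asymptotic profile \eqref{ubarpalbe} and the identity \eqref{alphabetaj0}, and completing the proof.
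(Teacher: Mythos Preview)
Your overall strategy is the same as the paper's: Ascoli--Arzel\`{a} for compactness, the scaling identity $J_s(u_r)=J_{rs}(u)$, the monotonicity formula to see that $J_s(\bar u)$ is constant in $s$, Theorem \ref{blowupthm} for rigidity, and Corollary \ref{corlemJpos} to exclude the degenerate alternative. The structure is correct.

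The gap is in the sentence ``Caccioppoli on slightly larger balls gives uniform $W^{1,2}$-bounds and then strong $L^2$-convergence of the gradients on compact subsets.'' Caccioppoli (for nonnegative subharmonic functions) gives uniform $W^{1,2}$-bounds, hence \emph{weak} $H^1$-convergence and, by lower semicontinuity, $J_s(\bar u)\le \liminf_j J_s(u_{r_j})=J_0(u)$ for each factor. But weak convergence alone does not yield $J_s(\bar u)=J_0(u)$; you need the reverse inequality, i.e.\ essentially strong convergence of gradients in the weighted $L^2$ space, and this does not follow from subharmonicity and uniform convergence by any ``standard'' argument. This is precisely the point on which the paper spends most of its effort.

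The paper's argument for the missing inequality has two ingredients. First, a truncation argument using subharmonicity: testing $\Delta (u_i)_{r_j}\ge 0$ against $\min\{\max\{(u_i)_{r_j},\ep\},h\}$ and letting $\ep\to0$ yields
\[
\int_{\{0\le (u_i)_{r_j}\le h\}\cap B_1}|\nabla (u_i)_{r_j}|^2\,dx\le Ch,
\]
so the energy on a thin strip around the free boundary is uniformly small. Second, away from the free boundary one uses that $u_{r_j}$ solves $\Fn(u_{r_j})=0$ in $\{u_{r_j}>0\}$ and $\Fp(u_{r_j})=0$ in $\{u_{r_j}<0\}$; interior $C^{2,\alpha}$ estimates for these concave/convex operators give uniform convergence of $\nabla u_{r_j}$ to $\nabla \bar u$ on $\{|\bar u|>h\}$. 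Combining the two and letting $h\to0$ yields $\int_{B_1}|\nabla u_{r_j}|^2\to\int_{B_1}|\nabla \bar u|^2$, hence a.e.\ convergence of gradients, and then dominated convergence (using the uniform Lipschitz bound $|\nabla u_{r_j}|\le L$ and $|x|^{-(n-2)}\in L^1(B_1)$) gives $J_s(u_{r_j})\to J_s(\bar u)$. You should either supply an argument of this type or cite it explicitly; as written, the key analytic step is not justified.
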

\begin{proof}
Since $u$ is Lipschitz continuous in $\Om$, the sequence $\{u_r\}$ is uniformly bounded in $C^{0,1}(B_2(0)).$ Therefore, by Ascoli-Arzela,  there exists
a subsequence $\{u_{r_j}\}$  and a Lipschitz function $\bar u$, such that, as $j\to+\infty$,    $u_{r_j}\to \bar u$ uniformly in $B_1(0)$ and weakly in $H^1(B_1(0))$.
In particular, 
\begin{equation}\label{limiggrad}\int_{B_1(0)}|\nabla \bar u|^2\,dx\leq \liminf_{j\to+\infty}\int_{B_1(0)}|\nabla u_{r_j}|^2\,dx.\end{equation}
 We will prove that for any $s\in(0,1)$, 
\begin{equation}\label{monotformulalimse7}J_s(\bar{u})=J_0(u)>0,\end{equation}
where $J_r(u)$ is defined as in  \eqref{J(u)}.
 For that, we truncate $(u_1)_{r_j}=u_1(r_jx)/r_j$ at level $\ep$ and $h$, for  $0<\ep<h$, by considering $w_{\ep,h}:=\min\{\max\{(u_1)_{r_j},\ep\},h \}$. 
Since each $(u_1)_{r_j}$ is subharmonic (i.e., $\Delta (u_1)_{r_j} =\mu_{r_j} \geq 0$, in the sense of distributions, and $\mu_j$ is a Radon measure)  and  Lipschitz continuous, then we have $w_{\ep,h}\nabla (u_1)_{r_j} \in L^{\infty}(B_1(0))$ and the product rule $\div \, (w_{\ep,h}\nabla (u_1)_{r_j})=w_{\ep,h} \mu_{r_j} + \nabla w_{\ep,h} \cdot \nabla (u_1)_{r_j}$ holds in the sense of distributions. Moreover, we can integrate by  parts (see \cite{ctz,cct}) in $B_1(0)$:
\begin{equation}
\label{util}
\int_{B_1(0)} \div (\, w_{\ep,h}\nabla (u_1)_{r_j}) = \int_{\partial B_1(0)} (w_{\ep,h}\nabla (u_1)_{r_j} \cdot \nu)_{tr} d\mathcal{H}^{n-1},
\end{equation}
where $(w_{\ep,h} \nabla (u_1)_{r_j} \cdot \nu)_{tr} \in L^{\infty} (\partial B_1(0))$ is the normal trace of the vector field $w_{\ep,h}\nabla (u_1)_{r_j}$ and which satisfies
\begin{equation} 
 (w_{\ep,h}\nabla (u_1)_{r_j} \cdot \nu)_{tr} \leq \norm{ w_{\ep,h}\nabla (u_1)_{r_j}} {L^{\infty}(B_1(0))} \leq h \norm{\nabla (u_1)_{r_j}}{L^{\infty}(B_1(0))} \leq hL.
\end{equation} 
From \eqref{util}, and since $\nabla w_{\ep,h} =0$ a.e. in  $B_1(0) \cap \{(u_1)_{r_j} \geq h\}$ and in $B_1(0) \cap \{(u_1)_{r_j} \leq \ep \}$, we obtain

\begin{equation}\begin{split}
0&\le \int_{B_1(0)} w_{\ep,h}d\mu_{r_j}=-\int_{\ep \leq (u_1)_{r_j}\leq h\}\cap B_1(0)}|\nabla (u_1)_{r_j}|^2\,dx+\int_{\partial B_1(0)} (w_{\ep,h} \nabla (u_1)_{r_j} \cdot \nu)_{tr}d\mathcal{H}^{n-1}
\\&\le -\int_{\{\ep \leq(u_1)_{r_j}\leq h\}\cap B_1(0)}|\nabla (u_1)_{r_j}|^2\,dx+ Ch,\end{split}\end{equation}
with $C= L\mathcal{H}^{n-1} (\partial B_1(0))$. Using the Lebesgue Dominated Convergence Theorem we let $\ep$ go to 0, obtaining:
\begin{equation}\label{gradientstrip}
\int_{\{0\leq (u_1)_{r_j}\leq h\}\cap B_1(0)}|\nabla (u_1)_{r_j} |^2\,dx\leq Ch.
\end{equation}
Similarly, one gets
\begin{equation}\label{gradientstripbis}
\int_{\{0\leq (u_2)_{r_j}\leq h\}\cap B_1(0)}|\nabla (u_2)_{r_j}|^2\,dx\leq Ch.
\end{equation}
 From \eqref{gradientstrip} and \eqref{gradientstripbis}, we obtain
\begin{equation}
\label{navidad1}
\int_{\{|u_{r_j}|\leq h\}\cap B_1(0)}|\nabla u _{r_j} |^2\,dx\leq Ch.
\end{equation}
Next, for $j$ large enough, the set  $\{|u_{r_j}|>0\}$ contains  $\{|\bar u|>h\}$.  Moreover, since $u_{r_j}$ is a Lipschitz viscosity solution of 
\begin{equation}\label{blowupsolutions}\begin{cases}
\Fn (u_{r_j})= 0 &\text{in }\{u_{r_j}>0\}\cap B_2(0)\\
 \Fp (u_{r_j})= 0 &\text{in }\{u_{r_j}<0\}\cap B_2(0),
\end{cases}
\end{equation}
by the interior $C^{2,\alpha}$ estimates for the operators $\mathcal{F}^\pm$ (see  Remark \ref{FpFnremark}), up to a subsequence, 
$\nabla u_{r_j}\to \nabla \bar u$ uniformly in $\{|\bar u|>h\}\cap B_1(0)$ as $j\to +\infty$, and thus,
\begin{equation}
\label{navidad2}
\lim_{j\to+\infty}\int_{\{|u_{r_j}|>h\}\cap B_1(0)}|\nabla u_{r_j}|^2\,dx=\int_{\{|\bar u|>h\}\cap B_1(0)}|\nabla \bar u|^2\,dx.
\end{equation}
By \eqref{navidad1} and \eqref{navidad2} we infer that, for any $h>0$,
$$\limsup_{j\to+\infty}\int_{B_1(0)}|\nabla u_{r_j}|^2\,dx\le \int_{B_1(0)}|\nabla \bar u|^2\,dx+Ch,$$
which combined with \eqref{limiggrad}  yields, letting $h\to0$,
\begin{equation}\label{finally}
\lim_{j\to+\infty}\int_{B_1(0)}|\nabla u_{r_j}|^2\,dx=\int_{B_1(0)}|\nabla \bar u|^2\,dx.
\end{equation}
By \eqref{finally}, $|\nabla u_{r_j}|^2\to |\nabla \bar u|^2$ a.e. in $B_1(0)$. Since in addition $|\nabla u_{r_j}|^2/|x|^{n-2}\le L^2 /|x|^{n-2}\in L^{1}(B_1(0))$, by the Dominated Convergence Theorem we infer that, for any $s\in(0,1)$,
\begin{equation}
\label{2}
 \lim_{j\to+\infty}J_s(u_{r_j})=J_s(\bar{u}).
\end{equation}
Next, by Corollary \ref{corlemJpos} and  Remark \ref{remark: lower bound}
\begin{equation}
\label{zero}
\lim_{r\rightarrow 0^+} J_r(u)=J_0(u)>0.
\end{equation}
Let $s\in(0,1)$. A change of variables yields:
\begin{equation}
\label{1}
J_s(u_{r_j})=J_{sr_j}(u).
\end{equation}
Therefore, by \eqref{2}-\eqref{1},
 for any $s\in(0,1)$,
\begin{equation*}
\label{zerozero}
J_s(\bar u)=\lim_{j\to+\infty} J_s(u_{r_j})=\lim_{j\to+\infty} J_{sr_j}(u)=J_0(u)>0,
\end{equation*}
 which gives \eqref{monotformulalimse7}.
The conclusion of the lemma follows from Theorem \ref{blowupthm}.
\end{proof}

\begin{cor}\label{flatnessprop}
Under the assumptions of Lemma \ref{planeregularpointslem}, 
 for any $\ep>0$ there exists $J\in\N$ such that for any $j\ge J$,   all the level sets of $u_{r_j}$ in $B_1(0)$ are $\ep$-flat, in the sense that
for any $\lambda\in\R$, we have 
\begin{equation}\label{flatnesspropeq}\{u_{r_j}=\lambda \}\cap B_1(0)\subset\{x\in\R^n\,|\,d(x,\Pi_\lambda)<c\ep \},\end{equation}
for some $c>0$ independent of $ \lambda,\,r_{j}$ and $\ep$, 
where 
$$
\Pi_\lambda=
\begin{cases}
\{ <x, n>=\lambda /\alpha\}&\quad\text{if }\lambda >\ep,\\
\{ <x, n>=0\}&\quad\text{if }\lambda \in[-\ep,\ep], \\
\{ <x, n>=\lambda /\beta\}&\quad\text{if }\lambda <-\ep.
\end{cases}
$$
and $n, \alpha \: \mbox{and} \:\beta$ are as in \eqref{ubarpalbe}.
\end{cor}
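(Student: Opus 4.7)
The plan is to deduce flatness of the level sets of $u_{r_j}$ directly from the uniform convergence $u_{r_j}\to \bar u$ on $B_1(0)$ established in Lemma \ref{planeregularpointslem}, where $\bar u(x)=\alpha\langle x,n\rangle^+-\beta\langle x,n\rangle^-$ is the two-plane limiting profile. Since the level sets of $\bar u$ are precisely the hyperplanes $\Pi_\lambda$ appearing in the statement, the idea is to transfer the $L^\infty$-closeness of $u_{r_j}$ to $\bar u$ into geometric closeness of the level sets, invoking the explicit form of $\bar u$ to read off the distance to $\Pi_\lambda$ directly.

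Given $\ep>0$, I would choose $J=J(\ep)$ so that $\|u_{r_j}-\bar u\|_{L^\infty(B_1(0))}<\ep$ for every $j\ge J$. Then for any $x\in B_1(0)$ with $u_{r_j}(x)=\lambda$ one has $|\bar u(x)-\lambda|<\ep$, and three cases arise. If $\lambda>\ep$, this inequality forces $\bar u(x)>0$, hence $\bar u(x)=\alpha\langle x,n\rangle$ and
\begin{equation*}
\left|\langle x,n\rangle-\frac{\lambda}{\alpha}\right|=\frac{1}{\alpha}|\bar u(x)-\lambda|<\frac{\ep}{\alpha},
\end{equation*}
which means $d(x,\Pi_\lambda)<\ep/\alpha$. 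The case $\lambda<-\ep$ is symmetric and yields $d(x,\Pi_\lambda)<\ep/\beta$. For $\lambda\in[-\ep,\ep]$ one only has $|\bar u(x)|\le|\lambda|+\ep\le 2\ep$, but combined with the pointwise lower bound $|\bar u(x)|=\alpha\langle x,n\rangle^++\beta\langle x,n\rangle^-\ge \min(\alpha,\beta)|\langle x,n\rangle|$ this forces $|\langle x,n\rangle|\le 2\ep/\min(\alpha,\beta)$, which is exactly $d(x,\Pi_\lambda)\le 2\ep/\min(\alpha,\beta)$.

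Taking $c:=2/\min(\alpha,\beta)$, the three cases together yield the claimed inclusion, and $c$ depends only on the blow-up constants $\alpha,\beta$ associated with the fixed subsequence, independently of $\lambda$, $r_j$, and $\ep$. The only mildly delicate point is the transitional regime $|\lambda|\le\ep$, where $\Pi_\lambda$ is pinned to $\{\langle x,n\rangle=0\}$ rather than shifted proportionally to $\lambda$; here one must exploit the positivity of both $\alpha$ and $\beta$ simultaneously, which is why the constant $c$ picks up $\min(\alpha,\beta)^{-1}$ instead of $\alpha^{-1}$ or $\beta^{-1}$ individually. Outside of this case, the argument is a one-line pointwise inversion of the formula for $\bar u$, so no substantive obstacle is anticipated.
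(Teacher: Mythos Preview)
Your proof is correct and follows essentially the same approach as the paper's own proof: both use the uniform closeness of $u_{r_j}$ to the two-plane function $\bar u$ to trap each level set in a strip of width $O(\ep)$ around the corresponding hyperplane, treating the three ranges of $\lambda$ separately and arriving at the same constant $c=2\max\{\alpha^{-1},\beta^{-1}\}$. The only cosmetic difference is that the paper argues via the contrapositive (showing $u_{r_j}>\lambda$ or $u_{r_j}<\lambda$ outside the strip), whereas you argue directly from a point on the level set; your handling of the transitional case via the lower bound $|\bar u(x)|\ge\min(\alpha,\beta)|\langle x,n\rangle|$ is in fact slightly cleaner than the paper's.
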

\begin{proof}
By Lemma \ref{planeregularpointslem},
for any $\ep>0$, there exists $J\in\N$ such that for any $j\ge J$, 
\begin{equation}\label{uclosetoplanepropflat}\left|u_{r_j}(x)- \alpha  <x, n>^+ + \,\beta<x,n>^-\right|<\frac{\ep}{2},\end{equation} for all $x\in B_1(0)$.
Let  $\lambda>\ep $, then by \eqref{uclosetoplanepropflat}, $\text{if }<x, n>\ge\frac{\lambda+\ep }{\alpha}$ then
$$u_{r_j}(x)\geq \lambda +\ep -\frac{\ep}{2}= \lambda +\frac{\ep}{2}>\lambda,$$
and  $\text{if }<x, n>\le\frac{\lambda-\ep}{\alpha}$, then
$$u_{r_j}(x)\leq \lambda -\frac{\ep}{2}<\lambda.$$
We infer that 
\begin{equation}\label{flatnesspropeq1}\{u_{r_j}=\lambda \}\cap B_1(0)\subset \left\{x\in\R^n\,|\, \frac{-\ep }{\alpha}\le\: <x, n>-\frac{\lambda }{\alpha}\leq \frac{\ep }{\alpha}\right\}.\end{equation}
Similarly, if $\lambda \in[-\ep ,\ep ]$, 
\begin{equation}\label{flatnesspropeq2}\{u_{r_j}=\lambda \}\cap B_1(0)\subset \left\{x\in\R^n\,|\, \frac{-2\ep }{\beta}\le \: <x, n>\: \leq \frac{2\ep }{\alpha}\right\},\end{equation}
 and if $\lambda<-\ep$, 
\begin{equation}\label{flatnesspropeq3}\{u_{r_j}=\lambda \}\cap B_1(0)\subset \left\{x\in\R^n\,|\, \frac{-\ep }{\beta}\le \:<x, n>-\frac{\lambda }{\beta}\leq \frac{\ep }{\beta}\right\}.\end{equation}
Inclusions \eqref{flatnesspropeq1},  \eqref{flatnesspropeq2} and  \eqref{flatnesspropeq3} give 
\eqref{flatnesspropeq} with 
$$c=2\max\{\alpha^{-1}, \beta^{-1}\}.$$
\end{proof}
By Lemma \ref{planeregularpointslem}  we know that if $0\in\Gamma$ is a regular point, then the blow up sequence $u(rx)/r$ admits a subsequence converging to a two-plane solution of the form \eqref{ubarpalbe}.
We next show that if there is a tangent ball from one side to $\Gamma$ at 0, then the full sequence $u(rx)/r$ converges to a two-plane solution and therefore $u$ has an asymptotic linear behavior  at 0. 

\begin{lem}\label{planeballconcor} Let $u$ be a Lipschitz solution of problem \eqref{limitproblemk=2}.
Let $0\in\Gamma$. Assume that there exists a tangent ball $B$ from one side to $\Gamma$ at 0. Then, 
 there exist   $\alpha,\,\beta>0$ such that 
\begin{equation}\label{planeballconcorformula}u(x)=\alpha  <x, {\bf \nu}>^+ - \,\beta<x,{\bf \nu}>^-+o(|x|),\end{equation}
where $\nu$ is the normal vector of $B$ at 0 pointing inward $ \{u>0\}$.
\end{lem}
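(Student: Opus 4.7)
The plan is to show that the full blow-up sequence $u_r(x):=u(rx)/r$ converges uniformly on $B_1(0)$ to a fixed two-plane $\alpha\langle x,\nu\rangle^+-\beta\langle x,\nu\rangle^-$, which is equivalent to the asymptotic expansion \eqref{planeballconcorformula}. By Lemma \ref{onesidepointsregular}, the presence of a tangent ball makes $0$ a regular point, so Lemma \ref{planeregularpointslem} applies: every sequence $r_j\to 0^+$ admits a subsequence along which $u_{r_j}$ converges uniformly on $B_1$ to a two-plane $v(x)=\alpha\langle x,n\rangle^+-\beta\langle x,n\rangle^-$ with $\alpha,\beta>0$ and $c_n\alpha^2\beta^2=J_0(u)$. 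It remains to verify that the triple $(n,\alpha,\beta)$ is intrinsic to $u$ at $0$ and does not depend on the chosen subsequence.

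First I would fix the normal direction. Assume without loss of generality that $B=B_\rho(\rho\nu)\subset\{u>0\}$. Then the rescaled ball $B/r=B_{\rho/r}(\rho\nu/r)$ is contained in $\{u_r>0\}$, and as $r\to 0^+$ this family exhausts the open half-space $H:=\{x:\langle x,\nu\rangle>0\}$: for any $x\in H$, the inclusion $x\in B/r$ reduces to $|x|^2<2\langle x,\nu\rangle\rho/r$, which holds for $r$ sufficiently small. Passing to the limit in a converging subsequence $u_{r_j}\to v$ gives $v\geq 0$ on $H$ with $\puccin(v)=0$ there; since $v$ has the form $\alpha\langle x,n\rangle^+-\beta\langle x,n\rangle^-$, its positivity set is the half-space $\{\langle x,n\rangle>0\}$, which must contain $H$. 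This forces $n=\nu$.

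Next I would pin down the coefficient $\alpha$ intrinsically as the one-sided limit $\alpha=\lim_{t\to 0^+}u(t\nu)/t$. Inside the smooth tangent ball $B$, the function $u=u_1$ is a positive viscosity solution of the concave uniformly elliptic equation $\puccin(u)=0$ with $u(0)=0$. The barrier comparison already carried out in the proof of Lemma \ref{onesidepointsregular} supplies a linear lower bound $u(x)\geq c\,d(x,\partial B)$ for $x\in B$ near $0$, while the Lipschitz continuity of $u$ supplies the matching upper bound. A boundary $C^{1,\alpha}$ estimate at the single smooth boundary point $0$ for concave uniformly elliptic equations (applied in $B$) then upgrades these two-sided bounds to the existence of the limit $\alpha>0$. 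Because $\alpha$ is defined directly in terms of $u$, every subsequential blow-up must produce the same value. The monotonicity identity $c_n\alpha^2\beta^2=J_0(u)$, combined with $J_0(u)>0$ from Corollary \ref{corlemJpos}, then determines $\beta=\sqrt{J_0(u)/c_n}/\alpha$, so $\beta$ is likewise independent of the subsequence. Hence all subsequential blow-ups coincide, the full sequence $u_r$ converges to $\alpha\langle x,\nu\rangle^+-\beta\langle x,\nu\rangle^-$, and the desired expansion \eqref{planeballconcorformula} follows.

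The main obstacle is the boundary regularity step: the free boundary $\partial\{u>0\}$ is not a priori smooth at $0$, so the boundary estimate has to be applied inside the tangent ball $B$ to $u|_B$, whose trace on $\partial B$ is only Lipschitz. Extracting the genuine one-sided derivative $\lim_{t\to 0^+}u(t\nu)/t$ from merely Lipschitz Dirichlet data on a smooth domain, rather than only matching $\liminf$/$\limsup$ bounds coming from Hopf and Lipschitz estimates, is the delicate point that must be addressed to close the argument.
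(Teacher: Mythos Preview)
Your overall architecture matches the paper's: the tangent ball makes $0$ regular, Lemma~\ref{planeregularpointslem} produces two-plane subsequential blow-ups, you identify the normal with $\nu$ (this is the paper's Claim~1, argued there by a one-line contradiction), and then you try to show the slopes are intrinsic so the full sequence converges. Your observation that $\beta=\sqrt{J_0(u)/c_n}/\alpha$ once $\alpha$ is fixed is a clean shortcut the paper does not make explicit.

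The gap you flag is genuine, and it is exactly where the paper's argument differs from your sketch. A boundary $C^{1,\alpha}$ estimate in $B$ does not apply: with only Lipschitz Dirichlet data on $\partial B$, such estimates yield Lipschitz regularity up to the boundary but not a pointwise normal derivative, so the limit $\lim_{t\to 0^+}u(t\nu)/t$ cannot be extracted this way. The paper bypasses boundary regularity entirely. It fixes a barrier $\phi$ solving $\Fn(\phi)=0$ in the annulus $B_{r_0}(y)\setminus B_{r_0/2}(y)$, which (by Lemma~\ref{fondamentalsollemF}) has the clean expansion $\phi(x)=(\sigma/r_0)\langle x,\nu\rangle+o(|x|)$, and defines
\[
\tilde m_k:=\sup\bigl\{m:\ u_1\ge m\phi\text{ on }B_{2^{-k}}(0)\cap B_{r_0}(y)\bigr\}.
\]
This is nondecreasing and bounded (comparison from below, Lipschitz bound from above), so $m_\infty:=(\sigma/r_0)\lim\tilde m_k$ exists, and trivially $\alpha\ge m_\infty$ for any subsequential slope. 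The real work is $\alpha\le m_\infty$: assuming $\alpha>m_\infty$ along some $r_j$, one gets points $y_j=r_j\nu$ with $u_1(y_j)-\tilde m_{k_j}\phi(y_j)\ge\delta_0 r_j$; since $u_1-\tilde m_{k_j}\phi\ge 0$ satisfies $\ppuccin\le 0\le\ppuccip$ in the tangent ball, Harnack spreads this to $B_{r_j/2}(y_j)$, and a second radial barrier then gives $u_1-\tilde m_{k_j}\phi\ge \tilde c\,d(x,\partial B_{r_0}(y))\ge 2\ep\,\phi$ near $0$, contradicting the maximality of $\tilde m_{k_j}$. The point is that the barrier $\phi$, not $u$, carries the normal derivative, and Harnack transfers the gain back to $u_1$ without any boundary differentiability of $u$. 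Once $\alpha=m_\infty$ is intrinsic, your $J_0$ identity finishes $\beta$.
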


\begin{proof}
By Lemma \ref{onesidepointsregular}, 0 is a regular point.
Consider the sequence  $u_r(x)=\frac{1}{r} u(rx)$, for $r$ small enough.  By  Lemma \ref{planeregularpointslem}, there exist  a subsequence $\{u_{r_j}\}$, a unit vector $n$ and $\alpha, \beta$ positive constants  satisfying \eqref{alphabetaj0}, such that as $j\to+\infty$,
$$
u_{r_j}(x)\to \overline u(x):= \alpha <x, n>^+ - \,\beta<x,n>^-, 
$$
uniformly in $ B_1(0)$. 

Assume, without loss of generality, that there exists a ball $B_{r_0}(y) \subset \Om(u_1)$ such that $0\in\partial B_{r_0}(y)$. Let $\nu$ be the normal vector of $B_{r_0}(y)$ at 0 pointing inward $  \Om(u_1)$. \\
\noindent{\em Claim 1 :}
We claim that $\nu=n$.  \\
\noindent{\em Proof of Claim 1 :}
Indeed, suppose by contradiction that $\nu\neq n$. Then, there exists $x_0\in B_{1}(0)$ such that   for any $j,$ $<r_j\,x_0, \nu>\: >0$ and $<r_j\,x_0, n>\: <0$. 
Fix $J\in\N$ such that the sequence of points $\{r_jx_0\}$ satisfies $r_jx_0\in B_{r_0}(y)\subset \Om(u_1)$ for all $j\ge J$, then
$$\frac{u(r_jx_0)}{r_j}>0\quad\text{for all }j\ge J.$$ 
Passing to the limit as $j\to+\infty$, we get $$\overline u(x_0)\geq 0.$$
On the other hand, 
$$\overline u(x_0)=- \,\beta<x_0,n>^-<0.$$ This is a contradiction. Therefore we must have $\nu=n$.

 We now proceed to show that the full sequence $u_r$ converges to $\overline u$. Let  $\overline u$ and  $\overline v$ be the limits of two converging subsequences of the sequence $\{u_r\}$, then we must have 
$$\overline u= \alpha_1 <x, \nu>^+ - \,\beta_1<x,\nu>^-$$ and 
$$\overline v= \alpha_2<x, \nu>^+ - \,\beta_2<x,\nu>^-.$$

\noindent{\em Claim 2 :}
We claim that in addition that
\begin{equation}\label{alpha1=aplha2claim}\alpha_1=\alpha_2\quad\text{and}\quad\beta_1=\beta_2.\end{equation}
\noindent{\em Proof of Claim 2 :}  To prove this claim,  we will construct a barrier to bound $u_1$ from below. 
Let $\phi$ be the solution of 
\begin{equation*}
\begin{cases}
\Fn(\phi)=0&\text{in }B_{r_0}(y)\setminus B_\frac{r_0}{2}(y)\\
\phi=1&\text{on }\partial B_\frac{r_0}{2}(y)\\
\phi=0&\text{on }\partial B_{r_0}(y).
\end{cases}
\end{equation*}
By the comparison principle, for any $x\in  B_{r_0}(y)\setminus B_\frac{r_0}{2}(y)$
\begin{equation}\label{uleqphi1lem73}u_1(x)\geq c_0\phi(x),\quad\text{with }c_0=\min_{\partial B_\frac{r_0}{2}(y)}u_1.
\end{equation}
For $k\ge 0$ such that $2^{-k}<r_0/2$, let
$$\tilde{m}_k:=\sup\{m\,|\,u_1(x)\ge m \phi(x)\text{ in }B_{2^{-k}}(0)\cap B_{r_0}(y)\}.$$
Notice that the sequence $\tilde{m}_k$ is increasing. Moreover, by \eqref{uleqphi1lem73}, $\tilde m_k\ge c_0$ for any $k$.  Let 
$$\tilde{m}_\infty:=\sup_k \tilde{m}_k=\lim_{k\to+\infty}\tilde{m}_k.$$ 
Since $u_1$ is Lipschitz continuous, $\tilde{m}_\infty<+\infty$.

By Lemma \ref{fondamentalsollemF}, there exists $\sigma>0$ independent of $r_0$ such that, for any $x\in  B_{r_0}(y)\setminus B_\frac{r_0}{2}(y)$,
\begin{equation}\label{phibehavlemsamangle}\phi(x)=\frac{\sigma}{r_0}<x,\nu>+o(|x|).\end{equation}
Set 
$$m_k:=\frac{\sigma}{r_0}\tilde{m}_k\quad\text{and}\quad m_\infty:=\frac{\sigma}{r_0}\tilde{m}_\infty,$$ 
By the definition of $m_k$ and \eqref{phibehavlemsamangle},  
for $x\in B_{2^{-k}}(0)\cap B_{r_0}(y)$ we have
$$u_1(x)\geq \tilde m_k \phi(x) = m_k  <x, \nu>+o(|x|).$$ 
This implies that $\alpha_1\geq m_\infty$. 
Assume by contradiction that $\alpha_1> m_\infty$. 
 We will show that in this case, there exists $\ep > 0$ and a  sequence  $k_j$ such that, for $j$ large enough,
\begin{equation}
\label{contra}
u_1 - \left (\tilde{m}_{k_j}+ \ep \right) \phi(x) \geq 0, \:\text{for all } x \in B_{2^{-k_j}}(0) 
\cap B_{r_0}(y),
\end{equation}
which is in contradiction with the definition of $\tilde{m}_{k_j}$.
For that, if $\alpha_1> m_\infty$,   there exists a sequence $r_j\to 0$ as $j\to+\infty$ such that if $y_j=r_j\nu$, then, for some $\delta_0>0$,
$$u_1(y_j)- \tilde{m}_\infty  \phi(y_j)\geq \delta_0 r_j.$$
Let $\tilde m_{k_j}$ be a subsequence converging to $\tilde m_\infty$ as $j\to+\infty$ such that,  up to eventually consider a subsequence of $\{r_j\}$, one has that $2r_j\leq 2^{-k_j}$.  Then $B_{r_j}(y_j)\subset B_{2^{-k_j}}(0)\cap B_{r_0}(y)$ and
since $\tilde m_{k_j}\le  \tilde{m}_\infty$, 
$$u_1(y_j)- \tilde{m}_{k_j} \phi(y_j)\geq \delta_0 r_j.$$
 By the definition of $\tilde{m}_{k_j} $  the function $u_1-\tilde{m}_{k_j} \phi$ is positive in $B_{2^{-k_j}}(0)\cap B_{r_0}(y)$ and  by Proposition \ref{Fnproperties} it  satisfies 
 
$$\ppuccin(u_1-\tilde{m}_{k_j}\phi)\le  \Fn(u_1-\tilde{m}_{k_j}\phi)\le\Fn(u_1)-\Fn(\tilde{m}_{k_j}\phi)=0,$$
and 
$$\ppuccip(u_1-\tilde{m}_{k_j}\phi)\ge \Fn(u_1)-\Fn(\tilde{m}_{k_j}\phi)=0.$$
Therefore, since $B_{r_j}(y_j)\subset B_{2^{-k_j}}(0)\cap B_{r_0}(y)$, by the Harnack inequality,
\begin{equation}\label{harnacklem73}u_1(x)-\tilde{m}_{k_j} \phi(x)\geq c\delta_0 r_j\quad\text{for  }x\in B_\frac{r_j}{2}(y_j),\end{equation} where $c>0$ is a universal constant. 
By a barrier argument we see that there exist $\delta_j$ and $\tilde{c}>0$ (independent of $j$) such that: 
\begin{equation}\label{deltaeqlem73} u_1(x)-\tilde{m}_{k_j} \phi(x)\geq \tilde{c}d(x,\partial B_{r_0}(y))\quad \text{for }x\in B_{\delta_j}(0)\cap B_{r_0}(y).
\end{equation}
 Indeed, let $z\in B_\frac{r_j}{4}(y_j)\cap \partial B_{r_0-r_j}(y)$ and let $w$ be the closest point to $z$ in  $\partial B_{r_0}(y)$, that is $w\in \partial B_{r_0}(y)\cap \partial  B_{r_j}(z)$.
By \eqref{harnacklem73}, $u_1(x)-\tilde{m}_{k_j} \phi(x)\geq c\delta_0 r_j$ for $x\in B_\frac{r_j}{4}(z)$.
Let $\psi(x)$ be the solution to 
\begin{equation*}
\begin{cases}
\ppuccin(\psi)=0&\text{in }B_{r_j}(z)\setminus B_\frac{r_j}{4}(z)\\
\psi=c\delta_0r_j &\text{on }\partial B_\frac{r_j}{4}(z)\\
\psi=0&\text{on }\partial B_{r_j}(z).
\end{cases}
\end{equation*}
By Lemma \ref{fondamentalsollem}, $\psi(x)=\frac{c\delta_0r_j}{4^\gamma-1}\left(\frac{r_j^\gamma}{|x-z|^\gamma}-1\right)$, $\gamma=\Lambda(n-1)/\lambda-1$  and 
$\psi(x)\ge \frac{c\delta_0\gamma}{4^\gamma-1}(r_j-|x-z|)$. In particular, for all points $x$ in $B_{r_j}(z)\setminus B_\frac{r_j}{4}(z)$ belonging to the segment
from $z$ to $w$ we have that $\psi(x)\ge \tilde{c}(r_j-|x-z|)=\tilde{c} d(x,\partial B_{r_0}(y))$, with $\tilde{c}:=\frac{c\delta_0\gamma}{4^\gamma-1}$. Letting $z$ vary in $B_\frac{r_j}{4}(y_j)\cap \partial B_{r_0-r_j}(y)$, we get \eqref{deltaeqlem73}.

For every $x \in B_{\delta_{j}}(0) \cap B_{r_0}(y)$, we have
\begin{eqnarray*}
u_1 - \tilde{m}_{k_j} \phi(x) &\geq& \tilde{c} d (x, \partial B_{r_0}(y))\\
  &\geq& 2\ep \phi(x)
\end{eqnarray*}
 for some $\ep>0$,
 hence:
\begin{equation}
\label{yaporfavor}
u_1 - (\tilde{m}_{k_j}+ 2\ep) \phi(x) \geq 0, \text{for all } x \in B_{\delta_{j}}(0) \
\cap B_{r_0}(y), \text{ for every } j
\end{equation}
Let $j_0$ be such that $0<\tilde{m}_{\infty} - \tilde{m}_{k_j} < \ep$ for all 
$j \geq j_0$. Given $j_0$, there exists an integer $j_1 \geq j_0$ such that if $j \geq j_1$ then $B_{2^{-k_j}}(0) \cap B_{r_0} (y) \subset B_{\delta_{j_0}} \cap B_{r_0} (y)$. From \eqref{yaporfavor},
\begin{equation*}
u_1 - (\tilde{m}_{k_{j_0}}+ 2\ep) \phi(x) \geq 0, \text{ for all } x \in B_{2^{-k_j}}(0) \cap B_{r_0}(y), \quad j \geq j_1.
\end{equation*}
Thus, for  $j \geq j_1$ we have
\begin{eqnarray*}
u_1 - \left(\tilde{m}_{k_j}+ \ep\right) \phi(x) 
 &=&  u_1 - \left(\tilde{m}_{k_{j_0}} + \ep\right)\phi(x) - (\tilde{m}_{k_j}- \tilde{m}_{k_{j_0}})\phi(x)\\
 &\geq & u_1 -\left (\tilde{m}_{k_{j_0}} +  \ep\right)\phi(x) - \ep\phi(x)\\
 &\geq &  u_1 - (\tilde{m}_{k_{j_0}} + 2\ep )\phi(x) \\&\geq& 0, \text{ for all } x \in B_{2^{-k_j}}(0)\cap B_{r_0}(y),
\end{eqnarray*} 
which contradicts the definition of $\tilde{m}_{k_j}$.

By  \eqref{alpha1=aplha2claim}  we infer that  $\overline u=\overline v$. Since any  subsequence of  $\{u_r\}$ converges to the same function, we deduce that the whole sequence  $\{u_r\}$ converge, 
 as $r\to 0$, uniformly in $B_1(0)$
to the  limit function
$$\overline u= \alpha <x, \nu>^+ - \,\beta<x,\nu>^-$$for some $\alpha,\beta>0$.
This means that for any $\ep>0$, there exists $\rho>0$ such that for any $0<r<\rho$ and any $x\in \overline{B_1(0)}$, then
\begin{equation*}
\label{garganta}
\left| u_r(x)- \alpha <x, \nu>^+ + \,\beta<x,\nu>^-\right|<\ep.
\end{equation*}
Now,  fix $\ep>0$ and let $\rho>0$ be defined as above. Fix $z \in B_{\rho}(0)$ and let $r = |z|.$ Since 
$\frac{z}{r} \in \overline{B_1(0)}$  we have
$$\left| u_r\left(\ds\frac{z}{r}\right)- \alpha <\ds\frac{ z}{r}, \nu>^+ + \,\beta<\frac{ z}{r},\nu>^-\right|<\ep,$$
that is,
\begin{equation}
\label{ya}
\left|u(z)- \alpha <z, \nu>^+ + \,\beta<z,\nu>^-\right|<\ep r=\ep|z|,
\end{equation}
which proves \eqref{planeballconcorformula}.
\end{proof}
Notice that in Lemma \ref{planeballconcor} we did not use that $u$ is a viscosity solution of $\Fn(u)\le 0\le \Fp(u)$ in  $\Om$. In the next theorem we show that if $u$ satisfies in the viscosity sense these two differential inequalities and has the asymptotic linear behavior   \eqref{planeballconcorformula}, then we must have $\alpha=\beta$. 
\begin{thm}\label{alpha=betathm}
Let $u$ be a Lipschitz solution of problem \eqref{limitproblemk=2}. Let $0\in\Gamma$. 
 Assume that  exist
 $\alpha,\,\beta>0$ and a unit vector ${\bf \nu}$ such that 
\begin{equation}\label{ulikepalnethmsameang}u(x)=\alpha  <x, {\bf \nu}>^+-\beta <x, {\bf \nu}>^-+o(|x|).\end{equation}
Then $\alpha=\beta.$
\end{thm}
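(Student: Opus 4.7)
The plan is to reduce the statement to the corresponding statement for the pure two-plane function $v(x):=\alpha\langle x,\nu\rangle^{+}-\beta\langle x,\nu\rangle^{-}$ by a blow-up, and then to contradict one of the two viscosity inequalities via an explicit quadratic test function. By Proposition~\ref{orthogonalF} we may assume $\nu=e_{n}$, so that $v(x)=\alpha x_{n}^{+}-\beta x_{n}^{-}$. Consider the rescalings $u_{r}(x):=u(rx)/r$ for $r\in(0,r_{0})$. The asymptotic development \eqref{ulikepalnethmsameang} together with the $1$-homogeneity of $v$ gives $u_{r}(x)=v(x)+o(r|x|)/r$, which tends to $0$ uniformly for $x$ in any compact set; hence $u_{r}\to v$ uniformly on $B_{R}$ for every fixed $R$. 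Moreover, using the $1$-homogeneity of $\mathcal{F}^{\pm}$ (Proposition~\ref{Fnproperties}(a)) and $D^{2}u_{r}(x)=rD^{2}u(rx)$, the viscosity inequalities $\mathcal{F}^{-}(u)\leq 0\leq \mathcal{F}^{+}(u)$ in $\Omega$ transfer to $\mathcal{F}^{-}(u_{r})\leq 0\leq \mathcal{F}^{+}(u_{r})$ on $B_{R}$ for all $r$ with $rR<d(0,\partial\Omega)$. Since the operators $\mathcal{F}^{\pm}$ are continuous, the standard stability of viscosity solutions under uniform convergence (see \cite{crandall_user_1992}) shows that $v$ itself is a viscosity supersolution of $\mathcal{F}^{-}(v)=0$ and a viscosity subsolution of $\mathcal{F}^{+}(v)=0$ in the whole $\mathbb{R}^{n}$.

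Assume by contradiction that $\alpha>\beta$ and choose $\gamma:=(\alpha+\beta)/2\in(\beta,\alpha)$. Fix any $K>0$ and take $\mu>\Lambda(n-1)K/\lambda$, and consider the quadratic test function
\begin{equation*}
\phi(x):=\gamma x_{n}+\tfrac{1}{2}\langle Mx,x\rangle,\qquad M:=\mathrm{diag}(-K,\dots,-K,\mu).
\end{equation*}
I claim $\phi$ touches $v$ from below at $0$ on a small ball. On $\{x_{n}=0\}$, $\phi(x',0)=-\tfrac{K}{2}|x'|^{2}\leq 0=v(x',0)$. For $x_{n}>0$, $\phi(x)-v(x)=(\gamma-\alpha)x_{n}-\tfrac{K}{2}|x'|^{2}+\tfrac{\mu}{2}x_{n}^{2}$, whose first and second summands are non-positive and whose third is dominated by $|\gamma-\alpha|x_{n}$ as soon as $x_{n}<2(\alpha-\gamma)/\mu$; similarly for $x_{n}<0$, using $(\gamma-\beta)x_{n}\leq 0$. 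Hence $\phi\leq v$ in $B_{\rho}(0)$ for $\rho:=(\alpha-\beta)/\mu$, with equality at $0$. The viscosity supersolution property of $v$ therefore yields $\mathcal{F}^{-}(D^{2}\phi(0))=\mathcal{F}^{-}(M)\leq 0$. On the other hand, by Proposition~\ref{Fnproperties}(d), $\mathcal{F}^{-}(M)\geq \mathcal{M}^{-}(M)=\lambda\mu-\Lambda(n-1)K>0$ by the choice of $\mu$, a contradiction.

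The case $\alpha<\beta$ is symmetric: pick $\gamma=(\alpha+\beta)/2\in(\alpha,\beta)$ and $M:=\mathrm{diag}(K,\dots,K,-\mu)$ with $\mu>\Lambda(n-1)K/\lambda$, and show by the analogous computation that the test function $\phi(x)=\gamma x_{n}+\tfrac{1}{2}\langle Mx,x\rangle$ touches $v$ from above at $0$ on a small ball. The viscosity subsolution property of $v$ gives $\mathcal{F}^{+}(M)\geq 0$, while Proposition~\ref{Fnproperties}(d) yields $\mathcal{F}^{+}(M)\leq \mathcal{M}^{+}(M)=\Lambda(n-1)K-\lambda\mu<0$, again a contradiction. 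Therefore $\alpha=\beta$.

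The main delicate point in carrying this out is that one cannot use the test function $\phi$ directly against $u$, because the difference $v-\phi$ is only quadratic along the hyperplane $\{x_{n}=0\}$ while the asymptotic error $o(|x|)$ is merely sublinear and can overwhelm it; this is exactly what forces the preliminary blow-up step, where the error disappears and $\phi$ becomes a genuine one-sided touching function for the limit $v$. Once this reduction is made, the remaining work is the elementary verification that the specific quadratic perturbation $M$ with one large eigenvalue transverse to the free-boundary hyperplane, and small negative eigenvalues tangentially, simultaneously fits under (resp.\ over) the kinked graph of $v$ and violates the corresponding Pucci bound.
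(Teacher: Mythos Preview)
Your proof is correct and takes a genuinely different route from the paper's. The paper works directly with $u$ (no blow-up) and builds a radial barrier: assuming $\beta<\alpha$, it places a small ball $B_h(y)$ with $y=h\nu$ tangent to the free hyperplane at $0$ and takes the Pucci fundamental solution $\phi(x)=c\bigl(h^{\gamma}/|x-y|^{\gamma}-1\bigr)$ with slope $c\gamma/h\in(\beta+\varepsilon,\alpha-\varepsilon)$ at the touching point. Using the asymptotic development of $u$, it checks that $\phi<u$ on the inner and outer spheres of a thin annulus about $\partial B_h(y)$, and then the comparison principle for $\mathcal{M}^{-}$ together with the strong maximum principle forces $\phi<u$ throughout the annulus, contradicting $\phi(0)=u(0)=0$.

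The difference in strategy is exactly the one you flag in your last paragraph. The paper absorbs the $o(|x|)$ error by comparing on an annulus whose boundary is at a fixed positive distance from $0$ (scaled by $h$), so the linear gap $(\alpha-\varepsilon)-(\beta+\varepsilon)$ beats the sublinear remainder there; no blow-up is needed but the computation is longer. Your approach trades that barrier construction for the stability of viscosity inequalities under the blow-up $u_r\to v$, after which a single quadratic test function with one large eigenvalue in the $\nu$-direction already violates $\mathcal{F}^{-}(M)\leq 0$ via the lower bound $\mathcal{F}^{-}\geq\mathcal{M}^{-}$. Your argument is shorter and more robust (it only uses Proposition~\ref{Fnproperties}(a),(d) and Proposition~\ref{orthogonalF}); the paper's argument has the minor advantage of being self-contained at the level of $u$ and exhibiting an explicit classical barrier.
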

\begin{proof}

We first  prove that $\beta\geq \alpha$. 
We argue by contradiction, assuming that  $\beta<\alpha$.
Fix $h>0$ and let   $y:=h {\bf \nu}.$ Notice that $|y|=h$ and $\nu$ is the interior normal unit vector  to $B_{h}(y)$ at 0. 
Consider the function 
$$\phi(x)=c\left(\frac{h^\gamma}{|x-y|^\gamma}-1\right),\quad x\neq y,$$ with $\gamma=\frac{\Lambda(n-1)-\lambda}{\lambda}$ and $c>0$.
Then, by Lemma \ref{fondamentalsollem}, 
$$\begin{cases}
\phi(x)>0&\text{if }|x-y|<h\\
\phi(x)<0&\text{if }|x-y|>h\\
\frac{\partial}{\partial \nu}\phi(x)=\frac{c\gamma}{h}&\text{if }|x-y|=h\\
\ppuccin(\phi)(x)=0 &\text{if }x\neq y.
\end{cases}
$$
Since  $\beta<\alpha$, there exists  $\ep>0$ such that $\beta+\ep<\alpha-\ep$. Then, we choose $c>0$ such that 
\begin{equation}\label{chosecalphabeta}\beta+\ep<\frac{\partial}{\partial \nu}\phi_{|_{\partial B_h(y)}}=\frac{c\gamma}{h}<\alpha-\ep.\end{equation}
We want to prove that with this choice of $c$, $\phi\leq u$ in a neighborhood of $0$, for $h$ small enough. 
In order to prove it, we first show that 
$$\phi\leq u\quad\text{on }\partial B_{(1-s)h}(y),$$
for $h$ and $s$ small enough.

Observe that the point $w_1=sh\nu$ belongs to $\partial B_{(1-s)h}(y)$. Moreover,  for any $x\in \partial B_{(1-s)h}(y)$,  
we have that
\begin{equation*}\begin{split}<x,\nu>&\geq <w_1,\nu>= sh,
\end{split}
\end{equation*} 
from which we get
\begin{equation}\label{boudatyusameanfllem}u(x)=\alpha<x,\nu>^+-\beta<x,\nu>^-+o(|x|)=\alpha<x,\nu>^++o(|x|)\geq \alpha sh+ o(h).\end{equation}

Now, let us compute $\phi$ on $ \partial B_{(1-s)h}(y)$. If $|x-y|=(1-s)h$ and $c$ satisfies \eqref{chosecalphabeta}, then
\begin{equation}\label{boudatyusameanfllemphi}\begin{split}\phi(x)=c\left(\frac{1}{(1-s)^\gamma}-1\right)&=\frac{c}{(1-s)^\gamma}\left(1-(1-s)^\gamma\right)
\\&=\frac{c}{(1-s)^\gamma}\left(\gamma s+o(s)\right)\\&\leq \frac{(\alpha-\ep)sh}{(1-s)^\gamma}+o(s)h.\end{split}\end{equation}
Let  $s>0$  be  so small that
$$\frac{(\alpha-\ep)s}{(1-s)^\gamma}+o(s)<\left( \alpha-\frac{\epsilon}{2}\right)s.$$ For such  $s$, let $h$ be so small that 
$$\alpha sh+ o(h)> \left(\alpha-\frac{\epsilon}{2}\right)sh.$$ Then,  
comparing \eqref{boudatyusameanfllem} with \eqref{boudatyusameanfllemphi}, we see that $\phi(x)<u(x)$,  for any $x\in \partial B_{(1-s)h}(y)$.

Next, let us prove that $\phi> u$ on $\partial B_{h(1+s)}(y)$, for suitable small $h$ and $s$. 
Let $w_2=-sh{\bf \nu}$, then  $w_2$ belongs to  $ \partial B_{h(1+s)}(y)$. Moreover, if 
  $x\in \partial B_{h(1+s)}(y)$, then
$$<x,{\bf \nu}>\geq <w_2,{\bf \nu}>=-sh.$$ Therefore, if $x\in \partial B_{h(1+s)}(y)$, and $<x,\nu>\geq 0$, then 
\begin{equation}\label{u2graterphiplane}u(x)=\alpha<x,\nu>^++o(|x|)\geq o(h),
\end{equation} 
and if $<x,\nu>\leq 0$, then 
\begin{equation}\label{u2graterphiplanebis}u(x)=-\beta<x,\nu>^-+o(|x|)\geq -\beta sh +o(h).
\end{equation} 
Let us now compute the value of $\phi$ on $ \partial B_{h(1+s)}(y)$.  
If $|x-y|=h(1+s)$, and $c$ satisfies \eqref{chosecalphabeta}, then  
\begin{equation}\label{boudatyusameanfllemphibeta}\phi(x)=-\frac{c}{(1+s)^\gamma}\left((1+s)^\gamma-1\right)=-\frac{c}{(1+s)^\gamma}\left(\gamma s+o(s)\right)\leq -\frac{(\beta+\ep)sh}{(1+s)^\gamma}+o(s)h. \end{equation}
Let $s$ be so small that 
$$-\frac{(\beta+\ep)s}{(1+s)^\gamma}+o(s)\leq-\left(\beta+\frac{\ep}{2}\right)s.$$
For such  $s$, let $h$ be so small that 
$$-\beta sh +o(h)\geq -\left(\beta+\frac{\epsilon}{2}\right)sh.$$ 
 Then, by \eqref{u2graterphiplane},  \eqref{u2graterphiplanebis} and \eqref{boudatyusameanfllemphibeta}, $\phi<u$ on $ \partial B_{h(1+s)}(y)$. 
Putting all together, we have proven that there exist $s,h>0$ such that
$$\phi<u\quad\text{on }\partial( B_{h(1+s)}(y)\setminus  B_{h(1-s)}(y)).$$
Since, in addition
$$\ppuccin(\phi)=0\geq\puccin(u)\geq \pn(u)\quad\text{on } B_{h(1+s)}(y)\setminus  B_{h(1-s)}(y),$$
the comparison principle combined with the strong maximum principle implies
$$\phi<u\quad\text{in } B_{h(1+s)}(y)\setminus  B_{h(1-s)}(y),$$
which gives a contradiction at $x=0$.

We conclude that we must have $\alpha\leq \beta.$ Arguing similarly as before and using that $\puccip(u)\geq0$ in $\Om$, one can prove that  $\alpha\geq \beta$
and this concludes the proof of the theorem.
\end{proof}

\subsection{Proof of Theorem \ref{limitpbthm}}
Theorem  \ref{limitpbthm} is a corollary of Lemma \ref{planeballconcor} and Theorem \ref{alpha=betathm}.



\section{Proof of Theorem \ref{C1alphaGammathm}}\label{C1alphasec}
Consider  the following  two phase free boundary problem:
\begin{equation}
\label{freebddproblem}
\begin{cases}
\puccin (u) =0\quad\text{in }\Om(u^+)\\
\puccip (u) = 0 \quad\text{in } \Om(u^-)\\
\frac{\partial u^+}{\partial \nu_+}=\frac{\partial u^-}{\partial \nu_-} \quad\text{on } \partial \Om(u^+),
 \end{cases}
 \end{equation}
 where $\nu_\pm$ is the inner normal vector to $\Om(u^\pm)=\{u^\pm>0\}$.  \\
 By Theorem \ref{limitpbthm} we know that any Lipschitz solution to  \eqref{limitproblemk=2} satisfies in the viscosity sense  \eqref{freebddproblem} in $\Om$. 
 Let us recall the definition of viscosity solution of the problem \eqref{freebddproblem} in a given domain $D\subset\R^n$, see  \cite{caffarelli_geometric_2005} for more details. 
 \begin{defn} \label{visco def}Let $u$ be a continuous function in $D$. We say that $u$ is a viscosity solution of the problem \eqref{freebddproblem} in $D$, if the following holds.
 \begin{itemize}
 \item[i)] $u$  satisfies in the viscosity sense 
 \begin{equation*}
\begin{cases}
\puccin (u) =0\quad\text{in }\{u>0\}\cap D\\
\puccip (u) = 0 \quad\text{in }\{u<0\}\cap D.\\
 \end{cases}
 \end{equation*}
 \item[ii)]
 If there exists a tangent ball at $x_0 \in \partial \{u>0\}\cap D$, $B$,  such that either $B \subset \{u>0\}\cap D$ or $B \subset \{u<0\}\cap D$, then
 $$
 u(x)=\alpha <x-x_0,\nu_+>+o(|x-x_0|)
 $$  
 with $\alpha>0$ and  $\nu_+$ the normal vector to  $\partial B$ at $x_0$ pointing inward to $\{u>0\}\cap D$.
 \end{itemize}
 \end{defn}
 
In this section we prove that for any viscosity  solution to the free boundary problem \eqref{freebddproblem}  the following holds: if the free boundary is flat around 0, meaning that it can be trapped in  a small neighborhood of the graph of a Lipschitz function, then in a neighborhood of 0, it is  a $C^{1,\alpha}$ surface.  Theorem \ref{C1alphaGammathm} 
will follow  as  a corollary of this result. 
To prove that flatness implies  $C^{1,\alpha}$, we follow the classical sup-convolution method  developed by Caffarelli in the papers 
\cite{caffarelli_harnack_1987,caffarelli_harnack_1989} for the Laplace operator and extended by Wang \cite{wang_regularity_2000,wang_regularity_2002} to fully-nonlinear elliptic operators. Problem \eqref{freebddproblem} differs from the one studied in  \cite{wang_regularity_2000,wang_regularity_2002} since $u$ satisfies two different equations in 
$\Om(u^+)$ and $\Om(u^-)$. However the regularity theory developed in those papers can be  extended to our problem and some simplifications arise due to the 
specific free boundary condition here considered:   $\frac{\partial u^+}{\partial \nu_+}=\frac{\partial u^-}{\partial \nu_-}$.

Following the classical theory, we first prove that Lipschitz free boundaries are $C^{1,\alpha}$ and then we prove that flat free boundaries are Lipschitz.
 
\subsection{Lipschitz free boundaries are $C^{1,\alpha}$}
For $r>0$, let  $\mathcal{C}_r$  be the cylinder defined as $\mathcal{C}_r:=B'_r(0)\times (-r,r)$, where $B'_r(0)$ is the ball centered at 0 of radius $r$  of $\real^{n-1}$.

\begin{prop} 
\label{Caffa1}
Let $u$ be a viscosity solution of the problem \eqref{freebddproblem} in $\mathcal{C}_1=B'_1(0)\times (-1,1)$. Assume that $0 \in \partial \Om(u^+)$ and that $$\mathcal{C}_1\cap \Om(u^+)=\{(x', x_n)\,|\,x_n>g(x')\}$$ where $g$  is a Lipschitz continuous function. Then in $ B'_{\frac{1}{2}}(0)$, $g$ is a $C^{1,\alpha}$-function, for some $0<\alpha\le 1$.  
\end{prop}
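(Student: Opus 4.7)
The plan is to follow the classical Caffarelli strategy, as extended by Wang \cite{wang_regularity_2000,wang_regularity_2002} to concave fully nonlinear operators, based on iteratively improving a cone of monotonicity. The Lipschitz hypothesis on $g$ is first encoded as monotonicity of $u$: if $|\nabla g|\leq L$ on $B'_1(0)$, then $u(x+h\tau)\geq u(x)$ for all small $h>0$ and every direction $\tau$ in the cone $\Gamma(\theta_0,e_n):=\{\tau\,:\,\langle\tau,e_n\rangle\geq\cos\theta_0\,|\tau|\}$, with $\cot\theta_0=L$. This monotonicity holds in both $\Om(u^+)$ and $\Om(u^-)$, since the Lipschitz graph separates the two phases.

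Next I would linearize along $\tau$: set $w(x):=u(x+h\tau)-u(x)\geq 0$. Because $\puccin$ and $\puccip$ are uniformly elliptic (Proposition \ref{Fnproperties}(e)), $w$ satisfies $\ppuccin(w)\leq 0$ separately in the translated positivity set of $u$ and in the translated negativity set; hence a weak Harnack inequality applies to $w$ in each phase. The crucial step, and the core of Caffarelli's original argument, is to propagate a positive lower bound for $w$ across the free boundary. One constructs a family of radial subsolutions tangent to $\Gamma$ and uses the boundary condition $u^+_{\nu_+}=u^-_{\nu_-}$ to show that if $w\geq c$ in a ball in $\Om(u^+)$, then $w\geq c'$ in a comparable ball in $\Om(u^-)$ at a slightly smaller scale (and conversely). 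Combining this with Harnack in each phase yields an interior gain: at the new scale, there is some direction $\tau_0\in\partial\Gamma(\theta_0,e_n)$ along which $w\geq \delta h$ for a definite $\delta>0$ depending only on $n,\lambda,\Lambda,L$.

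This interior lower bound is equivalent to $u$ being monotone in a strictly wider cone $\Gamma(\theta_1,\tilde e)$ at the smaller scale, with $(\pi/2-\theta_1)\leq\mu(\pi/2-\theta_0)$ for some $\mu=\mu(n,\lambda,\Lambda)\in(0,1)$. Iterating the improvement across dyadic scales produces a nested sequence of cones of monotonicity converging to a half-space at a geometric rate. Translated to the geometric side, this says that the inward normal $\nu(x_0)$ to $\partial\Om(u^+)$ has a H\"older modulus of continuity, which is precisely the $C^{1,\alpha}$ regularity of $g$ on $B'_{1/2}(0)$, with $\alpha=\alpha(\mu)\in(0,1]$.

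The main obstacle is the cross-boundary propagation step. One must verify that the Caffarelli--Wang barrier construction and the subsequent interior-gain argument adapt to the present setting, where the two phases obey \emph{different} operators $\puccin$ and $\puccip$. Fortunately the construction depends only on the Pucci-type ellipticity bounds in Proposition \ref{Fnproperties}(e) and on the free boundary condition being the symmetric one, $u^+_{\nu_+}=u^-_{\nu_-}$; since both ingredients are available here, the argument adapts essentially verbatim. A secondary technical point is that, $u$ being only Lipschitz a priori, the linearization has to be carried out via incremental quotients and viscosity sup-/inf-convolutions, exactly as already exemplified in Claim 2 of the proof of Theorem \ref{lipschthm}.
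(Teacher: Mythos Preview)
Your overall strategy—encode the Lipschitz graph as a cone of monotonicity $\Gamma(\theta_0,e_n)$ for $u$, improve the cone opening at each dyadic scale, and read off $C^{1,\alpha}$ from the geometric decay of the defect angle—is correct and matches the paper's Steps~1 and~6. The mechanism you propose for the improvement step, however, is not the one the paper (following Wang) uses, and as written it leaves the hardest step undone.

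You propose to linearize: work with $w(x)=u(x+h\tau)-u(x)\geq 0$, apply Harnack to $w$ in each phase, and then ``propagate a positive lower bound for $w$ across the free boundary'' via radial barriers and the condition $u^+_{\nu_+}=u^-_{\nu_-}$. That last sentence is precisely where the work lies, and you do not explain how the free boundary condition is actually invoked at the level of $w$. Near $\Gamma$ the translates $u(\cdot+h\tau)$ and $u$ are not in the same phase, so $w$ does not satisfy a clean equation there; this is exactly why the direct linearization route is delicate for two-phase problems and why Caffarelli introduced the sup-convolution device.

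The paper carries the interior gain to $\Gamma$ by a different, more robust mechanism (Steps~3--5): it builds the \emph{variable-radius} sup-convolution family
\[
w_t(x)=\sup_{z\in B_{\ep\varphi_{bt}(x)}(x)}u(z-\tau),\qquad 0\le t\le 1,
\]
with $C^2$ radii $\varphi_t$. Each $w_t$ is a subsolution of $\Fn$ in $\Om(w_t^+)$ and of $\Fp$ in $\Om(w_t^-)$, and at its own free boundary one has $w_t(x)\geq\bar\alpha\langle x-x_1,\bar\nu\rangle+o(|x-x_1|)$. A continuity argument in $t$ then shows $w_t\le u$ for all $t$: the free boundaries of $w_t$ and $u$ cannot touch, because at a touching point Hopf applied to $u-w_t\ge 0$ in $\Om(w_t^+)$ would force a strictly positive normal slope, contradicting the matched asymptotics coming from the free boundary condition. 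This is where $u^+_{\nu_+}=u^-_{\nu_-}$ enters, and it is the reason the paper can dispense with the correctors needed for more general $G(u^+_\nu,u^-_\nu)=0$.

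Finally, your last paragraph misidentifies the role of sup-convolutions here with the inf/sup-envelopes in Claim~2 of Theorem~\ref{lipschthm}. Those are a regularization device to justify pointwise second-order computations for viscosity solutions. The sup-convolutions in this proposition are a geometric comparison family whose purpose is to transport the interior cone improvement across $\Gamma$; they are the engine of the proof, not a secondary technicality.
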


\begin{proof}

 The proof of the   proposition   follows by  
 \cite{caffarelli_harnack_1987} (see also \cite{caffarelli_geometric_2005}) and 
  \cite{wang_regularity_2000}. As already pointed out, even though we have different operators on each side of the free boundary, the classical regularity theory still applies. For completion of this paper, we will sketch the main parts of the method highlighting the parts that are simplified in our problem due to the free boundary condition 
in \eqref{freebddproblem}.

 {\bf Step 1: Existence of a cone of monotonicity.} 
 
 By  \cite[Lemma 2.5]{wang_regularity_2000} applied to $u^+$ and the operator $\puccin$,  there exists $\delta>0$ such that  $\partial_{x_n} u^+ \geq 0$ in the set $\C_{\delta} \cap \{x_n > g(x')\}$. Also, applying the same Lemma to $u^-$ and  the operator $F(u)=-\puccip(-u) $,  we have  that $\partial_{-x_n} u^-=-\partial_{x_n} u^- \geq 0$ on the set $\mathcal{C}_{\delta} \cap \{x_n < g(x')\}$. Thus, since $u=u^+-u^-$, we conclude that $u$ is monotone increasing in the direction of $e_n=(0,\ldots,0,1)$ in $\mathcal{C}_{\delta}$.
 The same is true for any direction $\tau$ in the cone determined by $L$, the Lipschitz constant   of $g$; that is, 
 let $\Gamma(\theta,e_n)$ be the cone with axis $e_n$ and semi-opening $\theta$ given by $\mathrm{cotan}\,\theta=L$, then
$u$ is monotone increasing in the direction of $\tau \in \Gamma(\theta,e_n),$ in $ \mathcal{C}_{\delta}$.
$\Gamma(\theta,e_n)$ is called the monotonicity cone of $u$. 
 \medskip

 {\bf Step 2: Improvement of the Lipschitz regularity away from the free boundary.}
 
We may suppose that the monotonicity cone  exists for all points $x\in \mathcal{C}_1$, by using, if necessary, the invariance by elliptic dilation of the problem.
The monotonicity of $u$  along the directions of  $ \Gamma\left(\theta,e_n\right)$ implies that  for every small $\tau\in\Gamma\left(\frac{\theta}2,e_n\right)$,
\begin{equation}\label{conemonotonicity}\sup_{z\in B_\ep(x)}u(z-\tau)\leq u(x),
\end{equation}
 for every $x\in \mathcal{C}_{1-\ep}$, where $\ep=|\tau|\sin\left(\frac{\theta}2\right). $
 Let $x_0:=\frac34 e_n\in \mathcal{C}_1$. The proof of Lemma 4.6 of  \cite{caffarelli_geometric_2005} which uses Harnack inequality and Schauder estimates, can be adapted to our case 
 to improve the opening of the monotonicity cone in a neighborhood of $x_0$. The result goes as follows:  there exist positive constants $b$ and $c$  such that for every small $\tau\in \Gamma\left(\frac{\theta}2,e_n\right)$ and every $x\in B_\frac{1}{8}(x_0)$
 \begin{equation}\label{improvcone}\sup_{z\in B_{(1+ b)\ep}(x)}u(z-\tau)\leq u(x)-c\ep u(x_0),\end{equation}
with $\ep=|\tau|\sin\left(\frac{\theta}2\right).$
  \medskip
  
 {\bf Step 3: Construction of a family of subsolutions of variable radii.}
 
Here the main technique is the sup-convolution method to construct a family of subsolutions of the form 
$$w_\var(x)=\sup_{z\in B_{\var(x)}(x)}u(z -\tau),$$ for small $\tau\in \Gamma\left(\frac{\theta}2,e_n\right)$,   to compare with the solution $u$ of \eqref{freebddproblem}.
In order to apply the comparison principle, it is necessary to study the properties of the sup-convolution function and 
since problem \eqref{freebddproblem} is invariant by translations, it is enough to do it before translations, that is with $u(\cdot -\tau)$ replaced by $u$. 

For $0<r\le\frac18$,  $0<h<1$, there exists a family of functions  $\var_t$, $0\leq t\leq 1$,  with 
$\var_t\in C^2\left( \overline{B_1(0)}\setminus B_{\frac r2}(x_0)\right)$, $x_0=\frac34 e_n$,
 with the following properties:
 \begin{itemize}
\item[(a)] $1\leq \var_t\leq 1+t h$,
\item [(b)] $\varphi_t\equiv 1$ outside $B_\frac{7}{8}(0)$,
\item [(c)] $\varphi_t\geq 1+\lambda t h, $ in $B_\frac{1}{2}(0),$ for some $\lambda=\lambda(r)$,
\item [(d)] $|\nabla \varphi_t|\le Cth$.  
\end{itemize}
Moreover, if we define 
 \begin{equation}
 \label{before}
 v_{\var_t}(x):=\sup_{z\in B_{{\var_t}(x)}(x)} u (z),
 \end{equation}
\begin{itemize}
\item[(e)] then   
\begin{equation*}
 \puccin \left( v_{\var_t}\right) \geq 0\quad \textnormal{in } \Omega(v_{\var_t}^+),
 \end{equation*} 
 \begin{equation*}
 \puccip \left( v_{\var_t}\right) \geq 0\quad \textnormal{in } \Omega(v_{\var_t}^-),
 \end{equation*} 
 \end{itemize}
  and if  $|\nabla \varphi_t|<1$ then
 \begin{itemize}
 \item [(f)] for every point of $\partial \Omega(v_{\var_t}^+)$ there is a tangent ball contained in  $ \Omega(v_{\var_t}^+)$,
 \item [(g)] for every point $x_1\in \partial \Omega(v_{\var_t}^+),\,$ there exists  $\overline{\alpha}$ such that 
 \begin{equation}\label{freeboundarycondvphi2}
  v_{\var_t}(x)\geq  \bar{\alpha} <x-x_1,\bar{\nu}> + o(|x-x_1|),
\end{equation} 
where $\bar{\nu}$ is the normal vector of $\partial \Omega(v_{\var_t}^+)$ pointing inward $\Omega(v_{\var_t}^+)$.
 \end{itemize}


Properties (a)-(e) are proven in     \cite[Lemmas 3.4, 3.5]{wang_regularity_2000}. Since in  \cite{wang_regularity_2000} only concave operators (like $\Fn$) are considered, for the second inequality in (e) we refer  to  \cite[ Proposition 1.1]{Fel} where more general operators, not necessary concave,  are taken into account. Property (f) is proven in  \cite[Lemma 4.9]{caffarelli_geometric_2005}. 
 Let us prove (g).
 Note that $u \leq  v_{\var_t}$, therefore $\Om(u^+)\subset \Omega(v_{\var_t}^+)$. Now,  let $x_1\in\partial \Omega(v_{\var_t}^+)$,
 then there exists $y_1 \in \partial \Omega(u^+)$ such that $ v_{\var_t}(x_1)=u(y_1)=0$. Note that we must have $y_1 \in \partial B_{\var_t(x_1)}(x_1)$. 
 Thus, $B_{\var_t(x_1)}(x_1)$ is tangent to $\partial \Omega(u^+)$ at $y_1$ contained in $\Omega(u^-)$ and according to Definition \ref{visco def}  we have that
 \begin{equation}
 u(y)=\alpha  <y-y_1, {\bf \nu}> +o(|y-y_1|),
 \end{equation} where $\nu$ is the unit normal vector to $\partial\Om(u^+)$ at $y_1$ pointing inward $\Om(u^+)$.
 If  $y=x+ {\var_t}(x){\bf \nu}$, since  $y_1=x_1+ {\var_t}(x_1){\bf \nu}$, we obtain the asymptotic 
behavior of $v_{\var_t}$ in a neighborhood of $x_1$:
\begin{eqnarray}
v_{{\var_t}}(x) &\geq & u(y) \nonumber\\
&=& \alpha  <x+ {\var_t}(x) \nu -x_1-{\var_t}(x_1)\nu, {\bf \nu}> + o(|x-x_1|) \nonumber \\
&=& \alpha  <x-x_1 +({\var_t}(x)-{\var_t}(x_1))\nu, {\bf \nu}> +o(|x-x_1|) \nonumber.
\end{eqnarray}
We replace ${\var_t} (x)-{\var_t}(x_1)$ by $<x-x_1, \nabla {\var_t}(x_1)> + o(|x-x_1|)$ in the previous inequality and simplify to obtain
\begin{equation*}
v_{{\var_t}}(x) \geq \alpha <x-x_1,\nu + \nabla{\var_t} (x_1)> + o(|x-x_1|).
\end{equation*}
Thus, if we let
\begin{equation*}
\bar{\alpha}:=\alpha|\nu + \nabla {\var_t}(x_1)|, \quad \bar{\nu}:=\frac{\nu + \nabla {\var_t}(x_1)}{|\nu + \nabla {\var_t}(x_1)|},
\end{equation*} we obtain \eqref{freeboundarycondvphi2}. 
By  Lemma 4.9 in \cite{caffarelli_geometric_2005},   $\bar{\nu}$ is the unit normal vector to $\partial \Om^+(v_{{\var_t}})$ at $x_1$ pointing inward $\Om^+(v_{{\var_t}})$.
We note that in our problem we do not need the correctors used in the sup-convolution method to obtain the correct asymptotic behavior of $v_{{\var_t}}$ on points on the free boundary (see \cite[Lemma 4.12]{caffarelli_geometric_2005}).
\medskip

{\bf Step 4: Comparison with subsolutions.} 

In what follows, we will have to compare the solution $u$ of \eqref{freebddproblem}  with the functions
\begin{equation}\label{susolutionfamt}w_t(x):=\sup_{z\in B_{\ep\var_{bt}(x)}(x)}u(z -\tau),\quad x\in D,\end{equation}
  for small $\tau\in \Gamma\left(\frac{\theta}2,e_n\right)$, where $D:=B_\frac{9}{10}(0)\setminus B_\frac{1}{8}(x_0)$, 
 $b$ is defined in \eqref{improvcone}, $\ep=|\tau|\sin\left(\frac{\theta}2\right)$ and $\var_t$ is the family of functions defined in Step 3. By (d) in Step 3 we can choose $h$ small so that $\ep|\nabla\var_{bt}|<1$, therefore by (f), we have that 
\begin{equation}\label{regulapoinmtvarbt}
\text{for every point of }\partial \Omega(w_{\var_t}^+)\text{ there is  a tangent ball contained in }\Omega(w_{\var_t}^+).
\end{equation}
Now, having on hands \eqref{regulapoinmtvarbt} and  the asymptotic development \eqref{freeboundarycondvphi2}  we can  show the  following comparison result between $u$ and $w_t$: suppose that 
\begin{equation}\label{u>wt}u\geq w_t \text{ in }D,\quad u>w_t\quad \text{in }\Om(w_t^+),\text{ then }\partial\Om(w_t^+) \text{ and }\partial\Om(w_t^+)\text{ cannot touch}.\end{equation}

The proof is given in \cite[Lemma 2.1]{caffarelli_geometric_2005}. We perform it here for reader's convenience.
By \eqref{u>wt}, we know that $\Om(w_t^+)\subset\Om(u^+)$. Suppose by contradiction that  there exists 
$x_1 \in \partial\Om(w_t^+)\cap\partial\Om(u^+)$, then, by \eqref{regulapoinmtvarbt}, there exists a tangent  ball  to $\partial \Om(u^+)$ at $x_1$ contained in $\Om(u^+)$ . Thus,  according to  Definition \ref{freebddproblem}, we have  
\begin{equation}
\label{aqui}
u(x)=\alpha  <x-x_1, {\bf \nu}>+o(|x-x_1|).
\end{equation}
and by \eqref{freeboundarycondvphi2}, there exists $\eta >0$ such that 
\begin{equation}
\label{alla}
w_t(x) \geq \eta <x-x_1,\nu> + o(|x-x_1|).
\end{equation}
Note that here $\bar{\nu}=\nu$. 
Since $w_t \leq u$ and $w_t(x_1)=u(x_1)=0$, by \eqref{aqui} and \eqref{alla},  it follows  that 
\begin{equation}
\label{dolordecabeza}
\alpha=\eta.
\end{equation} 
We have that $u-w_t$ is a supersolution for $\puccin$ in $\Omega(w_t^+)$, since  by (c) in Proposition \ref{Fnproperties}, \eqref{freebddproblem} and 
(e), in 
$\Om(w_t^+)\subset\Om(u^+)$ we have
$$  0=\puccin(u)\geq  \puccin(u-w_t)+\puccin(w_t)\geq  \puccin(u-w_t).$$
Since $u > w_t$ in $\Omega(w_t)$, by the Hopf principle there exists $\delta>0$ such that 
\begin{equation*}
(u-w_t)(x_1+h\nu) \ge \delta h,
\end{equation*}  for all small $h>0$. This is a contradiction, since by \eqref{aqui},  \eqref{alla} and \eqref{dolordecabeza}, we have that
\begin{equation*}
(u-w_t)(x
_1+h\nu) \le o(h).
\end{equation*}
 Thus, we conclude that $\partial\Om(w_t^+)$ and $\partial\Om(u^+)$ cannot touch. 
\medskip


{\bf Step 5:  Carrying the improvement of Step 2 to the free boundary.}

 The improvement obtained in Step  2 needs to be carried to the free boundary, in $B_{1/2}(0)$, giving up a little bit of the interior improvement.  
 
 In order to do this, we consider the family of functions $w_t$ defined in \eqref{susolutionfamt}. Let $D:=B_\frac{9}{10}(0)\setminus B_\frac{1}{8}(x_0)$,
 let us check that the following conditions are satisfied:
 \begin{itemize}
 \item[i)] $w_0\le u$ in $D$,
 \item[ii)] $w_t\le u$ on $\partial D$ and $w_t<u$ in $\overline{\Om(w_t^+)}\cap\partial D$,
 \item[iii)] the family $\Om(w_t^+)$ is uniformly continuous, that is, for every $\ep>0$, 
 $$\Om(w_{t_1}^+)\subset \mathcal{N}_\ep( \Om(w_{t_2}^+))$$ 
 whenever $|t_1-t_2|<\delta(\ep)$, where $\mathcal{N}_\ep( \Om(w_{t_2}^+))$ is a $\ep$-neighborhood of $\Om(w_{t_2}^+)$.
 \end{itemize}
 
 By (a) in Step 3, $\varphi_0\equiv1$ and thus by \eqref{conemonotonicity}, if $x\in D$,  we have
\begin{equation} 
\label{initial}
 w_0(x)=\sup_{z\in B_\ep(x)} u(z-\tau)\le u(x),
\end{equation}
which is (i).

 By (b) in Step 3, and \eqref{conemonotonicity} if $x\in\partial B_\frac{9}{10}(0)$,  then 
  \begin{equation}\label{compawtupartB1}w_t(x)=\sup_{z\in B_\ep(x)} u(z-\tau)\le u(x),\end{equation} and the inequality is strict in
  $\overline{\Om(w_t^+)}$, by  taking  any $\ep'<\ep$ if necessary.
  If $x\in\partial B_\frac{1}{8}(x_0)$ by  (a) of Step 3 and \eqref{improvcone}, we have that (since $t,\,h\le1$), 
  \begin{equation}
  \label{compawtupartbx0}w_t(x)\leq\sup_{z\in B_{(1+tbh)\ep}(x)}u(z-\tau)\leq \sup_{z\in B_{(1+b)\ep}(x)}u(z-\tau)< u(x). \end{equation}
  Combining \eqref{compawtupartB1} and \eqref{compawtupartbx0} yields (ii).
  
 
 Finally, (iii) follows from the definition of the functions $w_t$, \eqref{susolutionfamt}.

Now, from (i)-(iii) and by using \eqref{u>wt}, we can conclude that
  \begin{equation}\label{maincomparison}w_t\le u\text{ in } D\text{ for every } t\in [0,1].\end{equation}
  The proof of \eqref{maincomparison} is given  in  \cite[Theorem 2.2]{caffarelli_geometric_2005} in the case of   the Laplace operator and we present it here for the sake of completeness. For that, let $E:=\{t\in[0,1]\,|\,v_t\le u\text{ in }\overline{D}\}$. By (i)  $0\in E$. $E$ is obviously closed. Let us show that it is open.
  If $t_0\in E$, that is $v_{t_0}\leq u$ in $D$, from (ii) and the strong maximum principle it follows that  $v_{t_0}< u$ in $\Om(v_{t_0}^+)\cap D$. 
  By  (ii) and \eqref{u>wt} we have that $\overline{\Om(v_{t_0})}\cap  D$ is compactly supported in $\Om(u^+)\cap D$ up to the boundary of $D$.
  From (iii), there exists $\delta>0$ such that $\overline{\Om(v_{t})}\cap D$ is compactly supported in $\Om(u^+)\cap D$ for all $t$ such that $|t-t_0|<\delta$.
  Thus, for such values of $t$, by (ii) and (e) of Claim 1 we have
  $$ \puccin \left( v_{\var_t}\right) \geq 0=\puccin(u)\quad \textnormal{in } \Omega(v_{\var_t}^+)\cap D,$$
  $$ v_{\var_t}\le u \quad\textnormal{on } \partial(\Omega(v_{\var_t}^+)\cap D)$$ and by the comparison principle,
  $v_{\var_t}\le u$ in $ \Omega(v_{\var_t}^+)\cap D$. Similarly, since 
   $$ \puccip \left( v_{\var_t}\right) \geq 0=\puccip(u)\quad \textnormal{in } \Omega(u^-)\cap D,$$
   and $$ v_{\var_t}\le u\quad \textnormal{on } \partial(\Omega(u^-)\cap D), $$ we have that $v_{\var_t}\le u$ in $ \Omega(u^-)\cap D$.
   Clearly $v_{\var_t}\le 0\le u$ in $\overline{\Om(u^+)}\cap \overline{ \Omega(v_{\var_t}^-)}\cap D$. We conclude that $v_{\var_t}\le u$ in $D$ and the openness of $E$ follows.
   Since $E$ is both an open and closed nonempty subset of $[0,1]$, we must have $E=[0,1]$. This proves \eqref{maincomparison}.

 Inequality \eqref{maincomparison} holds in particular for $t=1$ and hence using (c) in Step 3 we obtain that, on $B_{1/2}(0)$,
\begin{eqnarray*}
 u &\geq& w_{1} \\
    &=& \sup_{z\in B_{\ep \var_{b}(x)}(x)}u(z-\tau)\\
    &\geq& \sup_{z\in B_{\ep (1+ (\lambda h) b)}(x)}u(z-\tau),
\end{eqnarray*} 
which implies the desired improvement of the cone of monotonicity across the free boundary. The original radius $\ep$ in \eqref{initial} was first improved to $\ep + \ep b$ far from the free boundary (see \eqref{improvcone}),
and at the free boundary the radius became $\ep + (\lambda h) \ep b$. Since $\lambda h <1$, a little bit of opening in the cone has to be given up in order to bring the improvement across the free boundary (see 
Theorem 4.2 and Lemma 4.4 in \cite{caffarelli_geometric_2005} for details).
\medskip

{\bf Step 6: Basic iteration.}
 
Rescaling and repeating  Steps 2-5 we obtain that the free boundary is $C^{1,\alpha}$ in $\mathcal{C}_\frac{1}{2}$, see the proof of Theorem 4.1 in  \cite{caffarelli_geometric_2005} for details.

\end{proof}

\subsection{Flat free boundaries are Lipschitz}
In this subsection we prove that 
if $u$
is a solution of the free boundary problem \eqref{freebddproblem} and the free boundary can be trapped in a narrow neighborhood in between two Lipschitz graphs, then the free boundary is actually Lipschitz.
Let us recall the definition of $\ep$-monotone function. 
\begin{defn}\label{uepmondef1}
We say that $u$ is $\ep$-monotone in the cylinder $\mathcal{C}_1$ along a direction $\tau$, with $|\tau|=1$, if for all $x\in \mathcal{C}_1$, 
$$u(x+l\tau)\ge u(x),$$ for all $l\geq \ep$ such that  $x+l\tau\in \mathcal{C}_1$.
\end{defn}
The $\ep$-monotonicity can be reformulated equivalently as follows, see \cite{caffarelli_geometric_2005}. 

\begin{defn}\label{uepmondef2}
We say that $u$ is $\ep_0$-monotone   in the cylinder $\mathcal{C}_1$ along the directions of  the cone $\Gamma(\theta,e)$ if  for all $x\in \mathcal{C}_1$,
$$\sup_{y\in B_{\ep\sin \theta}(x)}u(y-\ep e)\leq u(x),$$
for any $\ep\ge\ep_0$ such that $B_{\ep\sin \theta }(x-\ep e)\subset \mathcal{C}_1$.
\end{defn}
As in Subsection \ref{C1alphasec}, in the definition above $\Gamma(\theta,e)$ denotes the cone of semi-opening $\theta$ and axis $e$.
\begin{remark}\label{lipremtrap}
If  $u$ is $\ep$-monotone in $\mathcal{C}_1$ according to Definition \ref{uepmondef2}, then the level surfaces  of $u$ 
in $\C_1$, $\partial\{u>t\}$,  are contained in a $(1-\sin \theta)\ep $ size of the graph of a Lipschitz function $g$ with Lipschitz constant $L= \mathrm{cotan }\, \theta <1$, 
see \cite{caffarelli_geometric_2005}. 
\end{remark}

\begin{prop} 
\label{Caffa2} Let $\frac{\pi}{4}<\theta<\frac{\pi}{2}$ and let $u$ be a viscosity solution of the problem \eqref{freebddproblem} in 
$\mathcal{C}_1=B_1'\times(-1,1)$. Assume that $0 \in \partial \Om(u^+)$. Then there exists $\ep=\ep(\theta)$ such that if $u$ is $\ep$-monotone in 
$\mathcal{C}_{1-\ep}=B_{1-\ep}'\times(-1+\ep,1-\ep)$ along any direction $\tau$ in the cone $\Gamma(\theta,e)$, then $u$ is fully monotone in 
$\mathcal{C}_\frac12=B_\frac12'\times\left(-\frac12,\frac12\right)$ along any direction $\tau\in \Gamma(\theta_1,e)$ with $\theta_1=\theta_1(\theta,\ep)$.
\end{prop}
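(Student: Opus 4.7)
The plan is to adapt Caffarelli's improvement-of-monotonicity technique, following the five-step scheme used in the proof of Proposition \ref{Caffa1}, to upgrade $\ep$-monotonicity into full monotonicity on a dyadic sequence of scales, losing only a controlled amount of opening of the cone at each step. The reformulation in Definition \ref{uepmondef2} asserts that for every $\tau\in\Gamma(\theta,e)$,
$$\sup_{y\in B_{\ep\sin\theta}(x)} u(y-\ep\tau)\le u(x)\qquad\text{whenever } B_{\ep\sin\theta}(x-\ep\tau)\subset\mathcal{C}_{1-\ep},$$
and the target at each step is to replace $\ep$ by $\sigma\ep$ for a universal $\sigma<1$, after possibly shrinking the cone and the cylinder by controlled amounts.

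First I would produce an interior improvement. Fix an anchor point $x_0=\tfrac34 e_n$, which, at the cost of taking $\ep$ small, can be arranged to lie in $\Om(u^+)$ by the flatness guaranteed by Remark \ref{lipremtrap}. In a neighborhood of $x_0$, $u=u^+$ solves $\puccin(u)=0$ and is bounded below by a positive constant via the Harnack inequality of Remark \ref{FpFnremark}; combining this with the interior $C^{2,\alpha}$-estimates available for $\puccin$ gives a universal gain
$$\sup_{y\in B_{(1+b)\ep\sin\theta}(x)} u(y-\ep\tau)\le u(x)-c\,\ep\,u(x_0),\qquad x\in B_{1/8}(x_0),$$
with $b,c>0$ depending only on $n,\lambda,\Lambda,\theta$. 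The symmetric improvement in $\Om(u^-)$ follows from the same argument applied to $-u$ and $\puccip$.

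Next I would propagate this interior gain to the free boundary using the variable-radius sup-convolutions of Step 3 of the proof of Proposition \ref{Caffa1}. With the smooth profiles $\varphi_t$ satisfying properties (a)--(g) of that step, set
$$w_t(x):=\sup_{z\in B_{\ep\,\varphi_{bt}(x)}(x)} u(z-\ep\tau),\qquad t\in[0,1].$$
Property (e) guarantees that $w_t$ is a viscosity subsolution for $\puccin$ in $\Om(w_t^+)$ and for $\puccip$ in $\Om(w_t^-)$, while (f)--(g) provide, at each point of $\partial\Om(w_t^+)$, a tangent ball contained in $\Om(w_t^+)$ and the correct one-sided asymptotic \eqref{freeboundarycondvphi2}. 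I would then run the continuity-in-$t$ argument of Step 5 in $D=B_{9/10}(0)\setminus B_{1/8}(x_0)$: conditions (i)--(iii) are checked exactly as in the proof of Proposition \ref{Caffa1}, the key input being the free-boundary comparison principle \eqref{u>wt}. This yields $w_1\le u$ in $D$, hence on $B_{1/2}(0)$,
$$\sup_{z\in B_{\ep(1+\lambda b h)\sin\theta}(x)} u(z-\ep\tau)\le u(x),$$
which is precisely the desired $\sigma\ep$-monotonicity with $\sigma=(1+\lambda b h)^{-1}<1$.

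Finally I would iterate at dyadic scales. Setting $u_k(x):=2^k u(2^{-k}x)/\|u\|_{L^\infty(\mathcal{C}_1)}$ and using the $1$-homogeneity of $\mathcal{F}^\pm$, each rescaling preserves the free boundary problem \eqref{freebddproblem}, while the base step just proved shows that the monotonicity parameter becomes $\ep_{k+1}=\sigma\ep_k$ provided the cone is slightly reduced, $\theta_{k+1}=\theta_k-\omega_k$ with $\sum_k\omega_k<\infty$. Since $\ep_k=\sigma^k\ep\to0$ and $\theta_1:=\theta-\sum_k\omega_k>0$ depends only on $\theta$ and $\ep$, passing to the limit yields full monotonicity of $u$ in $\mathcal{C}_{1/2}$ along every direction in $\Gamma(\theta_1,e)$, as claimed. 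The main obstacle is the sup-convolution step: one must verify that $w_t$ has the correct one-sided expansion at every free boundary point (property (g)) and that the continuity argument closes across the interface in the presence of two distinct operators. As indicated in Step 4 of the proof of Proposition \ref{Caffa1}, this is handled by applying the Hopf lemma separately on each side, using concavity of $\puccin$ on $\Om(u^+)$ and convexity of $\puccip$ on $\Om(u^-)$, together with the symmetric free boundary condition $\partial u^+/\partial\nu_+=\partial u^-/\partial\nu_-$, which forces $\alpha=\eta$ in the touching-points analysis and thereby allows the ordering $w_t\le u$ to propagate through the free boundary.
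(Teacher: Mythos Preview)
Your proposal recycles the scheme of Proposition~\ref{Caffa1} too literally, and in doing so it confuses the two distinct improvement mechanisms. In Proposition~\ref{Caffa1} the solution is \emph{fully} monotone to begin with, and the iteration enlarges the opening of the monotonicity cone; here the goal is the opposite: to shrink the parameter $\ep$ (at the cost of a small loss of cone opening). Your key claim that
\[
\sup_{z\in B_{\ep(1+\lambda b h)\sin\theta}(x)} u(z-\ep\tau)\le u(x)
\]
``is precisely the desired $\sigma\ep$-monotonicity with $\sigma=(1+\lambda b h)^{-1}$'' is not correct: enlarging the ball while keeping the \emph{same} translation $\ep\tau$ is an improvement of the cone opening (the Caffa1 gain), not a reduction of $\ep$. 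To obtain $\lambda\ep$-monotonicity one needs instead to compare with translates by $\lambda\ep e$ and a correspondingly scaled ball, and this requires a different construction.

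The paper's proof reflects this difference in every step. Step~1 is not existence of a cone of monotonicity, but the observation (Wang \cite{wang_regularity_2002}) that $\ep$-monotonicity already implies \emph{full} monotonicity \emph{outside} an $M\ep$-strip $\mathcal{N}_{M\ep}$ of the free boundary. The family $\varphi_t$ (Step~2) is built on the strip $\mathcal{C}\cap\mathcal{N}_{2M\ep}(A)$ around the Lipschitz graph $A$, with $\varphi_t\equiv 1$ near the lateral boundary and with $\delta=\ep^{1/2}$; it is not the family of Proposition~\ref{Caffa1} centered at $x_0=\tfrac34 e_n$. The comparison functions are
\[
w_t(x)=\sup_{z\in B_{\sigma\varphi_t(x)}(x)} u(z-\lambda\ep e_n),\qquad \sigma=\ep(\sin\theta-(1-\lambda)),
\]
so that on the lateral part of $\partial(\mathcal{N}_{2M\ep}(A)\cap\mathcal{C}_{1-4\ep})$ one uses the raw $\ep$-monotonicity (inequality \eqref{compasubflat1}), while on the inner boundary $\partial\mathcal{N}_{2M\ep}(A)$ one uses the full monotonicity from Step~1 (inequality \eqref{compasubflat2}). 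The continuity argument then gives $w_{\overline t}\le u$ in the strip, and property~(c) of $\varphi_t$ converts this into $\lambda\ep$-monotonicity in the cone $\Gamma(\theta-c_0\ep^{1/4},e_n)$. It is this $\ep\mapsto\lambda\ep$ step, with loss $c_0\ep^{1/4}$ summable under iteration, that yields full monotonicity in $\mathcal{C}_{1/2}$. Your comparison domain $D=B_{9/10}(0)\setminus B_{1/8}(x_0)$ and the Caffa1 profiles do not provide the correct boundary data for this purpose, and your interior-gain argument, which improves the radius rather than the translation, does not feed into an $\ep$-reduction.
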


\begin{proof}

 The proof of this result follows from 
 \cite{caffarelli_harnack_1987} (see also \cite{caffarelli_geometric_2005}) and 
  \cite{wang_regularity_2000}. 
 We will sketch the proof below.
 
  \medskip

{\bf Step 1: Full monotonicity of $u$ outside a strip of size $M\ep$ of the free boundary.} 

 By Lemma 1 in \cite{wang_regularity_2002} there exists $M>1$ such that in $\mathcal{C}_1\setminus \mathcal{N}_{M\ep}$, where
 $$\mathcal{N}_{M\ep}:=\{x\in \mathcal{C}_1\,|\,d(x,\partial\Om(u^+))<M\ep\}$$ 
 $u$ is actually fully monotone along any direction of $\tau\in\Gamma(\theta,e)$. 
 
  \medskip

 {\bf Step 2: Construction of a family of subsolutions of variable radii.}

 Following  the method developed in \cite{caffarelli_harnack_1989}, we need to construct a family of subsolutions of the form 
$$w(x)=\sup_{z\in B_{\var(x)}(x)}u(z -\lambda \ep e),$$ for some $\lambda \in(0,1)$, to compare with the solution $u$ of \eqref{freebddproblem}. 
Up to a change of coordinates, we can assume that 
$$e=e_n.$$
 Since $u$ is $\ep$-monotone, by Remark \ref{lipremtrap}
there exists $g:\R^{n-1}\to\R$ with $g(0)=0$ and Lipschitz constant $L= \mathrm{cotan} \, \theta <1$, such that if
\begin{equation}\label{A}A:=\{(x',x_n)\in\R^n\,|\,x_n=g(x')\},\end{equation} then 
\begin{equation}\label{partialonflat}\partial\Om(u^+)\subset \mathcal{N}_\ep(A),\end{equation}
where 
$$\mathcal{N}_\ep(A):=\{x\in \mathcal{C}_1\,|\,d(x,A)<\ep\}.$$
By Lemmas 2 and  3 in \cite{wang_regularity_2002} and Proposition 1.1 in \cite{Fel}, for any given $\delta>0$, there exists a family of $C^2$-functions,   $\var_t$, $0\leq t\leq 1$, 
defined on $ \mathcal{C}:=\overline{B_1'(0)}\times[-2L,2L]$, 
 with the following properties:
\begin{itemize}
\item[a)] $1\leq \var_t\leq 1+t $,
\item [b)] $\varphi_t\equiv 1$ on  $A_\delta:=\{x\in \mathcal{C}\,|\,d(x,A\cap \partial\mathcal{C})<\delta\}$,
\item [c)] in the set $\{x\in \mathcal{C}\,|\,d(x,\partial \mathcal{C})>\delta\}$,
$$\varphi_t\geq 1+t\left(1-\frac{C\delta}{d(x,\partial \mathcal{C})^2}\right),$$
\item [d)] $|\nabla \varphi_t|\le \frac{Ct}{\delta}.$
\end{itemize}
Moreover,
\begin{itemize}
\item[e)] 
 if we define 
 \begin{equation*}
 v_{\var_t}(x):=\sup_{z\in B_{{\var_t}(x)}(x)} u (z),
 \end{equation*}
  then $v_{\var_t}$  satisfies
  \begin{equation*}
 \puccin \left( v_{\var_t}\right) \geq 0\quad \textnormal{in } \Omega(v_{\var_t}^+),
 \end{equation*} 
  \begin{equation*}
 \puccip \left( v_{\var_t}\right) \geq 0\quad \textnormal{in } \Omega(v_{\var_t}^-),
  \end{equation*} 
 and if $|\nabla \varphi_t|<1$ then
 \item [f)] for every point of $\partial \Omega^+(v_{\var_t})$ there is  a tangent ball contained in $\Omega^+(v_{\var_t})$, 
  \item [g)] if $$0<\sin \overline{\theta} \leq \frac{1}{1+|\nabla\varphi_t|}\left(\sin\theta-\frac{\ep}{2\varphi_t}\cos^2\theta-|\nabla\varphi_t|\right),$$
 then $ v_{\var_t}$ is monotone in the cone $\Gamma(\overline{\theta},e_n)$; in particular its level surfaces are Lipschitz graphs, in the direction of $e_n$, with Lipschitz constant 
 $\overline{L}\leq \mathrm{cotan}\,\overline{\theta}$.
 \end{itemize}
 Finally, as in the proof of Proposition \ref{Caffa1},  if $|\nabla \varphi_t|<1$, the function $v_{\var_t}$ has the following behavior at points of $\partial \Omega(v_{\var_t}^+)$
 \begin{itemize}
\item[h)] for every point $x_1\in \partial \Omega(v_{\var_t}^+)$ there exists  $\bar{\alpha}>0$ such that 
 $$ v_{\var_t}(x)\geq  \bar{\alpha} <x-x_1,\bar{\nu}> + o(|x-x_1|),$$
where
$\bar{\nu}$ is the normal vector of $\partial \Omega(v_{\var_t}^+)$ pointing inward $\Omega(v_{\var_t}^+)$. 
\end{itemize}
\medskip

{\bf Step 3: Comparison with subsolutions.} 
In what follows, we will have to compare the solution $u$ of \eqref{freebddproblem}  with the functions
\begin{equation}\label{susolutionfamtflat}w_t(x):=\sup_{z\in B_{\sigma\var_{t}(x)}(x)}u(z -\lambda\ep e_n),\end{equation}
for  $\sigma,\,\lambda\in(0,1)$ to be determined, where $\var_t$ is the family of functions defined in Step 2.
We first notice that from the $\ep$-monotonicity of $u$ (Definition \ref{uepmondef2}), for $1-\lambda<\sqrt{2}/2$, we have
\begin{equation}\label{compasubflat1}\sup_{z\in B_{\ep(\sin \theta-(1-\lambda))}(x)} u(z -\lambda\ep e_n)\leq 
\sup_{z\in B_{\ep\sin \theta}(x)} u(z -\ep e_n)\leq
u(x),\end{equation}
since $B_{\ep(\sin \theta-(1-\lambda))}(x -\lambda\ep e_n)\subset B_{\ep\sin \theta}(x -\ep e_n)$.

For any $\eta>0$ and $A$ defined as in \eqref{A}, let us denote by $\mathcal{N}_\eta(A)$ the $\eta$-neighborhood of $A$, defined by
$$ \mathcal{N}_\eta(A):=\{x\in\mathcal{C}\,|\,d(x, A)<\eta\}.$$
By Step 1 and \eqref{partialonflat}, $u$ is fully monotone in the directions of $\Gamma(\theta,e_n)$, outside the set  $\mathcal{N}_{2M\ep}(A)$. Therefore, 
\begin{equation}\label{compasubflat2}\sup_{z\in B_{\lambda\ep\sin \theta }(x)} u(z -\lambda\ep e_n)\leq u(x)\quad\text{for }x\not\in\mathcal{N}_{2M\ep}(A).\end{equation}
We now choose
\begin{equation}\label{compasubflat3}\sigma:=\ep(\sin \theta-(1-\lambda)),\quad \lambda\ge \frac32-\frac{\sqrt{2}}{2},\quad \delta:=\ep^\frac12.\end{equation}
Then the family of functions $w_t$ in \eqref{susolutionfamtflat} is well defined in $\mathcal{C}_{1-\ep}\cap \mathcal{N}_{2M\ep}(A) $.
Moreover, (e)-(h) of Step 2 hold true for $\ep$ (and thus $\sigma$) small enough.
Since $\sigma$ defined as in \eqref{compasubflat3} satisfies $\sigma<\lambda \ep\sin \theta$, by (a) of Step 2 we can choose $\overline{t}>0$ so small that
\begin{equation}\label{compasubflat3bis}\sigma\varphi_t\leq \lambda \ep\sin \theta,\quad\text{for }0\le t\le \overline{t}.\end{equation}
By (e)-(h) of   Step 2, the functions $w_t$, $0\le t\le 1$, satisfy
\begin{equation}\label{compasubflat4bis} \puccin \left( w_t\right) \geq 0\quad \textnormal{in } \Omega(w_{t}^+),\end{equation}
\begin{equation}\label{compasubflat4} \text{ for any point of $\partial \Omega(w_t^+)$ there is a tangent ball contained in }\Omega(w_t^+)\end{equation}
 \begin{equation}\begin{split}\label{compasubflat5}& \text{For every point }x_1\in \partial \Omega(w_{t}^+),\,\text{ there exists  }\bar{\alpha}>0 \text{ such that } \\&
  w_{t}(x)\geq  \bar{\alpha} <x-x_1,\bar{\nu}> + o(|x-x_1|).
\end{split}  \end{equation} 
Let us show that for all $0\le t\le \overline{t}$, 
 \begin{equation}\label{compasubflat6}w_t(x)\leq u(x)\quad \text{for }x\in \partial (\mathcal{N}_{2M\ep}(A)\cap \mathcal{C}_{1-4\ep}).
 \end{equation} 
 If $x\in \partial (\mathcal{N}_{2M\ep}(A))\cap \mathcal{C}_{1-4\ep}$, then by \eqref{compasubflat3bis} and \eqref{compasubflat2}, we have that
 \begin{equation}\label{compasubflat7}w_t(x)\leq \sup_{z\in B_{\lambda\ep\sin \theta}(x)} u(z -\lambda\ep e_n)\leq u(x).\end{equation}
 If $x\in  \mathcal{N}_{2M\ep}(A))\cap  \partial (\mathcal{C}_{1-4\ep})$, then, since for $\ep$ small enough $\delta=\ep^{1/2}>4\ep$, by (b) of Step 2, $\varphi_t(x)=1$. Thus, by the definition of $\sigma$ in \eqref{compasubflat3} and \eqref{compasubflat1}, for $x\in  \mathcal{N}_{2M\ep}(A))\cap  \partial (\mathcal{C}_{1-4\ep})$,
 $$w_t(x)=\sup_{z\in B_{\ep(\sin \theta-(1-\lambda))}(x)} u(z -\lambda\ep e_n)\leq u(x).$$
 This concludes the proof of \eqref{compasubflat6}.
 
Finally, by \eqref{compasubflat6} and using that the functions $w_t$ satisfy \eqref{compasubflat4bis}-\eqref{compasubflat5}, arguing as in Step 5 of the proof of Proposition \ref{Caffa1}, we infer that, for 
$0\le t\le\overline{t}$,
 \begin{equation}\label{compasubflat8} w_t(x)\leq u(x)\quad \text{for all }x\in \mathcal{N}_{2M\ep}(A)\cap \mathcal{C}_{1-4\ep}.
 \end{equation} 

\medskip

{\bf Step 4: From the $\ep$-monotonicity to the $\lambda\ep$-monotonicity.} 

Arguing as in  \cite{caffarelli_harnack_1989} (see also Lemma 5.7 in \cite{caffarelli_geometric_2005}), by \eqref{compasubflat8}  and 
 (c) of Step 2, we have that there exists $c_0>0$ such that in $ \mathcal{N}_{2M\ep}(A)\cap \mathcal{C}_{1-4\ep^{1/8}}$
 $$\sup_ {\lambda\ep\sin(\theta-c_0\ep^{1/4})}u(z -\lambda\ep e_n)\leq u(x),$$
 that is $u$ is $\lambda\ep$-monotone in any direction of the cone of directions $\Gamma(\theta-c_0\ep^{1/4},e_n)$. 
 
  \medskip
  
  {\bf Step 5: Basic iteration. }
  
  Rescaling and repeating  Steps 1-4, we obtain that the free boundary is Lipschitz in $\C_\frac{1}{2}$, see the proof of Theorem 5.1 in  \cite{caffarelli_geometric_2005} for details.

\end{proof}

\subsection{Proof of Theorem \ref{C1alphaGammathm}}
Let $u$ be a solution of  \eqref{limitproblemk=2}. 
Then, by Theorem \ref{limitpbthm}, $u$ is a solution of the free boundary problem \eqref{freebddproblem} in the sense of Definition \ref{visco def}.
Let $z\in\Gamma$ be a regular point. Assume without loss of generality that $z=0$. 
By Corollary \ref{flatnessprop}, there exists $r_j\to0$  as $j\to+\infty$ with the following property: for any $\ep>0$ there exists $J\in\N$ such that for any $j\ge J$,  all the level sets of $u_{r_j}(x)=u(r_j x)/r_j$ in $B_2(0)$ are $\ep$-flat. Also, by scaling invariance
$u_{r_j}$ is  solution of  \eqref{freebddproblem} in the cylinder 
$\mathcal{C}_1=B'_1(0)\times (-1,1)$. We can now apply Propositions \ref{Caffa1} and \ref{Caffa2} to conclude that there is $J\in\N$ such that for any $j\ge J$ the set $\partial\,\Om((u_1)_{r_j})\cap B_\frac{1}{4}(0)$ is of class $C^{1,\alpha}$  for some $0<\alpha\le 1$. Therefore, the same is true for $\Gamma\cap B_\frac{r_j}{4}(0)$, as $\Gamma\cap B_\frac{r_j}{4}(0)= r_j\partial \Om((u_1)_{r_j}))\cap B_\frac{1}{4}(0))$. Let us prove that the set of regular points is open in $\Gamma$. 

By the elliptic regularity theory, see Corollary 1.8 in \cite{MR2853528}, $u_1 \in C^{1,\alpha}(\overline{\Om(u_1)}\cap  B_\frac{r_j}{8}(0))$ and $u_2\in C^{1,\alpha}(\overline{\Om(u_2)}\cap  B_\frac{r_j}{8}(0))$, thus
\begin{equation}
u(x)= \frac{\partial u_1}{\partial \nu_1}(0) <x, {\bf \nu}>^+ - \,\frac{\partial u_2}{\partial \nu_2}(0) <x,{\bf \nu}>^-+o(|x|),
\end{equation}
 and by Theorem \ref{alpha=betathm}
$$\frac{\partial u_1}{\partial \nu_1}(0)=\frac{\partial u_2}{\partial \nu_2}(0)>0,$$ where $\nu_i$ is  the interior unit normal vector to $\Om(u_i)$. In particular, $u$ has the asymptotic behavior \eqref{asymptodevumainthm} at 0.
By the $C^{1,\alpha}$ local regularity of $u_1$ and $u_2$ up to the free boundary , there exists $s<r_j/8$, such that: 
\begin{equation}
\frac{\partial u_1}{\partial \nu_1}(x_0)>0, 
\quad \frac{\partial u_2}{\partial \nu_2}(x_0)>0, \text{ for any } x_0\in \Gamma\cap B_s(0), 
\end{equation}
and
\begin{equation*}
u(x)= \frac{\partial u_1}{\partial \nu_1}(x_0) <x-x_0, {\bf \nu}>^+ - \,\frac{\partial u_2}{\partial \nu_2}(x_0) <x-x_0,{\bf \nu}>^-+o(|x-x_0|).
\end{equation*}
Hence each $x_0\in \Gamma\cap B_s(0)$ is a regular point of $u$. Actually, again from Theorem \ref{alpha=betathm}, we have that $\frac{\partial u_1}{\partial \nu_1}(x_0)=\frac{\partial u_2}{\partial \nu_2}(x_0)$. We have proven that the set of regular points is an open set of $\Gamma$, locally of class $C^{1,\alpha}$ and this concludes the proof of the theorem.
\section{Appendix}

\begin{lem}\label{fondamentalsollem}
Assume $r,\gamma,c>0$,  and let $$\psi(x)=c\left(\frac{r^\gamma}{|x|^\gamma}-1\right),\quad x\neq 0.$$
Then, the following holds.

\begin{itemize}
\item[i)] $\psi(x)>0$ if $|x|<r$, $\psi(x)=0$ if $|x|=r$, $\psi(x)<0$ if $|x|>r$.
\item[ii)] If $\nu$ is the interior normal unit vector  of $B_r(0)$, then

$$\nabla\psi(x)=\frac{c\gamma}{r}\nu\quad\text{for any }x\in\partial B_r(0).$$
\item[iii)] For any $x\in B_r(0)$,
$$\psi(x)\geq \frac{c\gamma}{r}(r-|x|).$$
\item[iv)] If $\gamma=\frac{\Lambda(n-1)-\lambda}{\lambda}$, then $\ppuccin(\psi)(x)=0$ for all $x\neq 0$.
\end{itemize}
\end{lem}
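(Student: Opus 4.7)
The function $\psi$ is radial, so the plan is to reduce everything to one-variable calculus in $s = |x|$, writing $\psi(x) = f(s)$ with $f(s) = c(r^\gamma s^{-\gamma} - 1)$. Part (i) is immediate from the monotonicity of $s \mapsto r^\gamma s^{-\gamma}$ across $s = r$. For (ii), I compute $\nabla\psi(x) = -c\gamma r^\gamma |x|^{-\gamma-2} x$ by the chain rule; evaluating at $|x| = r$ gives $\nabla\psi(x) = -\tfrac{c\gamma}{r}\tfrac{x}{|x|}$, and since $\nu = -x/|x|$ is the inward normal to $\partial B_r(0)$, this equals $\tfrac{c\gamma}{r}\nu$.

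For (iii), the cleanest argument is convexity of $f$ on $(0,\infty)$: since $f''(s) = c\gamma(\gamma+1)r^\gamma s^{-\gamma-2} > 0$, the graph of $f$ lies above its tangent line at $s = r$. Computing $f(r) = 0$ and $f'(r) = -c\gamma/r$, this tangent is $s \mapsto \tfrac{c\gamma}{r}(r - s)$, and evaluating at $s = |x|$ yields the desired lower bound $\psi(x) \geq \tfrac{c\gamma}{r}(r - |x|)$ throughout $B_r(0)$ (it is trivially valid for $|x| \geq r$ as well, but that case is not needed).

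The main computation is (iv). For a radial $C^2$ function $\psi(x) = f(|x|)$ away from the origin, the Hessian $D^2\psi(x)$ has the standard eigenstructure: one eigenvalue equal to $f''(|x|)$ with eigenvector $x/|x|$, and the eigenvalue $f'(|x|)/|x|$ with multiplicity $n-1$ in the tangential directions. Plugging in, I get a single positive eigenvalue $c\gamma(\gamma+1)r^\gamma|x|^{-\gamma-2}$ and $n-1$ negative eigenvalues each equal to $-c\gamma r^\gamma|x|^{-\gamma-2}$. By the definition of $\ppuccin$,
\begin{equation*}
\ppuccin(\psi)(x) = c\gamma r^\gamma |x|^{-\gamma-2}\bigl[\lambda(\gamma+1) - \Lambda(n-1)\bigr],
\end{equation*}
which vanishes exactly when $\gamma + 1 = \Lambda(n-1)/\lambda$, i.e. $\gamma = \tfrac{\Lambda(n-1) - \lambda}{\lambda}$. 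This is the only nontrivial step, and it is essentially a one-line computation once the spectrum of the Hessian of a radial function is recorded; no serious obstacle is expected anywhere in the proof.
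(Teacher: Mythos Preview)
Your proof is correct and follows essentially the same approach as the paper: both reduce to the radial profile $f(s)=c(r^\gamma s^{-\gamma}-1)$, use convexity of $f$ for (iii), and compute the eigenvalues of $D^2\psi$ to evaluate $\ppuccin(\psi)$ for (iv). The only cosmetic difference is that the paper writes the Hessian explicitly as $\tfrac{c\gamma r^\gamma}{|x|^{\gamma+2}}\bigl((\gamma+2)\tfrac{x\otimes x}{|x|^2}-I_n\bigr)$ and reads off the eigenvalues of $(\gamma+2)\tfrac{x\otimes x}{|x|^2}-I_n$, whereas you invoke the standard radial-Hessian spectrum $\{f''(|x|),\,f'(|x|)/|x|\}$ directly; the two computations are equivalent.
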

\begin{proof}
Property (i) is immediate. 

To prove (ii)-(iv), let us compute the gradient and the Hessian matrix of $\psi$. We get, for $x\neq0$,
$$\nabla\psi(x)=-c\gamma r^\gamma\frac{x}{|x|^{\gamma+2}},$$
and 
$$D^2\psi(x)=\frac{c\gamma r^\gamma}{|x|^{\gamma+2}}\left((\gamma+2)\frac{x\otimes x}{|x|^2}-I_n\right),$$
where $I_n$ is the $n\times n$ identity matrix. 

In particular, if  $|x|=r$ and  $\nu=-\frac{x}{r}$ is the interior normal unit vector of $ B_r(0)$ at $x$, then we see that 
$$\nabla\psi(x)=-\frac{c\gamma}{r}\frac{x}{r}=\frac{c\gamma}{r}\nu,$$
which proves (ii).

To prove (iii), let us denote $\rho=|x|$ and let $\psi(\rho)=c\left(\frac{r^\gamma}{\rho^\gamma}-1\right)$. Then using that $\psi'(r)=-\frac{c\gamma}{r}$ and that $\psi''(\rho)\ge 0$,
we get
$$\psi(\rho)\ge \frac{c\gamma}{r}(r-\rho),$$ which gives (iii). 

Next, it is easy to see that, given any $n\times n$-matrix  $A$ with eigenvalues  $\lambda_1,\ldots,\lambda_n$, then the eigenvalues of $A-I_n$ are 
$\lambda_1-1,\ldots,\lambda_n-1$. Therefore, since the eigenvalues of $\frac{x\otimes x}{|x|^2}$ are $\lambda_1=\ldots=\lambda_{n-1}=0$ and $\lambda_n=1$, we infer that $(\gamma+2)\frac{x\otimes x}{|x|^2}-I_n$ has  $(n-1)$ negative eigenvalues equal to $-1$ and one positive eigenvalue equal to $(\gamma +1)$.
In particular $$\ppuccin(\psi)=\frac{c\gamma r^\gamma}{|x|^{\gamma+2}}\left[\lambda (\gamma+1)-\Lambda(n-1)\right].$$ 
Property  (iv)   then follows.

\end{proof}

\begin{lem}\label{fondamentalsollemF}
 Let $\phi$ be the solution of
\begin{equation}\label{phiappendixlem}
\begin{cases}
\Fn(\phi)=0&\text{in }B_r(0)\setminus B_\frac{r}{2}(0)\\
\phi=1&\text{on }\partial B_\frac{r}{2}(0)\\
\phi=0&\text{on }\partial B_r(0).
\end{cases}
\end{equation}
 Then, $\phi=\phi(|x|)$ is a radial function and there exists a constant $\sigma>0$ independent of $r$ such that for $x\in B_r(0)\setminus B_\frac{r}{2}(0)$ and $y_0\in\partial B_r(0)$, 
\begin{equation*} \phi(x)=\frac{\sigma}{r}<x-y_0,\nu>+o(|x-y_0|),
\end{equation*}
where $\nu$ is the interior normal unit vector  of $B_r(0)$ at $y_0$.
\end{lem}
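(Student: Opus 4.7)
\textbf{Proof plan for Lemma \ref{fondamentalsollemF}.}

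My plan is to reduce everything to a single universal radial function on the unit annulus. First, I will prove that $\phi$ is radial. By Proposition \ref{orthogonalF}, the operator $\Fn$ commutes with rotations, and both the domain $B_r(0)\setminus B_{r/2}(0)$ and the boundary data in \eqref{phiappendixlem} are invariant under every orthogonal transformation. Given any $O\in\mathcal{O}(n)$, the function $\phi_O(x):=\phi(Ox)$ is therefore another viscosity solution of \eqref{phiappendixlem}; by the comparison principle for $\Fn$ (Remark \ref{FpFnremark}), such a solution is unique, so $\phi_O=\phi$, which means $\phi(x)=g(|x|)$ for some $g:[r/2,r]\to\R$.

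Next I will rescale. Setting $\tilde\phi(y):=\phi(ry)$ for $y\in B_1(0)\setminus B_{1/2}(0)$, we have $D^2\tilde\phi(y)=r^{2}D^2\phi(ry)$, so by the $1$-homogeneity of $\Fn$ (Proposition \ref{Fnproperties}(a)) the function $\tilde\phi$ solves
\begin{equation*}
\Fn(\tilde\phi)=0\ \text{in }B_1(0)\setminus B_{1/2}(0),\qquad \tilde\phi=1\ \text{on }\partial B_{1/2}(0),\qquad \tilde\phi=0\ \text{on }\partial B_1(0).
\end{equation*}
By the same uniqueness argument as above, $\tilde\phi$ is a universal radial function, $\tilde\phi(y)=\tilde g(|y|)$, independent of $r$. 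Writing $y_0\in\partial B_r(0)$ as $r\hat y_0$ with $\hat y_0\in\partial B_1(0)$, and noting that the interior unit normal $\nu$ to $B_r(0)$ at $y_0$ coincides with the interior unit normal to $B_1(0)$ at $\hat y_0$, a change of variables reduces the desired expansion to showing that for some universal $\sigma>0$
\begin{equation*}
\tilde\phi(y)=\sigma\,\langle y-\hat y_0,\nu\rangle+o(|y-\hat y_0|)\quad\text{as }y\to\hat y_0.
\end{equation*}

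The main step is to produce this $C^1$-type expansion with $\sigma>0$. Here I will combine two ingredients. First, by the strong maximum principle $\tilde g>0$ on $[1/2,1)$, and since $\Fn$ is concave and uniformly elliptic and the data are constant on smooth spheres, classical boundary regularity for concave fully nonlinear equations (Remark \ref{FpFnremark} plus standard boundary $C^{1,\alpha}$ estimates) gives that $\tilde\phi$ is $C^{1,\alpha}$ up to $\partial B_1(0)$; equivalently $\tilde g\in C^1$ at $\rho=1$ and the Taylor expansion above holds with $\sigma=-\tilde g'(1)\ge 0$. Second, I will use the explicit Pucci fundamental solution of Lemma \ref{fondamentalsollem} as a barrier to prove $\sigma>0$. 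Namely, for $c=1/(2^{\gamma}-1)$ with $\gamma=(\Lambda(n-1)-\lambda)/\lambda$, the function $\psi(y)=c(|y|^{-\gamma}-1)$ satisfies $\ppuccin(\psi)=0$, hence $\Fn(\psi)\ge\ppuccin(\psi)=0$ by Proposition \ref{Fnproperties}(d), so $\psi$ is a viscosity subsolution of $\Fn=0$ in $B_1(0)\setminus B_{1/2}(0)$ matching the boundary data of $\tilde\phi$; the comparison principle gives $\psi\le\tilde\phi$ in the annulus, while $\psi(\hat y_0)=\tilde\phi(\hat y_0)=0$. By Lemma \ref{fondamentalsollem}(ii), $\nabla\psi(\hat y_0)=c\gamma\,\nu$, so $\sigma\ge c\gamma>0$.

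Finally, I will translate the expansion back. Writing $\phi(x)=\tilde\phi(x/r)$ and using the expansion of $\tilde\phi$ at $\hat y_0=y_0/r$ yields
\begin{equation*}
\phi(x)=\sigma\bigl\langle \tfrac{x}{r}-\tfrac{y_0}{r},\nu\bigr\rangle+o(|x-y_0|/r)=\frac{\sigma}{r}\langle x-y_0,\nu\rangle+o(|x-y_0|),
\end{equation*}
with $\sigma$ depending only on $n$, $\lambda$ and $\Lambda$ (through $\tilde\phi$), hence independent of $r$. The delicate point that I expect to require the most care is the appeal to $C^{1,\alpha}$ regularity up to the boundary: if one prefers to avoid that general theory, one can instead note that by rotational invariance of $\Fn$ the PDE for the radial profile $\tilde g$ reduces to a fully nonlinear ODE $F(\tilde g''(\rho),\tilde g'(\rho)/\rho)=0$ with $F_a\in[\lambda,\Lambda]$, so the implicit function theorem yields $\tilde g\in C^2([1/2,1])$ directly and the Hopf-type lower bound $\sigma\ge c\gamma>0$ above completes the argument.
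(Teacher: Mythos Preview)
Your proposal is correct and follows essentially the same approach as the paper: both arguments use rotational invariance of $\Fn$ plus uniqueness to get radiality, rescale to the unit annulus, invoke boundary regularity for concave fully nonlinear equations to obtain the linear expansion, and use the explicit Pucci barrier $\psi$ of Lemma \ref{fondamentalsollem} from below to force $\sigma>0$. The only minor difference is that the paper also brackets $\tilde\phi$ from above by the harmonic barrier $\psi_2$ to record the explicit two-sided bound $\gamma/(2^\gamma-1)\le\sigma\le(n-2)/(2^{n-2}-1)$, which you omit but is not needed for the statement of the lemma.
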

\begin{proof}
Let $\varphi$ be the solution of \eqref{phiappendixlem} with $r=1$. Then, since $\Fn$ is a concave operator, we have that $\varphi\in C^{2,\alpha}(\overline{B_1(0)}\setminus B_\frac{r}{2}(0))$, see \cite{caffarelli_cabre_1995}. Let $O$ be any orthogonal matrix and let $v(x):=\varphi(Ox)$. By Proposition \ref{orthogonalF}, $\Fn$ is invariant under rotations, thus $v$ is solution of 
\eqref{phiappendixlem}  and by uniqueness, $\varphi(Ox)=\varphi(x)$.  Since the latter equality holds true for any orthogonal matrix  $O$, we infer that $\varphi$ is a radial function, $\varphi=\varphi(|x|)$. 

Let $\psi_1(x):=1/(2^\gamma-1)\left(\frac{1}{|x|^\gamma}-1\right)$ where $\gamma=\frac{\Lambda(n-1)-\lambda}{\lambda}$, and let
$\psi_2$ be the harmonic function solution of 
\begin{equation*}
\begin{cases}
\Delta \psi_2=0&\text{in }B_1(0)\setminus B_\frac{1}{2}(0)\\
\psi_2=1&\text{on }\partial B_\frac{1}{2}(0)\\
\psi_2=0&\text{on }\partial B_1(0),
\end{cases}
\end{equation*}
 i.e., for $n>2$, $\psi_2(x)=1/(2^{n-2}-1)\left(\frac{1}{|x|^{n-2}}-1\right).$ 
Then by Lemma \ref{fondamentalsollem} and the comparison principle, 
for $x\in B_1(0)\setminus B_\frac{1}{2}(0)$, 
$$\psi_1(x)\le \varphi(x)\le \psi_2(x)$$
and thus  there exists $\sigma, $ $\gamma/(2^\gamma-1)\le\sigma \le (n-2)/(2^{n-2}-1)$,  such that  if  $y_0\in\partial B_1(0)$,
$$\varphi(x)=\sigma <x-y_0,\nu>+o(|x-y_0|).$$
The lemma is proven by noticing that $\phi(x)=\varphi(x/r)$ is the solution of \eqref{phiappendixlem}.
\end{proof}

\bibliography{CPQT}
\bibliographystyle{plain}

\end{document}